\documentclass[a4paper,11pt]{article}

\usepackage{amsmath,amsfonts,amssymb,amsthm}
\usepackage{times}
\usepackage{graphicx}
\usepackage{xcolor}
\usepackage{hyperref}
\hypersetup{
    colorlinks,
    linkcolor={red!50!black},
    citecolor={blue!50!black},
    urlcolor={blue!80!black}
}

\newcommand{\ind}{{\bf 1}}
\newcommand{\bZ}{\mathbb{Z}}
\newcommand{\bC}{\mathbb{C}}
\newcommand{\bN}{\mathbb{N}}

\newcommand{\bP}{\mathbb{P}}
\newcommand{\bE}{\mathbb{E}}
\newcommand{\bR}{\mathbb{R}}

\newtheorem{theorem}{Theorem}

\newtheorem{definition}{Definition}
\newtheorem{example}[theorem]{Example}\rm
\newtheorem{exmpl}[theorem]{Example} \rm

\newtheorem{lemma}[theorem]{Lemma}

\newtheorem{proposition}[theorem]{Proposition}
\newtheorem{remark}{Remark}\rm

\begin{document}

\title{\bf Generalized Coverage Processes  with Infinitely Divisible Finite Dimensional
Distributions}
\author{George Makatis and Michael A. Zazanis\thanks{Corresponding author.
 zazanis@aueb.gr} \\
Department of Statistics, \\
Athens University of Economics and Business,\\
Athens, Greece}
\date{}
\maketitle

\begin{abstract}
In this paper we define a class of coverage processes with
infinitely divisible finite dimensional distributions and a particular type of correlation
structure that can be thought of as generalizations of the classical
Ornstein--Uhlenbeck process and which include coverage processes
such as the $M/GI/\infty$ process.
We show how such processes arise naturally as limits of superpositions
of independent ON/OFF Markov processes with different parameters by
formulating an appropriate limit theorem. Various examples of processes of this
type are given.

\vspace{0.2in} \noindent {\small \textsc{Keywords:}}
\begin{minipage}[t]{4.5in}{\small \sc Superposition of ON/OFF processes,
Multivariate Probability Laws,
Ornstein--Uhlenbeck Process,
Coverage process, $M/GI/\infty$ process}
\end{minipage}
\end{abstract}

\section{Introduction}

We define a new family of processes which we term Generalized Coverage Processes.
These are stationary, infinitely divisible real valued processes defined on $\bR$ with
a special correlation structure similar to that of the classical coverage processes
such as the $M/G/\infty$ process (see Hall \cite{Hall}). Stationary infinitely divisible
processes have been examined by Maruyama \cite{Maruyama}. See also Lee \cite{Lee}
and Banrdorff-Nielsen \cite{BN} who examines stationary infinitely divisible
processes closer in spirit to those examined in this paper.

The classical Ornstein--Uhlenbeck process is the solution of the stochastic
differential equation
\begin{equation}  \label{OU0}
dX_t = -aX_t dt + dW_t
\end{equation}
where $\{W_t;t\geq 0\}$ is standard Brownian motion and $X_0$ a Gaussian
random variable independent of $\{W_t;t\geq 0\}$. If $a>0$ then the process
has a stationary version $\{X_t;t\geq 0\}$ when $X_0$ is Gaussian with zero
mean and variance equal to $\frac{1}{2a}$.

One particularly fruitful generalization of the classical
Ornstein--Uhlenbeck process consists in replacing the driving Brownian
Motion term in (\ref{OU0}) by a general L\'evy process.
The study of such processes and their applications in many areas, most
notably mathematical finance, has attracted a great deal of attention over
the last three decades. We refer the reader to Sato \cite{Sato} for background
on these Generalized Ornstein--Uhlenbeck (GOU) processes and to Barndorff--Nielsen
and Shephard \cite{BNS} for applications in finance and economics. We note
in particular that the class of GOU processes is intimately related to self--decomposable
distributions (see \cite{wolfe} and \cite{Jurek-Vervaat}).
In this paper we propose an alternative generalization of the stationary, Wiener process
driven, Ornstein--Uhlenbeck process which is intimately connected to coverage processes
and limits of superpositions of ON/OFF processes.

\section{The covariance structure of the classical Ornstein-Uhlenbeck process and an
algebraic identity}
The stationary version of the process (\ref{OU0}) is a zero mean Gaussian process
$\{X_t\}$ and the joint characteristic function of $(X_{t_1},X_{t_2},\ldots,X_{t_n})$ for
$t_1<t_2<\cdots <t_n$ is given by
\begin{equation}  \label{OU1}
\bE  \exp \left( i\sum_{i=1}^n \theta_i X_{t_i}\right) =\exp \left( -\frac{1}{4a}
\sum_{i=1}^n \theta_i^2\;-\;\frac{1}{2a} \sum_{1\leq i<j\leq n}\theta_i\theta_j e^{-a(t_j-t_i)}\right).
\end{equation}
(If we choose $a=1/2$ then the marginal distribution of the process is standard normal. Also,
here and throughout the paper the letter $i$ does double duty, both as the imaginary unit and
an index in the set of natural numbers, since no confusion is possible.)
The key observation, and point of departure for the generalization we have in mind, is to rewrite
the right hand side of (\ref{OU1}) in the form
\begin{eqnarray}  \label{key}
&&\exp \Bigl(-\frac{1}{4a}\sum_{1\leq i\leq j\leq n}\left( \theta_i + \cdots
+ \theta_j \right)^2 \left[ \left(1-e^{-a(t_j-t_{i-1})}\right) -\left(
1-e^{-a(t_j-t_i)}\right) \right.   \\
&&\hspace{2in}\left. -\left( 1-e^{-a(t_{j+1}-t_{i-1})}\right)
+\left(1-e^{-a(t_{j+1}-t_i)}\right) \right] \Bigr).  \notag
\end{eqnarray}
\textbf{In the above expression, {\em for any fixed $n$,} $t_0$ is to be interpreted as $-\infty $
whereas $t_{n+1}$ as $+\infty $.} This convention will be crucial in simplifying the notation and will
be used throughout the paper.
Also, note that the expression inside the square brackets can be rewritten, after elementary
manipulations, in the simpler form
\begin{equation}   \label{corelstructure}
\left(1-e^{-\mu (t_i-t_{i-1})}\right) \,e^{-\mu(t_j-t_i)}\, \left(1-e^{-\mu (t_{j+1}-t_j)}\right).
\end{equation}
The equality of the expressions (\ref{OU1}) and (\ref{key}) above 
is established in the following
\begin{proposition} \label{prop:key} Expression (\ref{key}) is equal to the
right hand side of (\ref{OU1}).
\end{proposition}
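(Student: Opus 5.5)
We will compare the two exponents coefficient by coefficient, viewing each as a quadratic form in $(\theta_1,\ldots,\theta_n)$. Write $c_{ij}$ for the bracketed expression in (\ref{key}). Setting $f(s)=1-e^{-as}$, we have
\[
c_{ij}=f(t_j-t_{i-1})-f(t_j-t_i)-f(t_{j+1}-t_{i-1})+f(t_{j+1}-t_i),
\]
with the conventions $t_0=-\infty$, $t_{n+1}=+\infty$, so that $f(+\infty)=1$. The key structural fact is that $c_{ij}$ is a second-order mixed difference. Summing it over a rectangle of indices telescopes in both $i$ and $j$.

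First we expand $(\theta_i+\cdots+\theta_j)^2=\sum_{i\le k,l\le j}\theta_k\theta_l$ and interchange the sums. The coefficient of $\theta_k\theta_l$, for $k\le l$, is then $\sum_{i\le k}\sum_{j\ge l}c_{ij}$, counted once for $k=l$ and twice for $k<l$. Next we evaluate this double sum by telescoping. Summing over $j=l,\ldots,n$ collapses the terms to
\[
f(t_l-t_{i-1})-f(t_l-t_i)-f(\infty)+f(\infty)=f(t_l-t_{i-1})-f(t_l-t_i).
\]
Summing this over $i=1,\ldots,k$ collapses it to $f(t_l-t_0)-f(t_l-t_k)=1-f(t_l-t_k)=e^{-a(t_l-t_k)}$.

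This gives coefficient $1$ for $\theta_k^2$ and $2e^{-a(t_l-t_k)}$ for $\theta_k\theta_l$ with $k<l$. Multiplying by $-\frac{1}{4a}$ yields exactly the exponent in (\ref{OU1}).

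The only delicate point is the boundary terms. When $i=1$ or $j=n$, the bracket contains $f(\pm\infty)$-type terms, and we must check that they equal $1$, or cancel pairwise, as claimed. For instance, with $j=n$ the two terms involving $t_{n+1}$ are both $f(+\infty)=1$ and cancel. Before telescoping, we would verify these cases directly. As a side check, the alternative product form (\ref{corelstructure}) can be used to confirm the telescoping on small $n$, such as $n=1,2$.
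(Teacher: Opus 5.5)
Your proof is correct and follows essentially the paper's route. The paper quotes the correspondence $b_{kl}=\sum_{i\le k\le l\le j}a_{ij}$ and recovers $a_{ij}$ as the mixed second difference of $b_{ij}=e^{-a(t_j-t_i)}$ (with $b_{0,j}=b_{i,n+1}=0$), while you verify that same correspondence directly by telescoping in $j$ and then $i$, using $f(+\infty)=1$ exactly as the convention $t_0=-\infty$, $t_{n+1}=+\infty$ requires.
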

\begin{proof}
To show this we start with the following observation.
Let $\boldsymbol{\theta}^\top:=(\theta_1, \ldots , \theta_n)$ and let $B=[b_{ij}]$,
$i,j=1,\ldots,n$ be a symmetric $n\times n$ matrix. The quadratic form
$\boldsymbol{\theta}^\top B \boldsymbol{\theta}$ can always be written uniquely in the form
\begin{equation}  \label{quadratic}
\sum_{i=1}^n \sum_{j=1}^n b_{ij} \theta_i \theta_j
= \sum_{1\leq i\leq j\leq n}(\theta_i + \cdots + \theta_j)^2 a_{ij}
\end{equation}
where the $a_{ij}$ and $b_{kl}$ are uniquely determined from each other by
means of the relationships
\begin{equation}  \label{ba}
b_{kl}=\sum_{1\leq i\leq k\leq l\leq j\leq n}a_{ij},\;\;\; \text{ for }k\leq l
\end{equation}
and
\begin{equation}  \label{ab}
a_{ij} = b_{ij}-b_{i,j+1}-b_{i-1,j}+b_{i-1,j+1}, \hspace{0.1in} \text{ for }
1 \leq i \leq j \leq n,
\end{equation}
where in the last relationship we take $b_{0,j}= b_{i,n+1}=0$ for all $i,j$,
in $\{1,\ldots,n\}$.

Now use the identity (\ref{quadratic}) with $b_{ij} = e^{-a(t_j -t_i)}$, $1
\leq i \leq j \leq n$, and
\begin{eqnarray*}
a_{ij} &=& e^{-a(t_j-t_i)} - e^{-a(t_{j+1}-t_i)} - e^{-a(t_j-t_{i-1})} +
e^{-a(t_{j+1}-t_{i-1})} \\
&=& \left(1-e^{-a(t_j-t_{i-1})}\right) -\left( 1-e^{-a(t_j-t_i)}\right)
-\left( 1-e^{-a(t_{j+1}-t_{i-1})}\right)  +\left(1-e^{-a(t_{j+1}-t_i)}\right) .
\end{eqnarray*}
\end{proof}
Suppose now that $\phi(\theta)$ is the characteristic function of an infinitely divisible
probability law on $\bR$ and denote by $\psi(\theta):=\log \phi(\theta)$ its {\em characteristic
exponent} which has the form
\begin{equation} \label{psi_def}
 \psi(\theta)  = i \beta \theta - \frac{1}{2} \sigma^2 \theta^2
 + \int_{\bR\setminus \{0\}}
 \left(e^{i\theta x} - 1 -i\theta x \ind(|x|<1) \, \right) \nu(dx).
\end{equation}
Here $(\beta,\sigma, \nu)$ is the L\'evy triplet of the infinitely divisible law.
In particular the L\'evy measure $\nu$ is a $\sigma$-finite measure on $\bR\setminus\{0\}$
which satisfies the condition $\int_{0<|x|<1}x^2 \nu(dx) < \infty$
(see \cite{Applebaum}, \cite{Sato}).
In this paper we propose to replace the characteristic exponent $-\frac{1}{4a}\theta^2$
in the characteristic function in (\ref{key}) by the characteristic exponent $\psi(\theta)$
of a general infinitely divisible distribution (\ref{psi_def}),
and the exponential distribution function $1- e^{-at}$ which appears in (\ref{key}) by
a general {\em concave} distribution function $H$ on $\mathbb{R}^+$ as shown in
the sequel. In the following sections we will define this class of processes and we will
discuss some of its members.

\section{Infinitely divisible generalized coverage processes with a generalized
OU covariance structure}

In this section we define the class of {\em Generalized Coverage Processes
with Infinitely Divisible Finite Dimensional Distributions} (GCID process).
This class includes the $M/GI/\infty$ process, which may be thought of as
a process of intervals of random length whose left end points constitute a
Poisson process on the real line and in this sense a coverage process on the real
line, and the classic Ornstein-Uhlenbeck process which may arise also as a limit
of an array of coverage processes. This motivates the terminology.

Suppose that $f:[0,\infty )\rightarrow \mathbb{R}$ is {\em concave}, i.e.\
for all $x,y \in [0,\infty)$ and $\lambda \in [0,1]$
\[
f(\lambda x + (1-\lambda)y) \; \geq \; \lambda f(x) + (1-\lambda) f(y).
\]
Concave functions have the {\em decreasing increments} property, namely
\begin{equation} \label{decreasing_increments}
f(x+h) - f(x) \;\geq \; f(y+h) - f(y) \;\;\; \mbox{ for } 0\leq x \leq y, \;\;\;
h>0
\end{equation}
(see for instance \cite{Niculescu}). As a consequence of this property we can see
that the concave function $f$ is {\em supermodular} i.e.\ if
$0\leq x_1 <x_2 <x_3 <x_4$ the following inequality holds
\begin{equation}  \label{supermodular}
f(x_3 -x_1 )-f(x_3 -x_2 )-f(x_4-x_1 )+f(x_4-x_2 ) \;\geq \; 0.
\end{equation}
Indeed, to see that (\ref{decreasing_increments}) implies (\ref{supermodular})
it suffices to take $x=x_3-x_2$, $y=x_3-x_1$ and $h= x_4-x_3$.

The following Lemma will play a central role in the sequel.
\begin{lemma}    \label{le:multi_dimensional}
Assuming that $\phi(\theta)$ is the characteristic function of an infinitely divisible law on $\bR$
with characteristic exponent given by (\ref{psi_def}), for any $n \in \bN$ and nonnegative reals
$a_{ij}$, $1\leq i \leq j \leq n$, the function
\begin{equation} \label{phi_lemma2}
\phi_n(\theta_1,\ldots,\theta_n) :=
\exp \left( \sum_{1 \leq i \leq j \leq n} a_{ij} \psi(\theta_i+\cdots+\theta_j) \right)
\end{equation}
is the characteristic function of a probability measure on $\bR^n$. Furthermore, $\phi_n$ is
infinitely divisible and its L\'evy triplet,
$(\boldsymbol{\beta},\boldsymbol{\Sigma},\boldsymbol{\nu}_n)$, is
\begin{equation} \label{beta}
\boldsymbol{\beta}=(\boldsymbol{\beta}_1,\ldots,\boldsymbol{\beta}_n) \hspace{0.1in}
\mbox{ with } \hspace{0.1in}
\boldsymbol{\beta}_k = \beta\sum_{i=1}^k \sum_{j=k}^n a_{ij}, \;\; k=1,\ldots,n,
\end{equation}
\begin{equation} \label{CapSigma}
\boldsymbol{\Sigma}_{kl} :=   \sigma^2 \hspace{-0.1in}
\sum_{{1\leq i \leq (k\wedge l) }\atop { (k\vee l) \leq j \leq n}}
\hspace{-0.1in}  a_{ij},
\;\;\; k,l =1,\ldots,n.
\end{equation}
The L\'evy measure $\boldsymbol{\nu}_n$ on $\bR^n$ corresponding to $\phi_n$ can
be characterized as follows: Let $\mathbf{u}_{ij}$, $1\leq i \leq j \leq n$, be the collection
of vectors in $\bR^n$ such that $\mathbf{u}_{ij}=(0,\ldots,0,1,\ldots,1, 0,\ldots,0)^\top$
(i.e.\ its $k$th component is 1 if $i \leq k \leq j$ and 0 otherwise). Then, for any $0<r < R$
and any bounded, continuous function $\varphi:\bR \rightarrow \bR^n$ vanishing outside
of the set $\{\boldsymbol{x} :r<\Vert\boldsymbol{x}\Vert < R\}$,
\begin{equation} \label{nu}
\int_{\bR^n} \varphi(\boldsymbol{x}) \boldsymbol{\nu}_n(d\boldsymbol{x}) \;=\;
\sum_{1 \leq i \leq j \leq n} a_{ij} \int_{\bR} \varphi(s\mathbf{u}_{ij}) \nu(ds).
\end{equation}
\end{lemma}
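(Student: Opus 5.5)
The idea is to realize $\phi_n$ as the characteristic function of an explicit random vector. Let $\{Y_{ij}\}_{1\le i\le j\le n}$ be independent real random variables, where $Y_{ij}$ has characteristic function $e^{a_{ij}\psi(\theta)}$. This is legitimate because $\phi$ is infinitely divisible, so $\phi^{a}=e^{a\psi}$ is a characteristic function for every $a\ge 0$; when $a_{ij}=0$ we take $Y_{ij}\equiv 0$. Define
\[
\boldsymbol{X} := \sum_{1\le i\le j\le n} Y_{ij}\,\mathbf{u}_{ij},
\qquad\text{so that}\qquad X_k=\sum_{i\le k\le j} Y_{ij}.
\]
Since $\boldsymbol{\theta}^\top\mathbf{u}_{ij}=\theta_i+\cdots+\theta_j$, independence gives
$\bE e^{i\boldsymbol{\theta}^\top\boldsymbol{X}}=\prod_{i\le j}\bE e^{i(\theta_i+\cdots+\theta_j)Y_{ij}}=\phi_n(\boldsymbol{\theta})$.
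Hence $\phi_n$ is a characteristic function. Infinite divisibility follows at once: $\phi_n^{1/m}$ has the same form with $a_{ij}$ replaced by $a_{ij}/m$, so it is again a characteristic function by the same construction.

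To identify the L\'evy triplet, I substitute (\ref{psi_def}) with argument $\boldsymbol{\theta}^\top\mathbf{u}_{ij}$ into the exponent of $\phi_n$ and collect terms.

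The drift part is $i\beta\sum_{i\le j}a_{ij}\,\boldsymbol{\theta}^\top\mathbf{u}_{ij}$. Its coefficient of $\theta_k$ is $\beta\sum_{i\le k\le j}a_{ij}$, which is (\ref{beta}).

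The Gaussian part is $-\frac12\sigma^2\sum_{i\le j} a_{ij}(\boldsymbol{\theta}^\top\mathbf{u}_{ij})^2=-\frac12\boldsymbol{\theta}^\top\boldsymbol{\Sigma}\boldsymbol{\theta}$ with $\boldsymbol{\Sigma}=\sigma^2\sum_{i\le j}a_{ij}\mathbf{u}_{ij}\mathbf{u}_{ij}^\top$. Its $(k,l)$ entry is $\sigma^2\sum_{i\le k\wedge l,\ j\ge k\vee l}a_{ij}$, which is (\ref{CapSigma}). The identity (\ref{quadratic}) from the proof of Proposition \ref{prop:key} is exactly this computation.

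The jump part is $\sum_{i\le j}a_{ij}\int(e^{is\boldsymbol{\theta}^\top\mathbf{u}_{ij}}-1-is\boldsymbol{\theta}^\top\mathbf{u}_{ij}\ind(|s|<1))\,\nu(ds)$. I define $\boldsymbol{\nu}_n$ as the sum of the push-forwards $a_{ij}\,\nu\circ(s\mapsto s\mathbf{u}_{ij})^{-1}$. Testing against $\varphi$ gives (\ref{nu}) by change of variables. Each push-forward charges no mass at $0$. It also satisfies $\int\min(1,\Vert\boldsymbol{x}\Vert^2)\,d\boldsymbol{\nu}_n<\infty$, because $\Vert s\mathbf{u}_{ij}\Vert^2=(j-i+1)s^2$.

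The main obstacle is the truncation convention. The $n$-dimensional L\'evy--Khintchine formula uses the compensator $i\boldsymbol{\theta}^\top\boldsymbol{x}\,\ind(\Vert\boldsymbol{x}\Vert<1)$, but here the compensator is $\ind(|s|<1)$ with $\Vert\boldsymbol{x}\Vert=\sqrt{j-i+1}\,|s|$. These differ on the set $1/\sqrt{j-i+1}\le|s|<1$, a region bounded away from $0$ and of finite $\nu$-mass. The mismatch therefore adds the finite correction
\[
\beta\text{-shift}=\sum_{i\le j}a_{ij}\,\mathbf{u}_{ij}\int s\bigl(\ind(|s|<1)-\ind(\sqrt{j-i+1}\,|s|<1)\bigr)\nu(ds)
\]
to the drift. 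This shifts the first component of the triplet away from (\ref{beta}). I would handle this by stating that (\ref{beta}) holds with respect to the truncation inherited from the one-dimensional representation, namely the function $\boldsymbol{x}\mapsto \ind(|s|<1)$ on each ray. Otherwise the drift must be corrected by this term. The L\'evy measure, characterized only through test functions supported away from the origin as in (\ref{nu}), and $\boldsymbol{\Sigma}$ are unaffected.

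Uniqueness of the L\'evy triplet for a fixed truncation function then completes the identification.
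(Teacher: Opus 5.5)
Your proof is correct and follows the paper's route. The paper likewise realizes $\phi_n$ as the characteristic function of $\sum_{i\le j} Z_{ij}(a_{ij})\,\mathbf{u}_{ij}$, where the $Z_{ij}$ are independent L\'evy processes with exponent $\psi$ (these play the role of your $Y_{ij}$); it gets infinite divisibility from the independent infinitely divisible summands and reads off $\boldsymbol{\beta}$, $\boldsymbol{\Sigma}$ (via (\ref{quadratic})) and $\boldsymbol{\nu}_n$ (concentrated on the lines $\{s\mathbf{u}_{ij}\}$) term by term.

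Your truncation caveat is a genuine refinement that the paper does not address. (\ref{beta}) holds exactly for the cutoff $\ind(\max_k|x_k|<1)$, since $\max_k |s\,(\mathbf{u}_{ij})_k| = |s|$. For the Euclidean cutoff, the drift is (\ref{beta}) \emph{minus} (not plus) the finite term you displayed.
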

\begin{proof}
Let $\{Z_{ij}(t); t \geq 0\}$, $1\leq i \leq j \leq n$, be independent L\'evy processes,
all with the same characteristic exponent $\psi(\theta)$ (and thus all with triplet $(\beta,\sigma^2,\nu)$).
Thus, for $\theta \in \bR$, $\bE e^{i \theta Z_{ij}(t)} =e^{ t\psi(\theta)}$, $t \geq 0$.
Define the random element $\mathbf{Y} =(Y_1,\ldots,Y_n)^\top$ of $\bR^n$ by
\begin{equation}  \label{Levy00}
\mathbf{Y} = \sum_{1\leq i \leq j \leq n}  Z_{ij}(a_{ij}) \, {\bf u}_{ij}.
\end{equation}
If ${\boldsymbol \theta} = \left(\theta_1,\ldots,
\theta_n\right)^\top$ then $\boldsymbol{\theta}^\top \mathbf{u}_{ij} = \theta_i+\cdots+\theta_j$
and in view of (\ref{Levy00}), $\mathbf{Y}$ has characteristic function
\begin{eqnarray}   \nonumber
\bE[ e^{i \boldsymbol{\theta}^\top \mathbf{Y} }] &=&
\bE\Big[ \exp\Big(i \sum_{1 \leq i \leq j \leq n}
\boldsymbol{\theta}^\top \mathbf{u}_{ij}  \,  Z_{ij}(a_{ij})  \Big)\Big]
\;=\; \bE\Big[ \exp\Big( i \sum_{1\leq i  \leq j \leq n} (\theta_i+\cdots+\theta_j) Z_{ij}(a_{ij})\Big) \Big]
 \\ \nonumber 
&=&  \prod_{1\leq i  \leq j \leq n} \bE e^{i  (\theta_i+\cdots+\theta_j) Z_{ij}(a_{ij})} \\ \label{Levy2}
&=& \prod_{1\leq i  \leq j \leq n} e^{a_{ij}\psi(\theta_i+\cdots+\theta_j)} \;=\;
\phi_n(\theta_1,\ldots,\theta_n)
\end{eqnarray}
where we have used the independence of the L\'evy processes
$\{Z_{ij}(t); t\geq 0\}$ and the fact that they all have the same characteristic exponent
$\psi(\theta)$. Equation (\ref{Levy2}) follows directly from (\ref{phi_lemma2}) and therefore
$\phi_n(\theta_1,\ldots,\theta_n)$ is
the characteristic function of $\mathbf{Y}$. Note also that $\mathbf{Y}$, as defined in
(\ref{Levy00}), is the sum of the
$n(n+1)/2$ independent, infinitely divisible random elements of $\bR^n$ of  (\ref{Levy00}), with
characteristic functions $\bE[e^{i \boldsymbol{\theta} \cdot \mathbf{u}_{ij} \, Z_{ij}(a_{ij})}]
= e^{\psi(\theta_i+\cdots+\theta_j) a_{ij}}$ and therefore $\mathbf{Y}$ is an infinitely
divisible random element of $\mathbb{R}^n$. The L\'evy triplet of $\phi_n$ can be obtained
from (\ref{psi_def}) with $\boldsymbol{\beta}=(\beta_1,\ldots,\beta_n)^\top$ given by
\[
\boldsymbol{\theta}^\top \boldsymbol{\beta} = \sum_{k=1}^n \theta_k \beta_k =
\sum_{1\leq i \leq j \leq n} a_{ij} \beta (\theta_i+\cdots + \theta_j) = \sum_{k=1}^n
\theta_k \beta \sum_{i=1}^k \sum_{j=k}^n a_{ij}
\]
thus establishing (\ref{beta}). Similarly
\[
\frac{1}{2} \boldsymbol{\theta}^\top \boldsymbol{\Sigma}\boldsymbol{\theta} =
\frac{1}{2} \sum_{i=1}^n \sum_{j=1}^n \Sigma_{ij}\theta_i \theta_j = \frac{1}{2}
\sum_{1\leq i \leq j \leq n} \sigma^2 a_{ij}(\theta_i+\ldots+\theta_j)^2.
\]
Then, from identiy (\ref{quadratic}) we obtain (\ref{CapSigma}).
Finally, the random variable
$\mathbf{u}_{ij} \, Z_{ij}(a_{ij})$ has L\'evy (and therefore probability) measure concentrated
on the one-dimensional linear subspace of $\mathbb{R}^n$
$V_{ij}:=\{\mathbf{x}: \mathbf{x} =  s\, \mathbf{u}_{ij} , \; s \in \mathbb{R} \}$.
This justifies (\ref{nu}) and completes the proof.
\end{proof}
\begin{theorem}[GCID]  \label{basic-theorem-H}
Suppose that $\phi (\theta )$ is the characteristic
function of an infinitely divisible law on $\bR$
and denote by $\psi(\theta):=\log \phi(\theta)$ its characteristic exponent given by
(\ref{psi_def}). Denote by $X$ a real random variable
with the corresponding distribution, i.e. $\phi (\theta )=\bE  e^{i\theta X}$.
Suppose also that $H$ is a probability distribution function on $\bR$
such that $H(0)=0$ and $H$ is concave. Then, for $n\in \mathbb{N}$ and
$\theta_i,t_i\in \mathbb{R}$, $i=1,2,\ldots ,n$, $t_1\leq t_2\leq \cdots
\leq t_n,$ the family of functions $\phi_n(\theta_1,\ldots ,\theta_n;
t_1,\ldots ,t_n)$ defined by
\begin{eqnarray}   \label{main}
&&\hspace{-0.2in} \log \phi_n(\theta_1,\ldots,\theta_n; t_1, \ldots, t_n)  \\
&&\hspace{0.3in} =\sum_{1\leq i\leq j\leq n}\psi (\theta_i+\cdots +\theta_j)\left[
H(t_j-t_{i-1})-H(t_j-t_i)-H(t_{j+1}-t_{i-1})+H(t_{j+1}-t_i)\right]  \notag
\end{eqnarray}
are characteristic functions of \emph{consistent, infinitely divisible} finite--dimensional
distributions. {\em For each fixed $n$ in the above expression $t_0$ is to be
interpreted as $-\infty $ and $t_{n+1}$ as $+\infty$.} The family of
characteristic functions defined in (\ref{main}) corresponds to the
finite--dimensional distributions of a stationary process $\{X_t;t\in
\mathbb{R}\}$ with marginal distribution $\bE  e^{i\theta X_t}=\phi (\theta )$. When
the random variable $X$ has finite second moment 
the process $\{X_t\}$ has covariance
\begin{equation}  \label{Cov}
\text{\textsf{Cov}}(X_t,X_{t+h})=-\psi^{\prime \prime }(0)\left[ 1-H(|h|) \right].
\end{equation}
\end{theorem}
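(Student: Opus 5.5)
The plan is to reduce everything to Lemma~\ref{le:multi_dimensional}, with
\[
a_{ij}=H(t_j-t_{i-1})-H(t_j-t_i)-H(t_{j+1}-t_{i-1})+H(t_{j+1}-t_i),
\]
using $H(+\infty)=1$ and $H(x)=0$ for $x\le 0$ (so $H(-\infty)=0$). First I check that $a_{ij}\ge 0$ for $1\le i\le j\le n$.

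When $2\le i\le j\le n-1$, the four points $t_{i-1}\le t_i\le t_j\le t_{j+1}$ are finite. In that case $a_{ij}\ge 0$ is exactly the supermodularity inequality (\ref{supermodular}); ties are handled by continuity or directly.

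The boundary cases are as follows.
\begin{itemize}
\item If $i=1<j<n$, then $t_0=-\infty$ and $a_{1j}=1-H(t_j-t_1)-1+H(t_{j+1}-t_1)$, which is $\ge 0$ by monotonicity of $H$.
\item The case $j=n>i>1$ is symmetric: $a_{in}=H(t_n-t_{i-1})-H(t_n-t_i)\ge 0$.
\item If $i=1$ and $j=n$, then $a_{1n}=1-H(t_n-t_1)\ge 0$.
\end{itemize}
One subtlety is $H(0)=0$ together with concavity. Concavity on $[0,\infty)$ forces $H$ to be continuous on $(0,\infty)$, although it may jump at $0^+$. I will take care that $H(t_j-t_i)$ with $t_i=t_j$ is interpreted as $0$; this is consistent with the decreasing-increments argument, since concavity only requires $H(\lambda x)\ge\lambda H(x)$, which holds. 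With $a_{ij}\ge0$, Lemma~\ref{le:multi_dimensional} immediately gives that $\phi_n$ is an infinitely divisible characteristic function on $\bR^n$.

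Next come consistency and stationarity. Stationarity is immediate, because the $a_{ij}$ depend only on differences of the $t$'s. For consistency (Kolmogorov), two things must be checked.
\begin{itemize}
\item \emph{Permutation consistency.} This is handled by defining the distribution for unordered times via sorting.
\item \emph{Marginalization.} Setting $\theta_k=0$ in $\phi_n$ must yield $\phi_{n-1}$ at the times with $t_k$ removed.
\end{itemize}
For marginalization, once $\theta_k=0$, the sums $\theta_i+\dots+\theta_j$ for $i\le k\le j$ no longer depend on the index $k$. I then regroup terms. Pairs $(i,j)$ with $j=k-1$ and $(i,k)$ merge into one term with the same $\psi$-argument. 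Likewise, $(k,j)$ and $(k+1,j)$ merge. The pair $(k,k)$ gives $\psi(0)=0$. In each merged pair the bracket coefficients telescope: the $H(t_k-\cdot)$ terms cancel and leave the coefficient built from neighbours $t_{k-1},t_{k+1}$.

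This telescoping bookkeeping, including the cases $k=1$ and $k=n$ where the $\pm\infty$ convention enters, is the main technical step. I would present it as the identity $a^{(n)}_{i,k-1}+a^{(n)}_{i,k}=a^{(n-1)}_{i,k-1}$ and the analogous one for the $i$-index, both verified directly from the definition. Kolmogorov's extension theorem then yields the process.

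Finally I compute the marginal and the covariance. For $n=1$ the only coefficient is $a_{11}=H(\infty)-H(-\infty)-\dots=1$, so the marginal law is $\phi$.

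For $n=2$ with $t_2-t_1=h\ge 0$, the coefficients are
\[
a_{11}=H(h),\qquad a_{22}=H(h),\qquad a_{12}=1-H(h).
\]
Thus
\[
\log\phi_2=H(h)\bigl[\psi(\theta_1)+\psi(\theta_2)\bigr]+(1-H(h))\,\psi(\theta_1+\theta_2).
\]
Under a finite second moment $\psi$ is twice differentiable at $0$. Differentiating $\log\phi_2$ in $\theta_1$ and then in $\theta_2$ at $0$ gives $-\mathsf{Cov}(X_{t_1},X_{t_2})=(1-H(h))\psi''(0)$, which is (\ref{Cov}); symmetry in $h$ gives the $|h|$ form.
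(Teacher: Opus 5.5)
Your proposal is correct and follows the paper's proof in all essentials. You show $a_{ij}\ge 0$ and apply Lemma~\ref{le:multi_dimensional}, prove consistency by setting $\theta_k=0$ and merging the two terms adjacent to the deleted index, and get stationarity from translation invariance. You are in fact more careful than the paper about the boundary coefficients involving $t_0=-\infty$, $t_{n+1}=+\infty$, where plain monotonicity of $H$ is what is needed, and about the vanishing $(k,k)$ term.

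The only deviation is the covariance step. You read off $-\mathsf{Cov}$ as the mixed partial $\partial_{\theta_1}\partial_{\theta_2}\log\phi_2$ at $0$. The paper instead computes $\bE(X_{t+h}-X_t)^2$ from $\phi_2(-\theta,\theta)$ and uses stationarity. Both are equally valid.

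Your aside that $H$ may jump at $0^+$ is moot: a right-continuous distribution function with $H(0)=0$ cannot do so.
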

\begin{remark}
\textrm{We point out that, for $n=1$, (\ref{main}) contains only one term, namely
$\log \phi_1(\theta_1;t_1)=\psi (\theta_1)$, while for $n=2$, after simple calculations
{\em taking into account the conventions regarding $t_0$ and $t_{n+1}$} we obtain
\begin{equation}  \label{property 1}
\log \phi_2(\theta_1,\theta_2;t_1,t_2)=\psi (\theta_1)H(t_2-t_1) +\psi
(\theta_1+\theta_2)\left[1-H(t_2-t_1)\right] +\psi (\theta_2)H(t_2-t_1).
\end{equation}
}
\end{remark}
\begin{proof}
Note that
\begin{equation}  \label{aij}
a_{ij}:=\left[ H(t_j-t_{i-1})-H(t_j-t_i)-H(t_{j+1}-t_{i-1})+H(t_{j+1}-t_i)\right] \geq 0
\end{equation}
since $H$ is concave and hence supermodular. 
Therefore, using Lemma \ref{le:multi_dimensional}, we conclude that the right hand side of 
(\ref{main}) is in fact the logarithm of a characteristic function
of an infinitely divisible probability law on $\mathbb{R}^n$.

There remains to show that this family of distributions satisfies the Kolmogorov consistency conditions.
Denote by $F(x_1,\ldots,x_n; t_1,\ldots,t_n)$ 
the distribution function corresponding to the characteristic function defined by (\ref{main}).
In order to establish for these distributions to satisfy the consistency conditions it suffices to show that,
for every $n\geq 2$ and every sequence of values $t_1< \cdots < t_k <\cdots <t_n$,
\begin{eqnarray} \label{consistent}
&& \left. \log
\phi_{n}(\theta_1,\ldots,\theta_{k-1},\theta_k,\theta_{k+1}, \ldots,\theta_n \:;\:
t_1,\ldots,t_{k-1},t_k,t_{k+1},\ldots, t_{n}) \right|_{\theta_k=0} \\  \nonumber
&& \hspace{1.1in} = \; \log \phi_{n-1}(\theta_1,\ldots,\theta_{k-1},\theta_{k+1},
\ldots,\theta_n \: ; \: t_1,\ldots,t_{k-1},t_{k+1},\ldots, t_{n}) .
\end{eqnarray}
(The case where $k=1$ or $k=n$ is similar and its discussion is omitted.)
In view of (\ref{main}) both sides of (\ref{consistent})
consist of terms of the form
\begin{equation}  \label{term_ij}
\psi(\theta_i+\cdots+\theta_j) a_{ij}, \;\;\; 1 \leq i \leq j \leq n,
\end{equation}
where $a_{ij}$ is given by (\ref{aij}). If $k \notin \{i,i+1,j-1,j\}$   then the term
(\ref{term_ij}) apears both in the right and left hand side of (\ref{consistent}). When
$k=i$ or $k= i+1$ on the left of (\ref{consistent}) we have the two terms
\begin{eqnarray}  \label{combine}
&& \hspace{-0.4in} 
\psi(\theta_k+\cdots+\theta_j)
\left[ H(t_j-t_{k-1})-H(t_j-t_k)-H(t_{j+1}-t_{k-1})+H(t_{j+1}-t_k)\right], \\  \nonumber
&& \hspace{-0.4in} 
\psi(\theta_{k+1}+\cdots+\theta_j)
\left[ H(t_j-t_{k})-H(t_j-t_{k+1})-H(t_{j+1}-t_{k})+H(t_{j+1}-t_{k+1})\right].
\end{eqnarray}
Setting $\theta_k =0$ gives $\psi(\theta_k+\cdots+\theta_j) =
\psi(\theta_{k+1}+\cdots+\theta_j)$. Hence the two terms in (\ref{combine})
are combined into one as follows
\begin{eqnarray}   \nonumber
&& \hspace{-0.35in} \psi(\theta_{k+1}+\cdots+ \theta_j)
[ H(t_j-t_{k-1})-H(t_j-t_k)-H(t_{j+1}-t_{k-1})+H(t_{j+1}-t_k) \\  \nonumber
&& \hspace{0.8in} + H(t_j-t_{k})-H(t_j-t_{k+1})-H(t_{j+1}-t_{k})+H(t_{j+1}-t_{k+1}) ]  \\ \nonumber
&& \hspace{-0.35in} = \psi(\theta_{k+1}+\cdots+ \theta_j) \\
&& \hspace{0.3in} \times [ H(t_j-t_{k-1})-H(t_{j+1}-t_{k-1})
 -H(t_j-t_{k+1})+H(t_{j+1}-t_{k+1}) ].  \label{combine2}
\end{eqnarray}
The expression in (\ref{combine2}) is equal to the corresponding term in the right
hand side of (\ref{consistent}). The same can be shown when $k=j-1$ and $k=j$ which
gives the term containing $\psi(\theta_i+\cdots+\theta_{k-1})$ and thus we establish
(\ref{consistent}).

The stationarity of the process
$\{X_t;t\in \mathbb{R}\}$ is immediate from the fact that
\begin{equation*}
\log \phi_n(\theta_1,\ldots ,\theta_n;t_1+\tau ,\ldots ,t_n+\tau )
=\log \phi_n(\theta_1,\ldots ,\theta_n;t_1,\ldots ,t_n),
\end{equation*}
for any $\tau \in \mathbb{R}$, as can be readily verified from (\ref{main}).

To establish (\ref{Cov}) use (\ref{property 1}) with $\theta_2=\theta$,
$\theta_1 = -\theta$ and $h=t_2-t_1$ in  to obtain
\begin{eqnarray} \nonumber
\phi_2(-\theta ,\theta ;t,t+h) &=&\bE  \left[ e^{i\theta \left(
X_{t+h}-X_t\right) }\right] \\
&=& \exp \left\{ \psi \left( 0\right) \left[ 1-H(h)
\right] +\psi (\theta )H(h)+\psi (-\theta )H(h)\right\}  \notag \\
&=&\exp \left\{ H(h)\left[ \psi (\theta )+\psi (-\theta )\right] \right\} \label{Difference}
\end{eqnarray}
where in the last equation we have taken into account that $\psi \left(0\right) =0$.
We have also assumed that $h>0$. The situation where $h$
is negative can be dealt with similarly. Differentiating twice this
expression and evaluating at $\theta =0$ we obtain
\begin{equation}  \label{second-moment}
\bE  (X_t-X_{t+h})^2=-\left. \frac{d^2}{d\theta^2}\phi_2(-\theta,\theta;t,t+h)\right\vert_{\theta=0}
=-2H(h)\psi^{\prime \prime}(0).
\end{equation}
From the stationarity of the process $X_t$ we have
$\mathsf{Cov}(X_t,X_{t+h})=\mathsf{Var}(X_t)-\frac{1}{2} \bE  (X_t-X_{t+h})^2$.
Taking into account the fact that
$\psi^{\prime \prime}(0) = \mathsf{Var}(X_t)$ and (\ref{second-moment})
establishes (\ref{Cov}). We also note that $H$ is necessarily continuous and therefore the process
is necessarily mean square continuous. 
\end{proof}

\begin{remark}
{\rm A special case of particular importance arises when $H(t)=1-e^{-\mu t}$
for some $\mu >0$. In that case
\begin{eqnarray}  \label{cor-str}
\log \phi_n(\theta_1,\ldots ,\theta_n;t_1,\ldots,t_n) && \\
&& \hspace{-1.3in} =\;\sum_{1\leq i\leq j\leq n}\psi
(\theta_i+\cdots +\theta_j) \, \left(1-e^{-\mu (t_i-t_{i-1})}\right) \,
e^{-\mu(t_j-t_i)}\, \left(1-e^{-\mu (t_{j+1}-t_j)}\right).   \nonumber
\end{eqnarray}
Note that the correlation structure in the above expression is of the form (\ref{corelstructure})
which corresponds to that of an Ornstein-Uhlenbeck process (\ref{key}).
In particular, if second moments exist, the covariance function is given by
$\mathsf{Cov}(X_t,X_{t+h}) = \mathsf{Var}(X_0)e^{-\mu \left\vert h\right\vert}$.}
\end{remark}

\begin{remark} \label{rem:gaussian}
{
\rm Suppose that $\{X_t; t\in \bR\}$ is a real-valued, zero-mean, {\em stationary} Gaussian process
with covariance function $C(t):= \bE[X_sX_{s+t}]$, $t \geq 0$, with $C(0)>0$ and
$\lim_{t\rightarrow \infty}C(t) = 0$. Define the correlation function $r(t):= C(t)/C(0)$, $t\geq0$.
If the covariance function $C$ is a {\em convex} function then the Gaussian process $\{X_t;t \in \bR\}$
belongs to the family GCID with $\psi(\theta) := -\frac{C(0)}{2} \theta^2$ and $H(t) := 1-r(t)$.
Then, with the standard assumption that $t_0=-\infty$ and $t_{n+1}=+\infty$, 
\begin{eqnarray} \nonumber
\bE[e^{i \sum_{i=1}^n \theta_i X_{t_i}}] &=&
e^{-\frac{1}{2}\sum_{i,j=1}^n \theta_i\theta_j C(|t_i-t_j|)} \\  \label{gaussian_exmpl}
&=& e^{\frac{C(0)}{2} \sum_{1\leq i \leq j \leq n} (\theta_i+\cdots+\theta_j)^2
\left[ H(t_{j+1}-t_{i-1}) - H(t_{j+1}-t_i) -H(t_j-t_{i-1}) +H(t_j-t_i)\right]}.
\end{eqnarray}
It is clear that the above equality holds for {\em any} stationary Gaussian process for which
$\lim_{t\rightarrow \infty}C(t) =0$ as a result of the identity (\ref{quadratic}). However
only when $r(t)$ is convex (and therefore $H$ concave) do we have $ H(t_{j+1}-t_{i-1}) - H(t_{j+1}-t_i)
 -H(t_j-t_{i-1}) +H(t_j-t_i) \geq0$.
}
\end{remark}

\begin{lemma}  \label{le:drift}
For any probability distribution $H$ on $\bR$ such that $H(0)=0$ and $H$ is concave
the following identity holds for any $n$:
\begin{eqnarray}  \nonumber
\sum_{1\leq i\leq j\leq n}(\theta_i+\cdots +\theta_j) \left[
H(t_j-t_{i-1})-H(t_j-t_i)-H(t_{j+1}-t_{i-1})+H(t_{j+1}-t_i)\right] && \\  \label{identity}
& & \hspace{-1.5in} = \; \theta_1+\cdots+\theta_n.
\end{eqnarray}
Therefore, if $\psi (\theta)$ contains a drift term, i.e.\ if
$\psi(\theta) = i\beta \theta + \psi_0(\theta)$ where
\begin{equation*}
\psi_0(\theta) := -\frac{1}{2}\sigma^2 \theta^2 + \int_{\mathbb{R}\setminus\{0\}}
\left(e^{i\theta x}-1-i\theta x \ind(|x|<1)\,\right)\nu(dx)
\end{equation*}
(\ref{main}) becomes
\begin{eqnarray*}
\log \phi_n(\theta_1,\ldots,\theta_n) &=& \beta (\theta_1+\cdots+\theta_n) \\
&& \hspace{-1.3in} +\sum_{1\leq i\leq j\leq n}
\psi_0(\theta_i + \cdots + \theta_j)
\left[ H(t_j-t_{i-1})-H(t_j-t_i)-H(t_{j+1}-t_{i-1})+H(t_{j+1}-t_i) \right].
\end{eqnarray*}
\end{lemma}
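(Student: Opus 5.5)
Since both sides of (\ref{identity}) are linear in $\boldsymbol{\theta}$, it suffices to compare the coefficients of each $\theta_k$. On the left, $\theta_k$ appears exactly in the terms with $i\le k\le j$. Writing $a_{ij}$ as in (\ref{aij}), the claim reduces to showing that
\[
\sum_{i=1}^k\sum_{j=k}^n a_{ij} \;=\; 1 \qquad \text{for every } k=1,\ldots,n .
\]
Concavity of $H$ plays no role here; only the boundary conventions matter. These are $t_0=-\infty$ and $t_{n+1}=+\infty$, together with $H(+\infty)=1$ and $H(-\infty)=H(\text{finite})=0$. Precisely, $H(t_j-t_{i-1})$ equals $1$ when $i=1$, and $H(t_{n+1}-t_i)=1$, while $H(t_{n+1}-t_0)=1$ as well. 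The last value is the one consistent with the $n=1$ case of (\ref{main}), for which $a_{11}=1$.

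The double sum telescopes. Write $a_{ij}=g(i,j)-g(i,j+1)$ with $g(i,j):=H(t_j-t_{i-1})-H(t_j-t_i)$. Summing over $j$ from $k$ to $n$ gives $g(i,k)-g(i,n+1)$, and $g(i,n+1)=1-1=0$ for $1\le i\le n$. Hence the sum equals $\sum_{i=1}^k[H(t_k-t_{i-1})-H(t_k-t_i)]$. This telescopes in $i$ to $H(t_k-t_0)-H(t_k-t_k)=1-H(0)=1$. This establishes (\ref{identity}).

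Alternatively one could invoke (\ref{ba}) with the $b_{kl}$ obtained from (\ref{ab}), but the direct telescoping is cleaner. The only delicate point is justifying the values at $\pm\infty$: for $i=1$ and $j=n$, the term $H(t_{n+1}-t_0)$ must be read as $1$. The intended reading is a limit as $t_0\to-\infty$ and $t_{n+1}\to\infty$, which is legitimate since $H$ is a distribution function with $H(x)\to1$.

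For the second statement, substitute $\psi(\theta_i+\cdots+\theta_j)=i\beta(\theta_i+\cdots+\theta_j)+\psi_0(\theta_i+\cdots+\theta_j)$ into (\ref{main}). Then apply (\ref{identity}) to the linear part, which collapses to $i\beta(\theta_1+\cdots+\theta_n)$ (the displayed formula in the statement presumably omits the factor $i$). As a consistency check, this matches (\ref{beta}), since $\boldsymbol{\beta}_k=\beta\sum_{i\le k\le j}a_{ij}=\beta$.
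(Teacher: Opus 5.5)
Your proof is correct and follows essentially the paper's route: isolate the coefficient of each $\theta_k$, telescope in $j$ using $H(t_{n+1}-t_{i-1})=H(t_{n+1}-t_i)=1$, then telescope in $i$ to $H(t_k-t_0)-H(0)=1$. Your remarks that concavity is not needed and that the drift term should read $i\beta(\theta_1+\cdots+\theta_n)$ are both right, and the aside that $H$ vanishes at finite arguments is a harmless slip (only $H(0)=0$ is used, and $H$ is never evaluated at $-\infty$).
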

\begin{proof}
The left hand side of (\ref{identity}) can be written as
\begin{equation}  \label{identity1}
\sum_{r=1}^n \theta_r \sum_{1\leq i \leq r \leq j \leq n}
\left[ H(t_j-t_{i-1})-H(t_j-t_i)-H(t_{j+1}-t_{i-1})+H(t_{j+1}-t_i)\right]
\end{equation}
and the inner sum in (\ref{identity1}) can be written as
\begin{eqnarray} \label{identity2}
 \sum_{1\leq i \leq r}  \left\{ \sum_{r \leq j \leq n} \left[  H(t_j-t_{i-1})-H(t_{j+1}-t_{i-1})  \right] +
\sum_{r \leq j \leq n} \left[ H(t_{j+1}-t_i) - H(t_j-t_i) \right] \right\}  . &&
\end{eqnarray}
The two inner sums in (\ref{identity2}) are telescopic and reduce to
\[
H(t_r-t_{i-1}) - H(t_{n+1}-t_{i-1}) \;=\; H(t_r-t_{i-1}) - H(\infty) \;=\; H(t_r-t_{i-1}) - 1
\]
and
\[
H(t_{n+1}-t_i ) - H(t_r-t_i )   \;=\; H(\infty) - H(t_r-t_i )  \;=\; 1-  H(t_r-t_i )
\]
respectively. (In the above we have taken into account the convention $t_{n+1} = + \infty$.)
Thus (\ref{identity2}) becomes
\begin{eqnarray} \nonumber
\sum_{1\leq i \leq r}  \left\{ H(t_r-t_{i-1}) -  H(t_r-t_i ) \right\}  &=& H(t_r - t_0) - H(t_r-t_r) \\
&& \hspace{-0.8in} \;=\; H(t_r - (-\infty)) - H(0)     \label{identity3}
\;=\;  H(\infty)-0 \;=\;1
\end{eqnarray}
and hence (\ref{identity1}) reduces to $\sum_{r=1}^n \theta_r$  and the proof is complete.
\end{proof}

\begin{proposition}  \label{prop:superposition}
If $\{X_t;t\geq 0\}$, $\{Y_t; t \geq 0\}$, are {\em independent} GCID processes with
characteristic exponents
$\psi(\theta)$ and $c \psi(\theta)$ respectively (where $c>0$) and correlation structure
functions $H_1$ and $H_2$ respectively, then $\{Z_t; t\geq 0\}$ where $Z_t=X_t+Y_t$
is also GCID with characteristic exponent $(1+c)\psi(\theta)$ and correlation structure
function $H(t):=\frac{1}{1+c}H_1(t)+ \frac{c}{1+c} H_2(t)$.
\end{proposition}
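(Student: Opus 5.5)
The plan is to work directly with the finite-dimensional characteristic functions (\ref{main}). Fix $n$, times $t_1\le\cdots\le t_n$ and $\theta_1,\ldots,\theta_n$. Since $\{X_t\}$ and $\{Y_t\}$ are independent, the joint characteristic function of $(Z_{t_1},\ldots,Z_{t_n})$ factors as
\[
\bE\, e^{i\sum_k \theta_k Z_{t_k}} \;=\; \bE\, e^{i\sum_k\theta_k X_{t_k}}\;\bE\, e^{i\sum_k\theta_k Y_{t_k}} .
\]
Taking logarithms and using (\ref{main}) for each factor, the log-characteristic function is
\[
\sum_{1\le i\le j\le n}\psi(\theta_i+\cdots+\theta_j)\bigl[a^{(1)}_{ij}+c\,a^{(2)}_{ij}\bigr],
\]
where $a^{(m)}_{ij}$ denotes the bracket (\ref{aij}) computed with $H_m$.

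The key observation is that the bracket in (\ref{aij}) is linear in $H$. Writing $a_{ij}$ for the bracket built from $H=\frac{1}{1+c}H_1+\frac{c}{1+c}H_2$, linearity gives $a^{(1)}_{ij}+c\,a^{(2)}_{ij}=(1+c)\,a_{ij}$. Hence the log-characteristic function equals
\[
\sum_{1\le i\le j\le n}\bigl[(1+c)\psi\bigr](\theta_i+\cdots+\theta_j)\,a_{ij},
\]
which is exactly (\ref{main}) with exponent $(1+c)\psi$ and structure function $H$.

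It remains to check that $H$ and $(1+c)\psi$ meet the hypotheses of Theorem \ref{basic-theorem-H}:
\begin{itemize}
\item $(1+c)\psi$ is again the characteristic exponent of an infinitely divisible law, with triplet $((1+c)\beta,(1+c)\sigma^2,(1+c)\nu)$.
\item $H$ is a convex combination of probability distribution functions, so it is itself one.
\item $H(0)=0$, since $H_1(0)=H_2(0)=0$.
\item $H$ is concave, because a nonnegative combination of concave functions is concave.
\end{itemize}
The conventions $t_0=-\infty$ and $t_{n+1}=+\infty$ are harmless here. They only require $H(\infty)=1$, which holds, and $H(-\infty)$-type terms never occur since all arguments $t_j-t_{i-1}$ are at least $0$.

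There is no real obstacle. The only point needing care is that equality of all finite-dimensional characteristic functions identifies $\{Z_t\}$ in law with the GCID process of Theorem \ref{basic-theorem-H}. This follows by uniqueness of characteristic functions together with Kolmogorov consistency, already established in that theorem.
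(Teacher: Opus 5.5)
Your proposal is correct and follows essentially the same argument as the paper. Both factor the joint characteristic function by independence and use the linearity of the bracket (\ref{aij}) in $H$ to get $a^{(1)}_{ij}+c\,a^{(2)}_{ij}=(1+c)a_{ij}$; both then check that $(1+c)\psi$ is an infinitely divisible exponent and that $H$ is a concave distribution function with $H(0)=0$. Your extra remarks on the conventions $t_0=-\infty$, $t_{n+1}=+\infty$ and on identification in law via uniqueness of characteristic functions are minor points the paper leaves implicit.
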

\begin{proof}
From Theorem \ref{basic-theorem-H} and the independence of the processes $\{X_t\}$,
$\{Y_t\}$, we see that $\log \bE [\exp(\theta_1Z_{t_1}+\cdots+\theta_nZ_{t_n})]$ is a sum of
terms of the form
\begin{eqnarray*}
&& \psi(\theta_i+\cdots+\theta_j)\big[H_1(t_j-t_{i-1})- \cdots\big] +c\psi(\theta_i+\cdots+\theta_j)
\big[H_2(t_j-t_{i-1})- \cdots\big] \\
&& \hspace{0.5in} \;=\; (1+c)\psi(\theta_i+\cdots+\theta_j)
\left[\frac{1}{1+c}H_1(t_j-t_{i-1}) +\frac{c}{1+c}H_2(t_j-t_{i-1})- \cdots\right] .
\end{eqnarray*}
Clearly $(1+c)\psi$ is a characteristic exponent of an infinitely divisible distribution
when $\psi$ is. Also $H:=\frac{1}{1+c}H_1+ \frac{c}{1+c} H_2$ is a concave
distribution function on $[0,\infty)$ with $H(0)=0$ whenever $H_1$ and $H_2$ are.
\end{proof}

We end the section by defining a subclass of GCID processes
with spectrally positive L\'evy marginal distribution and correlation structure
similar to that of the classical Ornstein-Uhlenbeck process.
\begin{definition} \label{def:ESPC}
A process $\left\{X_t ; t\in \bR\right\}$ will be called an {\em Exponential Spectrally
Positive Coverage Processes} (ESPC) if
\vspace{-0.1in}
\begin{itemize}
\item[(a)] For all $t\in \mathbb{R}$ $X_t $ has spectrally positive L\'{e}vy
distribution with
\begin{equation*}
\psi(\theta) \;:=\; \log \bE  \left[ e^{i\theta X_t}\right]
\;=\; \int_0^\infty (e^{i\theta x}-1)\nu(dx)
\end{equation*}
where $\int_0^\infty x\nu(dx)<\infty $ (but $\int_0^\infty \nu(dx)$
may or may not be finite).
\item[(b)] The finite dimensional distributions of $\{X_t\}$, for $t_1<\cdots<t_n$, are given by
\begin{eqnarray}   \label{SpPos}
\hspace{-0.3in} \log \bE  \left[ e^{i(\theta_1 X_{t_1}
+ \theta_2 X_{t_2} + \cdots +\theta_n X_{t_n})}\right] && \\ \nonumber
&&\hspace{-2.25in} = \hspace{-0.1in}
\sum_{1\leq j\leq k\leq n} \int_0^\infty
\left( e^{i(\theta_j +\cdots + \theta_k)x}-1\right) \nu(dx) \,
(1-e^{-\mu (t_j-t_{j-1})})e^{-\mu (t_k-t_j)}(1-e^{-\mu (t_{k+1}-t_k)}) .
\end{eqnarray}
\end{itemize}
The class of ESPC processes is a subclass of the GCID processes defined in
Theorem \ref{basic-theorem-H}.
\end{definition}
The fact that (\ref{SpPos}) indeed defines a consistent family of finitely dimensional
distributions is a direct consequence of Theorem \ref{basic-theorem-H}.
Note that the mean value of the above process is given by
$\bE  X_t=\int_0^\infty x\nu(dx)$, its variance by
$\mathsf{Var}(X_t)=\int_0^\infty x^2 \nu(dx)$ (provided this last integral is finite),
and its covariance by $\mathsf{Cov}(X_{t_1},X_{t_2})
= e^{-\mu (t_2-t_1)}\int_0^\infty x^2 \nu(dx)$ under the same proviso. The connection
of these processes to limits of superpositions of ON/OFF processes will be discussed
in section \ref{sec:ONOFFlim}.


\section[Poisson marginals and general correlation structure]{Processes with Poisson
and compound Poisson marginals. The $M/GI/\infty$ process and its
generalizations}

In this section we consider processes that fall into the framework of Theorem
\ref{basic-theorem-H} arising from infinite server queues with Poisson arrivals and
general service times. In the first subsection we consider the number of customers
in the system and thus obtain a stationary process with Poisson marginals,
while in the second we will introduce ``marked'' $M/GI/\infty$ queues
where to each customer a real--valued mark is associated. In this fashion
we obtain stationary processes with compound Poisson marginal distributions.

\subsection{The number of customers in the system} \label{sec:mg1}

Suppose that $\left\{ T_l ;l\in \mathbb{Z}\right\} $ is
a Poisson process on the real line with intensity $\lambda$.  $T_l$ denotes the
arrival epoch of the $l$th customer and $\sigma_l$ the customer's service
requirement. We assume that
$\{\sigma_l; l \in \mathbb{Z}\}$ is an i.i.d. sequence of positive
random variables with distribution function $G(x) := \bP(\sigma_l \leq x)$
with finite mean $m=\int_0^\infty (1-G(x))dx$ and independent of the Poisson
process $\{T_l\}$. We will use $\overline{G}:=1-G$ to designate the
corresponding survival function.
The number of customers in the $M/GI/\infty$ system at time $t$ is then
\begin{equation*}
X_t \;=\; \sum_{l\in \mathbb{Z}}
{\bf 1}\left(T_l \leq t < T_l +\sigma_l \right) , \hspace{0.1in} t \in \mathbb{R}.
\end{equation*}
A particularly useful point of view is to consider a Poisson process $\mathcal{M}$
on the upper half plane $\bR\times \bR^+$ with points $\{M_l; l\in\bZ\}$  defined
by $M_l:=(T_l,\sigma_l)$ and mean measure
$\mu(dt\times dx):=\lambda dt \times G(dx)$.
The number of customers in the system at time $t$ is the number of points of
$\mathcal{M}$ in the wedge $A:=\{(s,x)\in \bR \times \bR^+ : s< t<s+x\}$ and this
is a Poisson random variable with mean $\mu(A) = \iint_{ s< t<s+x}
\lambda ds G(dx) = \int_{s=-\infty}^t \lambda ds \int_{x=t-s}^\infty G(dx) =
\lambda \int_{-\infty}^t \overline{G}(t-s) ds = \lambda m$.

Let $-\infty =t_0 < t_1 < t_2 <\cdots < t_n < t_{n+1} = +\infty$ and denote by
\begin{equation*}
\phi_n(\theta_1,\ldots ,\theta_n;t_1,\ldots ,t_n)
:=\bE  [e^{i\theta_1 X_{t_1}+ i\theta_2X_{t_2}+\cdots +i\theta_nX_{t_n}}]
\end{equation*}
the joint characteristic function of $(X_{t_1},\ldots,X_{t_n})$. For
$1\leq i\leq j\leq n,$ denote by $Y_{ij}$ the number of customers who arrived
in the system in the time interval $(t_{i-1},t_i]$ and left in the time
interval $(t_j,t_{j+1}]$. Also, let
$A_{ij}:=\{(x,t):t_j<x+t\leq t_{j+1};x\geq 0;t_{i-1}<t\leq t_i\}$ and
\begin{equation}   \label{muAij}
\mu (A_{ij}) = \iint_{A_{ij}}\lambda dt G(dx).
\end{equation}
Then the collection of random variables $\{Y_{ij};1\leq i\leq j\leq n\}$ are
independent, Poisson distributed, with parameters $\mu (A_{ij})$.
Furthermore,
\begin{equation*}
X_{t_k}=\sum_{j=k}^n \sum_{i=1}^k Y_{ij}
= \sum_{j=1}^n \sum_{i=1}^n Y_{ij} \ind(i \leq k \leq j)
\end{equation*}
and
\[
\sum_{k=1}^n \theta_k X_{t_k} =
\sum_{k=1}^n \theta_k \sum_{j=1}^n \sum_{i=1}^n Y_{ij} \ind(i \leq k \leq j) =
\sum_{j=1}^n \sum_{i=1}^n Y_{ij} \sum_{k=1}^n \theta_k \ind(i \leq k \leq j) =
\hspace{-0.05in}
\sum_{1\leq i \leq j \leq n} \hspace{-0.05in} \left(\theta_i +\cdots + \theta_j\right) Y_{ij}
\]
hence
\begin{eqnarray}
\phi_n(\theta_1,\ldots ,\theta_n; t_1,\ldots ,t_n) &=&
\bE  \left[ \prod_{1\leq i\leq j\leq n}e^{i(\theta_i+\cdots +\theta_j)Y_{ij}} \right] \;=\;
\prod_{1\leq i\leq j\leq n}\bE  \left[ e^{i(\theta_i +\cdots +\theta_j)Y_{ij}}\right]
\notag \\
&=&\prod_{1\leq i\leq j\leq n}e^{\mu (A_{ij})
\left( e^{i(\theta_i+\cdots +\theta_j)}-1\right)}  \notag \\
&=&\exp\left( \sum_{1\leq i\leq j\leq n}\mu (A_{ij})
\left(e^{i(\theta_i+\cdots +\theta_j)}-1\right) \right) .
\label{ch.f}
\end{eqnarray}
From (\ref{muAij})
\begin{eqnarray*}
\mu (A_{ij}) &=&\lambda \int_{t=t_{i-1}}^{t_i}
\left[\int_{x=t_j-t}^{t_{j+1}-t}dG(x)\right] dt \\
&=&\lambda m
\left[ \frac{1}{m}\int_{t=t_{i-1}}^{t_i}\overline{G}(t_j-t)dt
-\frac{1}{m}\int_{t=t_{i-1}}^{t_i}\overline{G}(t_{j+1}-t)dt \right]
\end{eqnarray*}
or
\begin{equation}  \label{mij}
\mu (A_{ij})=\rho \left[ G_I(t_j-t_{i-1})-G_I(t_j-t_i)-G_I(t_{j+1}-t_{i-1})
+G_I(t_{j+1}-t_i) \right]
\end{equation}
where, in the above expression, $\rho =\lambda m$ and we have used the
integrated tail distribution given by
\begin{equation}   \label{GIx}
G_I(x):=\frac{1}{m}\int_0^x \overline{G}(y)dy.
\end{equation}
Therefore this is also a GCID process with $H(t)=G_I(t)$ and
$\psi (\theta )=\rho \left(e^{i\theta }-1\right)$.
The supermodularity of $G_I(t)$ follows from its concavity which, in turn, is
a consequence of the fact that it is an integral of an non--increasing
function. The marginal distribution of $X_t$ is of course Poisson with
mean $\rho$.

Of particular importance is the case where $G(t)=1-e^{-\mu t}$. In this case $G_I(t)=G(t)$
for all $t$ and the right hand side of (\ref{mij}) can be writen as
$\left(1-e^{-\mu (t_i-t_{i-1})}\right)e^{-\mu (t_j-t_i)}\left(1-e^{-\mu(t_{j+1}-t_j)}\right)$.
The resulting process
is of the ESPC type with joint characteristic function given by
\begin{eqnarray}   \label{MMinfty}
\log \phi_n(\theta_1,\ldots,\theta_n;t_1,\ldots,t_n) && \\   \nonumber
&& \hspace{-1.4in} =\sum_{1\leq i\leq j\leq n}\rho
\left( e^{i(\theta_i +\cdots +\theta_j)}-1\right)
\left(1-e^{-\mu (t_i-t_{i-1})}\right)e^{-\mu (t_j-t_i)}\left(1-e^{-\mu(t_{j+1}-t_j)}\right).
\end{eqnarray}
This corresponds to the $M/M/\infty$ case and is a special case of and ESPC process
(\ref{SpPos}) with $\psi (\theta )=\rho \left( e^{i\theta }-1\right)$.

\subsection{The marked $M/GI/\infty $ system}

The process in section \ref{sec:mg1} can be generalized by considering a marked
$M/GI/\infty $ queue. Suppose that $\left\{ T_l ; l\in \mathbb{Z}\right\} $ is
a Poisson process with intensity $\lambda$ defined on the real line and
$\{\sigma_l; l \in \mathbb{Z}\}$ a sequence of independent, identically distributed
positive random variables with distribution function $G(x) := \bP(\sigma_l \leq x)$
with finite mean $m$. $\sigma_l$ is to be thought of as the service
requirement of the $l$th customer who arrives in the system (at time $T_l$).
Also, let $\{\eta_l; l \in \mathbb{Z}\}$ be an i.i.d.\ sequence of random variables
representing the mark process. $\eta_l$ is a real-valued mark corresponding to some
attribute of the $l$th customer and the characteristic function of the marks is denoted by
\begin{equation}    \label{mrgnl}
	\chi (\theta) := \bE   e^{i\theta \eta }.
\end{equation}
Stationary ``marked $M/GI/\infty$ process''
\begin{equation*}
X_t \;=\; \sum_{n\in \mathbb{Z}}
{\bf 1}\left(T_l \leq t < T_l +\sigma_l \right) \eta_l, \hspace{0.1in} t \in \mathbb{R}.
\end{equation*}
In view of the above assumptions its marginal distribution is
\begin{equation*}
\bE  \left[ e^{i\theta X_t}\right] = e^{\rho \big( \chi(\theta)-1\big)}
\end{equation*}
where $\rho = \lambda m$. (This is of course a compound Poisson distribution.)
The joint characteristic function for the finite dimensional distributions of this process
is given by the following generalization of (\ref{ch.f})
\begin{equation}  \label{marked-MG}
\phi_n (\theta_1 ,\ldots ,\theta_n ;t_1 ,\ldots,t_n)
= e^{-\sum_{1\leq i\leq j\leq n}\mu (A_{ij})\left( 1-\chi (\theta_i +\cdots +\theta_j)\right)}
\end{equation}
where the $\mu (A_{ij})$ are given by (\ref{mij}). Thus, (\ref{marked-MG}) corresponds to
(\ref{main}) with $H(t) = G_I(t)$ given by (\ref{GIx}) and $\psi(\theta) = \rho\big(\chi(\theta)-1\big)$
where $\chi(\theta)$ is given by (\ref{mrgnl}).

\section{Triangular arrays of ON/OFF sources and limiting theorems} \label{sec:ONOFFlim}
In this section we consider triangular arrays of ON/OFF sources and obtain conditions under
which their superposition converges in distribution to an ESPC process (Definition \ref{def:ESPC}).
There is a large literature on the use of ON/OFF sources in developing models of
internet traffic that provide an explanation for the statistical characteristics observed
and in particular for the Long Range Dependence, self similarity, and heavy tailed
behavior which characterizes it. Limit theorems for the superposition of ON/OFF
sources and their corresponding cumulative input processes have been studied in
great detail in the 1990's and  2000's.
Two types of models, particularly popular for modeling high speed data traffic
and explaining its statistical characteristics,
are the superposition of $M$ ON/OFF processes and the infinite source
Poisson model (also referred to as the $M/G/\infty$ input model). Heath, Resnick and
Samorodnitsky \cite{HeathResnickSamorodnitsky} and Mikosch et al. \cite{mikosch}
show that, depending on the scaling behavior one may obtain for the cumulative input
process either fractional Brownian motion (fBm) with Hurst coefficient in $(\frac{1}{2},1)$
i.e.\ a Gaussian process with Long Range Dependence or stable L\'evy motion.
We also mention and
Mikosch and Samorodnitsky \cite{MKSA} which extend the results for ON/OFF processes
described by more general marked point processes.
For convergence to L\'evy motion see also Konstantopoulos and Lin \cite{Takis},
Maulik  and Resnick \cite{MaulikResnick}, and
Resnick and van den Berg \cite{ResnickBerg}.
Resnick and Samorodnitsky \cite{ResnickSamorodnitsky} consider hierarchical models and
Pipiras and Taqqu \cite{Pipiras} examine models based on renewal-reward processes.
More general models of this type are fall within the framework of the {\em Telecom process}
(see  Wolpert and Taqqu \cite{WolpertTaqqu}, Kaj and Taqqu \cite{KajTaqqu}).



Consider a continuous time Markov process $\left\{ \zeta_t ;t\geq 0\right\} $
with state space $\{0,r\}$ and generator
\[  
Q=\left[ \begin{array}{cc} -\lambda & \lambda \\ \mu & -\mu \end{array} \right] .
\]  
The transition probability matrix of this process is
\begin{equation}   \label{TransProb}
P(t) \;=\; \left[
\begin{array}{ll}
\frac{\mu }{\lambda +\mu } & \frac{\lambda }{\lambda +\mu } \\
\frac{\mu }{\lambda +\mu } & \frac{\lambda }{\lambda +\mu }
\end{array}
\right] +e^{-(\lambda+\mu) t}\left[
\begin{array}{rr}
\frac{\lambda }{\lambda +\mu } & -\frac{\lambda }{\lambda +\mu } \\
-\frac{\mu }{\lambda +\mu } & \frac{\mu }{\lambda +\mu }
\end{array}
\right] .
\end{equation}
If $\bP(\zeta_0 = 0)=\frac{\mu}{\lambda+\mu}$, $\bP(\zeta_0 = r)
 = \frac{\lambda}{\lambda+\mu}$, the process $\{\zeta_t\}$ is
stationary. We will refer to $\lambda$ as the {\em OFF rate}, $\mu$ as the
{\em ON rate} and $r>0$ as the {\em intensity} of the source. The triplet
$(\lambda,\mu,r)$ characterizes completely this ON/OFF source. In this section
we give results that show how the class of processes defined by (\ref{main})
can arise as a limit of superpositions of ON/OFF processes of this type with
varying parameters.

Consider next a collection of $n$ sources each of which generates intermittent
traffic: Suppose that the $j$th source is inactive during an exponential period with
rate $\lambda_j$ and then becomes active for an
independent period of time, exponentially distributed, with rate $\mu$,
the same for all sources. When the $j$th source is ON, it generates fluid at
rate $r_j$, whereas when it is OFF it does not generate any fluid.
We consider the effect of the superposition of a large number of such sources
under the assumption that the sources are independent and that each individual
source generates traffic infrequently.

In order to study the asymptotic behavior of such a superposition let us introduce
a triangular array of independent, markovian ON/OFF processes,
$\left\{ \zeta_{nj}(t);t\geq 0\right\}$, $j=1\ldots n$, $n\in \mathbb{N}$, with
corresponding triplets $(\lambda_{nj},\mu,r_{nj})$.
Note that we have made the simplifying assumption that the ON rates are the same
for all elements of the array. For simplicity we will assume the stationarity of the
elements of the family
$\{\zeta_{nj}(t)\}$ under the probability measure $\bP$. Denote by
\begin{equation} \label{Xnt1}
	X_n(t) \;=\; \sum_{j=1}^n \zeta_{nj}(t), \;\;\;\; t \in [0,T], \;\; n=1,2,\ldots
\end{equation}
the sequence of superpositions of the ON/OFF processes. These have sample
paths in the space $\mathbb{D}[0,T]$ consisting of the \textit{c\`adl\`ag}
functions on $[0,T]$, i.e.\ all functions that are continuous from the right
and have left limits.
At a fixed point in time, $t$, we obtain a double array of independent random variables
\begin{equation}    \label{zetanj}
\zeta_{nj}(t) \;=\; \left\{ \begin{array}{ccc} 0 & \mbox{ with prob. } &
\frac{\mu}{\lambda_{nj} + \mu}, \\   & & \\
r_{nj} & \mbox{ with prob. } &
\frac{\lambda_{nj}}{\lambda_{nj} + \mu}  \end{array} \right.
\end{equation}
with corresponding characteristic functions
$ 
\bE e^{i\theta \zeta_{nj}(t)} \;=\; \frac{\mu}{\lambda_{nj} + \mu}
+ \frac{\lambda_{nj}}{\lambda_{nj} + \mu} \, e^{i \theta r_{nj}}.
$ 

A double array of independent random variables $\{\chi_{nj}\}$, $j=1,\ldots,n$,
$n \in \bN$, of real random variables is called {\em uniformly asymptotically negligible}
or {\em u.a.n.} (alternatively {\em null}, see \cite{Kallenberg}) if
\begin{equation}   \label{uan}
\lim_{\epsilon \rightarrow 0} \max_{1\leq j \leq n} \bP(|\chi_{nj}| > \epsilon) \;=\; 0.
\end{equation}
If we denote by $F_{nj}(x):= \bP(\chi_{nj}\leq x)$, $x \in \bR$, the distribution functions
of the elements of the triagular array and by
$\phi_{nj}(\theta):= \int_\bR e^{i\theta x} F_{nj}(dx)$, $\theta \in \bR$, the corresponding
characteristic functions, a necessary and sufficient condition for the triangular array to be
{\em u.a.n.} is (see \cite[p.305]{FristedtGrey}) 
\begin{equation}  \label{uan1}
\lim_{n\rightarrow \infty} \max_{1\leq j \leq n} \left| \phi_{nj}(\theta) -1 \right| = 0
\hspace{0.2in} \mbox{ for all $\theta \in \bR$}.
\end{equation}
The following result (Theorem 24 in \cite[p.311]{FristedtGrey}) gives necessary and sufficient
conditions for the convergence in distribution of the row sums of such a double array.
\begin{theorem} \label{th:FrG} Let $\{\chi_{nj}\}$, $j=1,\ldots,n$, $n\in \bN$, be
a u.a.n. array of nonnegative real random variables. In order that the sequence
of row sums, $W_n:=  \sum_{j=1}^n \chi_{nj}$, converge in distribution it is necessary and
sufficient that there exist $\gamma >0$ and a L\'evy measure $\nu$ on $\bR^+$ satisfying
the conditions
\begin{itemize}
\item[C1.] $\displaystyle \nu[x,\infty) = \lim_{n\rightarrow \infty}
\sum_{j=1}^n \bP(\chi_{nj}\geq x)$ if $x >0$ and $\nu\{x\}=0$,
\item[C2.]
$\displaystyle
\gamma \;=\; \lim_{\epsilon \rightarrow 0} \limsup_{n\rightarrow \infty}
\sum_{j=1}^n \bE[\chi_{nj}\ind(\chi_{nj} \leq \epsilon)] \;=\;
\lim_{\epsilon \rightarrow 0} \liminf_{n\rightarrow \infty}
\sum_{j=1}^n \bE[\chi_{nj}\ind(\chi_{nj} \leq \epsilon)]  .
$
\end{itemize}
When these conditions are satisfied, the sequence $\{W_{n}\}$ converges
in distribution to an infinitely divisible random variable with characteristic
function $\exp\left(\gamma \theta+ \int_0^\infty (e^{i\theta x} - 1) \nu(dx)\right)$.
\end{theorem}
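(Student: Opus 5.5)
This is the classical limit theorem for u.a.n.\ arrays of nonnegative summands, cited from Fristedt and Gray, so the natural plan is the standard route via Laplace transforms. Since the $\chi_{nj}$ are nonnegative, I would work with $L_n(s):=\bE e^{-sW_n}=\prod_{j}\bE e^{-s\chi_{nj}}$, $s\geq 0$. Convergence in distribution of nonnegative variables is equivalent to pointwise convergence of Laplace transforms to a function continuous at $0^+$. Writing $\bE e^{-s\chi_{nj}}=1-(1-\bE e^{-s\chi_{nj}})$, the u.a.n.\ property (\ref{uan1}), applied to Laplace rather than Fourier transforms, gives
\[
\max_j\bigl(1-\bE e^{-s\chi_{nj}}\bigr)\rightarrow 0 .
\]
The expansion $\log(1-u)=-u+O(u^2)$ then yields
\[
-\log L_n(s)=\bigl(1+o(1)\bigr)\sum_j \bE\bigl(1-e^{-s\chi_{nj}}\bigr),
\]
because $\sum_j u_j^2\leq \max_j u_j\sum_j u_j$. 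So $W_n$ converges if and only if the quantity $\Lambda_n(s):=\int_{(0,\infty)}(1-e^{-sx})\,M_n(dx)$ converges, where $M_n:=\sum_j F_{nj}$ restricted to $(0,\infty)$.

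For sufficiency, I would split the integral at $\epsilon$, chosen to be a continuity point of $\nu$. On $[\epsilon,\infty)$ the integrand $1-e^{-sx}$ is bounded and continuous, and C1 gives vague convergence of $M_n$ to $\nu$ away from $0$. An integration by parts, $\int_{[\epsilon,\infty)}(1-e^{-sx})M_n(dx)=(1-e^{-s\epsilon})M_n[\epsilon,\infty)+\int_\epsilon^\infty se^{-sx}M_n[x,\infty)\,dx$, together with dominated convergence (C1 forces $M_n[x,\infty)$ to be bounded for $x\geq\epsilon$), gives the limit $\int_{[\epsilon,\infty)}(1-e^{-sx})\nu(dx)$. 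On $(0,\epsilon)$ I would use $sx-\tfrac12 s^2x^2\leq 1-e^{-sx}\leq sx$. This makes that piece equal to $s\sum_j\bE[\chi_{nj}\ind(\chi_{nj}<\epsilon)]$ up to an error of at most $s^2\epsilon$ times the same sum. Letting $n\to\infty$ and then $\epsilon\to 0$ along continuity points, C2 gives $\gamma s$, while $\int_{(0,\epsilon)}(1-e^{-sx})\nu(dx)\to0$ because $\int (x\wedge1)\,\nu(dx)<\infty$. The limit is therefore $\gamma s+\int(1-e^{-sx})\nu(dx)$. This is continuous in $s$, so the limit law has the stated transform, and hence the stated characteristic function.

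For necessity, I would assume $W_n\Rightarrow W$, so that $\Lambda_n(s)\to\Lambda(s)$ for all $s\geq0$. The first step is tightness of $\{M_n\}$ away from $0$. For $x\geq \epsilon$ we have $1-e^{-x}\geq c_\epsilon>0$, so $M_n[\epsilon,\infty)\leq \Lambda_n(1)/c_\epsilon$. In addition, $\Lambda_n(s)\to\Lambda(s)\to 0$ as $s\downarrow0$ controls the mass escaping to infinity, and $\int_{(0,1)}x\,M_n(dx)$ stays bounded. By Helly selection, along subsequences the measures $(1-e^{-x})M_n(dx)$ converge weakly to a finite measure $\kappa$ on $[0,\infty)$. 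Define $\nu(dx):=\kappa(dx)/(1-e^{-x})$ on $(0,\infty)$ and $\gamma:=\kappa\{0\}$. Then $\Lambda(s)=\gamma s+\int(1-e^{-sx})\nu(dx)$, since $(1-e^{-sx})/(1-e^{-x})\to s$ as $x\to0$. Uniqueness of this Lévy–Khintchine representation of $-\log\bE e^{-sW}$ shows that $\kappa$ does not depend on the subsequence, so the full sequence converges. Conditions C1 and C2 then follow by reading off the limit at continuity points and near $0$.

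The main obstacle is the necessity direction. Specifically, one must show that the mass $\kappa\{0\}$ obtained from the limit equals the double limit in C2, with the $\limsup$ and the $\liminf$ coinciding. This requires the same comparison of $1-e^{-sx}$ with $sx$ on $(0,\epsilon)$, run in reverse, and a careful choice of $\epsilon$ among continuity points of $\nu$.
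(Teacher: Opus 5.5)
The paper gives no proof of this statement. It is quoted from Fristedt and Gray and used as a black box in Proposition~\ref{prop:marginal}, so your Laplace-transform argument is a self-contained replacement for a citation rather than a variant of anything in the paper. It is the standard proof for nonnegative null arrays, and it goes through:
\begin{itemize}
\item the reduction of $-\log L_n(s)$ to $\Lambda_n(s)$;
\item for sufficiency, the integration by parts on $[\epsilon,\infty)$ together with $sx-\frac12 s^2x^2\le 1-e^{-sx}\le sx$ on $(0,\epsilon)$, applied to $\liminf_n$ and $\limsup_n$ separately;
\item for necessity, tightness of $\kappa_n:=(1-e^{-x})M_n(dx)$ via $(1-e^{-1})M_n[K,\infty)\le \Lambda_n(1/K)$, identification of subsequential limits through the bounded continuous function $(1-e^{-sx})/(1-e^{-x})$ (equal to $s$ at $0$), and uniqueness of $(\gamma,\nu)$.
\end{itemize}

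The step you call the main obstacle is actually short. Since $1\le x/(1-e^{-x})\le 1+\epsilon$ on $(0,\epsilon]$, you get $\kappa_n[0,\epsilon]\le\sum_j\bE[\chi_{nj}\ind(\chi_{nj}\le\epsilon)]\le(1+\epsilon)\kappa_n[0,\epsilon]$. So at continuity points of $\kappa$, both $\liminf_n$ and $\limsup_n$ of the truncated sums lie in $[\kappa[0,\epsilon],(1+\epsilon)\kappa[0,\epsilon]]$, which shrinks to $\kappa\{0\}$. Both quantities are nondecreasing in $\epsilon$, so their limits as $\epsilon\downarrow0$ can be computed along continuity points.

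What writing the proof out buys over the bare citation is that it shows what is actually true. This differs from the displayed statement in three places.
\begin{itemize}
\item Your necessity argument yields $\gamma=\kappa\{0\}\ge 0$, and that is right. With ``$\gamma>0$'' the necessity claim is false (take $\chi_{nj}\equiv 0$), and the paper itself applies the theorem with $\gamma=0$ in Proposition~\ref{prop:marginal}.
\item The limit law has Laplace transform $\exp\bigl(-\gamma s-\int_0^\infty(1-e^{-sx})\nu(dx)\bigr)$, i.e.\ characteristic function $\exp\bigl(i\gamma\theta+\int_0^\infty(e^{i\theta x}-1)\nu(dx)\bigr)$. The displayed formula lacks the factor $i$, so your closing ``hence the stated characteristic function'' should be read with that correction.
\item In the sufficiency part you use $\int(x\wedge 1)\,\nu(dx)<\infty$, which is stronger than the general L\'evy-measure condition accompanying (\ref{psi_def}). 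Either take it as the definition, as the paper later does for measures on $(0,\infty)$, or derive it from C1 and C2. For continuity points $0<\delta<\epsilon$, $\int_{[\delta,\epsilon)}x\,\nu(dx)=\lim_n\int_{[\delta,\epsilon)}x\,M_n(dx)\le\liminf_n\sum_j\bE[\chi_{nj}\ind(\chi_{nj}\le\epsilon)]$. The right side is finite for small $\epsilon$; then let $\delta\downarrow 0$.
\end{itemize}
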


We will discuss sufficient conditions for the array $\{\zeta_{nj}(t)\}$ to be u.a.n. (for
each value of $t$) and for the sequence of processes defined in (\ref{Xnt1}) to converge
in distribution to a limiting process.
Since we assume for the sake of simplicity that all sources have the same ON rate, $\mu$,
the triangular array of input processes $\{\zeta_{nj}\}$ is described by a corresponding
triangular array of parameters $(\lambda_{nj}, r_{nj})$, $j=1\ldots n$, $n\in \mathbb{N}$.
Let $\delta_x$ denote the Dirac measure on the real line placing a unit mass at
$x \in \mathbb{R}$, defined by $\delta_{x}(A)=1$ if $x\in A$ and $0$ otherwise
for every Borel $A\subset \mathbb{R}$. We will describe the limiting behavior
of the input process (\ref{Xnt1}) as $n\rightarrow \infty$ by considering the
corresponding behavior of the sequence of measures
\begin{equation}  \label{seqmeas2}
	\nu_n (\cdot ) = \sum_{j=1}^n \lambda_{nj}\delta_{r_{nj}}(\cdot).
\end{equation}

We will show that, when the sequence of measures
$\{\nu_n\}$ converges weakly to a measure
$\nu$ on $(0,\infty)$, under
some additional assumptions, the sequence of processes $\{X_n\}$ defined
in (\ref{Xnt1}) converges
weakly to a limit process $X$ of the GCID type defined in Theorem
\ref{basic-theorem-H}.

Suppose that $\nu$ is a $\sigma-$finite measure on $(0,\infty)$ which satisfies the condition
$\int_0^\infty (x \wedge 1) \nu(dx) < \infty$ and thus is a L\'evy measure on $(0,\infty)$.
In particular $\nu[x,\infty) \,<\,\infty$ for each $x>0$.

We assume that the array $\{(\lambda_{nj},r_{nj})\}$ satisfies the conditions
\begin{eqnarray*}
(A.1) && \;\underset{1\leq j\leq n}{\max } \lambda_{nj}
\rightarrow 0 \;\; \mbox{ when $n\rightarrow \infty$}, \\
(A.2) &&\; \lim_{\epsilon \downarrow 0} \limsup_{n\rightarrow \infty} \sum_{j=1}^n \lambda_{nj}
r_{nj}\ind(r_{nj} \leq \epsilon) \;= \;0, \\
(A.3) && \;
       \sup_{n \in \mathbb{N}} \sum_{j=1}^n \lambda_{nj}r_{nj}^p \leq C ,
       \hspace{0.1in} \mbox{ for $p\in [1,4]$ and for some $C>0$}.
\end{eqnarray*}
In addition assume that the double array of parameters $\{(\lambda_{nj},r_{nj})\}$,
$j=1,\ldots,n$, $n\in \bN$, is such that the sequence of empirical measures $\{\nu_n\}$
converges weakly to $\nu$, i.e.
\[
(A.4) \hspace{0.08in} \nu_n[x,\infty) \,= \,\sum_{j=1}^n \lambda_{nj}\ind(r_{nj} \geq x)
\; \rightarrow \; \nu[x,\infty) \; \mbox{ as $n \rightarrow \infty$,} \;\mbox{ for every $x>0$
for which $\nu\{x\} =0$}.
\]
The next assumption concerns the measure $\nu$:
\begin{eqnarray*}
\hspace{-1.2in} (A.5) && \int_0^\infty x^q \nu(dx) \; < \; \infty \;\;\;\; q=1,2.
\end{eqnarray*}
Finally, we will assume that
\begin{eqnarray*}
\hspace{-1in} (A.6) && \hspace{0.2in} \int_{[x,\infty)}y \nu_n(dy)
\,= \,\sum_{j=1}^n \lambda_{nj}r_{nj}\ind(r_{nj} \geq x)
\; \rightarrow \; \int_{[x,\infty)}y\nu(dy) \;\; \mbox{ as $n \rightarrow \infty$,} \hspace{0.2in} \\
&& \hspace{3in}  \;\mbox{ for every $x >0$ for which $\nu\{x\} =0$}.
\end{eqnarray*}
\begin{proposition} \label{prop:empirical_L}
Assumptions (A.4) and (A.6) taken together imply the following
\[
(A.6b) \hspace{0.08in} \int_0^\infty \left( e^{i\theta y} -1 \right) \nu_n(dy) \,= \,
\sum_{j=1}^n \lambda_{nj} \left( e^{i\theta r_{nj}} -1 \right)
\; \rightarrow \; \int_0^\infty \left( e^{i\theta y} -1 \right) \nu(dy) \; \mbox{ as $n \rightarrow \infty$,}
\mbox{  $ \forall \theta \in \bR$}.
\]
\end{proposition}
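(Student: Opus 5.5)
We split the integral at a small level $\epsilon>0$ with $\nu\{\epsilon\}=0$ (such $\epsilon$ are dense, since $\nu$ has at most countably many atoms) and treat the two pieces separately:
\[
\int_0^\infty (e^{i\theta y}-1)\nu_n(dy) = \int_{(0,\epsilon)} (e^{i\theta y}-1)\nu_n(dy) + \int_{[\epsilon,\infty)} (e^{i\theta y}-1)\nu_n(dy),
\]
and similarly for $\nu$. Since $|e^{i\theta y}-1|\le |\theta| y$, the small-jump parts are bounded by $|\theta|\int_{(0,\epsilon)} y\,\nu_n(dy)$ and $|\theta|\int_{(0,\epsilon)}y\,\nu(dy)$. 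The second tends to $0$ as $\epsilon\downarrow 0$ by (A.5) (dominated convergence). For the first, write $\int_{(0,\epsilon)}y\,\nu_n(dy)=\int_{(0,\infty)}y\,\nu_n(dy)-\int_{[\epsilon,\infty)}y\,\nu_n(dy)$. The subtracted term converges to $\int_{[\epsilon,\infty)}y\,\nu(dy)$ by (A.6). The total mass $\int y\,\nu_n(dy)=\sum_j\lambda_{nj}r_{nj}$ is bounded by (A.3), but we need more: we need $\limsup_n$ of the small part to vanish as $\epsilon\to0$. Taking $\epsilon_0\to 0$ in (A.6) itself does not give this directly. Instead I would control it with (A.2), which states exactly $\lim_{\epsilon}\limsup_n\sum_j\lambda_{nj}r_{nj}\ind(r_{nj}\le\epsilon)=0$. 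Using $\ind(r<\epsilon)\le\ind(r\le\epsilon)$ settles the small-jump part. If one insists on using only (A.4) and (A.6), as the statement literally says, the uniform smallness has to come from some such tightness-at-zero condition. I would remark that (A.2) is among the standing assumptions and invoke it.

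For the large-jump part on $[\epsilon,\infty)$, I would first show that the restrictions of $y\,\nu_n(dy)$ to $[\epsilon,\infty)$ converge weakly, as finite measures, to the restriction of $y\,\nu(dy)$. By (A.6), the tail functions $x\mapsto\int_{[x,\infty)}y\,\nu_n(dy)$ converge at every $x\ge\epsilon$ that is a continuity point of $\nu$, including $x=\epsilon$, so the total masses converge. Convergence of distribution-type functions at a dense set of continuity points together with convergence of total mass gives weak convergence on $[\epsilon,\infty)$, with the boundary $\{\epsilon\}$ being $\nu$-null. Then I write $e^{i\theta y}-1=y\,g_\theta(y)$ with $g_\theta(y)=(e^{i\theta y}-1)/y$, which is bounded and continuous on $[\epsilon,\infty)$ (bounded by $2/\epsilon$). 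Hence $\int_{[\epsilon,\infty)}g_\theta(y)\,y\,\nu_n(dy)\to\int_{[\epsilon,\infty)}g_\theta(y)\,y\,\nu(dy)$. Assumption (A.4) alone would give the same conclusion via vague convergence of $\nu_n$ on $[\epsilon,\infty)$ plus convergence of the mass $\nu_n[\epsilon,\infty)$, since $e^{i\theta y}-1$ is bounded and continuous. Either route works, and I would present the (A.4) route as the primary one, because it uses only bounded test functions.

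Finally, the limit is assembled by a standard $3\epsilon$ argument. Take $\limsup_n$ of the difference $\bigl|\int(e^{i\theta y}-1)\,d\nu_n-\int(e^{i\theta y}-1)\,d\nu\bigr|$. The large-jump contribution vanishes, so the $\limsup$ is at most $|\theta|\bigl(\limsup_n\int_{(0,\epsilon)}y\,d\nu_n+\int_{(0,\epsilon)}y\,d\nu\bigr)$. Letting $\epsilon\downarrow0$ along continuity points finishes the proof. The main obstacle is the uniform control of the small jumps of $\nu_n$ near the origin, where weak convergence on $(0,\infty)$ gives no information. That is why this step must lean on (A.2), or equivalently on a uniform-integrability reading of (A.6) near zero.
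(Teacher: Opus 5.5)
Your proof is correct, and it takes a different and more careful route than the paper's.

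The paper's argument is a single step. It sets $\widetilde{\nu}_n(dy):=y\,\nu_n(dy)$ and asserts that (A.6) gives weak convergence $\widetilde{\nu}_n\to\widetilde{\nu}$, with $\widetilde{\nu}$ finite by (A.5). It then integrates the bounded continuous function $h_\theta(x)=(e^{i\theta x}-1)/x$, $h_\theta(0)=i\theta$.

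That weak-convergence claim tacitly needs the total masses to converge, $\sum_j\lambda_{nj}r_{nj}\to\int y\,\nu(dy)$, so that no mass of $\widetilde{\nu}_n$ piles up at the origin. But (A.6) controls tails only at $x>0$. Since $h_\theta(0)=i\theta\neq 0$, a mass $c$ escaping to $0$ adds a spurious drift $i\theta c$ to the limit. For instance, $\lambda_{nj}=r_{nj}=n^{-1/2}$ with $\nu=0$ satisfies (A.1) and (A.3)--(A.6), yet $\sum_j\lambda_{nj}(e^{i\theta r_{nj}}-1)=n^{1/2}(e^{i\theta n^{-1/2}}-1)\to i\theta$. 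So your diagnosis is right: the statement as written needs a tightness-at-zero condition, and (A.2) is exactly that condition. Once (A.2) is added, the paper's weak-convergence claim becomes true, because it supplies the missing mass convergence.

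Your truncation at a $\nu$-continuity level $\epsilon$ makes this dependence explicit, and it also uses fewer hypotheses. On $[\epsilon,\infty)$ you need only (A.4) with the bounded test function $e^{i\theta y}-1$. Near zero you need only (A.2) and $\int_{(0,1)}y\,\nu(dy)<\infty$, which already holds because $\nu$ is a L\'evy measure. Neither (A.5) nor (A.6) is required. The paper's route, patched with (A.2), is shorter but relies on (A.5) and (A.6). None of this affects the later results, since the main convergence theorem assumes all of (A.1)--(A.6).
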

\begin{proof}
Define the sequence of measures $\{\widetilde{\nu}_n\}$, $1,2,\ldots$ on $(0,\infty)$,
via $\widetilde{\nu}_n (dy) := y\nu_n(dy)$. These measures are {\em finite} and the
sequence $\{\widetilde{\nu}_n\}$ converges weakly to the measure $\widetilde{\nu}$ 
defined by $\widetilde{\nu}(dy):=y \nu(dy)$ as a result of (A.6). Further, $\widetilde{\nu}$ 
is finite as a result of (A.5). Define the family of functions $h_\theta:\bR \rightarrow \bC$,
indexed by $\theta\in \bR$,
\[
h_\theta(x) := \left\{ \begin{array}{cl} \frac{e^{i\theta x} -1}{x} & \mbox{ if $x \ne 0$} \\
i \theta  & \mbox{ if $x=0$} \end{array}\right. \;.
\]
It can be readily seen that, for each $\theta$, $h_\theta$ is continuous and bounded and
hence from Helly's second theorem
\[
\lim_{n\rightarrow \infty} \int_0^\infty h_\theta(y) \widetilde{\nu}_n(dy)  \;=\;
\int_0^\infty h_\theta(y) \widetilde{\nu}(dy).
\]
This is however equivalent to the convergence in (A.6b).
\end{proof}


An example of a double array of parameters and corresponding ON/OFF processes satisfying
Asssumptions A.1-A.6 and Conditions C.1 and C.2 is given in the Appendix.

The following proposition gives necessary and sufficient conditions for the convergence of
the row sums of the triangular array $\{\zeta_{nj}(t)\}$.
\begin{proposition} \label{prop:marginal}
Suppose that there exists a L\'evy measure $\nu$ on $(0,\infty)$ for which
the triangular array of parameters $(\lambda_{nj},r_{nj})$ satisfies Assumptions
A.1, A.2, and A.4. Then for each fixed $t$
\begin{equation}
X_n(t):=\sum_{j=1}^n \zeta_{nj}(t) \stackrel{d}{\rightarrow} X_t, \hspace{0.2in}
\mbox{ as $n\rightarrow \infty$.}
\end{equation}
$X_t$ is infinitely divisible with
\[
\log  \mathbb{E}[e^{i\theta X_t}] \;=\; \int_0^\infty \big(e^{i\theta x} - 1 \big) \frac{1}{\mu} \nu(dx).
\]
\end{proposition}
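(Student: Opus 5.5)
The plan is to apply Theorem~\ref{th:FrG} to the triangular array $\chi_{nj}:=\zeta_{nj}(t)$, which is nonnegative. By (\ref{zetanj}) each $\zeta_{nj}(t)$ takes the value $r_{nj}$ with probability $p_{nj}:=\lambda_{nj}/(\lambda_{nj}+\mu)$ and $0$ otherwise. By stationarity the law does not depend on $t$, so it suffices to work at a fixed $t$. We then have to check three things: the u.a.n.\ property, condition C1, and condition C2 with $\gamma=0$. Theorem~\ref{th:FrG} then gives convergence in distribution to a law with characteristic exponent $\int_0^\infty(e^{i\theta x}-1)\nu'(dx)$, where $\nu'$ is the limiting measure in C1. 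We will show that $\nu'=\frac{1}{\mu}\nu$.

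\textbf{Step 1 (u.a.n.).} From the explicit characteristic function,
\[
|\phi_{nj}(\theta)-1| = p_{nj}\,|e^{i\theta r_{nj}}-1| \leq 2p_{nj}\leq 2\lambda_{nj}/\mu .
\]
By (A.1), $\max_j |\phi_{nj}(\theta)-1|\to 0$ for every $\theta$, so the array is u.a.n.\ by (\ref{uan1}).

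\textbf{Step 2 (C1).} For $x>0$ with $\nu\{x\}=0$,
\[
\sum_{j=1}^n \bP(\zeta_{nj}(t)\geq x)=\sum_{j=1}^n \frac{\lambda_{nj}}{\lambda_{nj}+\mu}\ind(r_{nj}\geq x).
\]
Write $\frac{\lambda}{\lambda+\mu}=\frac{\lambda}{\mu}\cdot\frac{\mu}{\lambda+\mu}$, and note that $\frac{\mu}{\lambda_{nj}+\mu}\in[\frac{\mu}{\max_k\lambda_{nk}+\mu},1]$ uniformly in $j$. This lower bound tends to $1$ by (A.1). Hence the sum is squeezed between $c_n\cdot\frac1\mu\nu_n[x,\infty)$ and $\frac1\mu\nu_n[x,\infty)$ with $c_n\to1$. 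By (A.4) it converges to $\frac{1}{\mu}\nu[x,\infty)$, which is finite because $\nu$ is a L\'evy measure on $(0,\infty)$. Thus C1 holds with L\'evy measure $\frac1\mu\nu$. We also note that $\frac1\mu\nu$ is a L\'evy measure on $\bR^+$.

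\textbf{Step 3 (C2).} We have
\[
0\le \sum_j \bE[\zeta_{nj}(t)\ind(\zeta_{nj}(t)\le\epsilon)]=\sum_j p_{nj}r_{nj}\ind(r_{nj}\le\epsilon)\le \frac1\mu\sum_j\lambda_{nj}r_{nj}\ind(r_{nj}\le\epsilon).
\]
By (A.2), the $\limsup_n$ of the right side tends to $0$ as $\epsilon\downarrow0$. So both the limsup and liminf expressions in C2 equal $0$, giving $\gamma=0$.

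The theorem as quoted asks for $\gamma>0$. The main (minor) obstacle is to read it as $\gamma\ge0$, which is clearly the intended meaning since the drift-free case is included in Fristedt--Gray's result. With that reading, the limit has characteristic function $\exp\bigl(\int_0^\infty(e^{i\theta x}-1)\frac1\mu\nu(dx)\bigr)$, as claimed. The only delicate points are the uniform replacement of $p_{nj}$ by $\lambda_{nj}/\mu$ in Steps 2 and 3, which (A.1) handles, and restricting C1 to continuity points of $\nu$, exactly as in (A.4).
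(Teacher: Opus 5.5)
Your proposal is correct and follows essentially the same route as the paper. Both check the u.a.n.\ property via (\ref{uan1}) and (A.1), then verify conditions C1 (with L\'evy measure $\frac{1}{\mu}\nu$) and C2 (with $\gamma=0$) of Theorem~\ref{th:FrG}. The only cosmetic difference is in C1: you squeeze $\frac{\lambda_{nj}}{\lambda_{nj}+\mu}$ between $c_n\frac{\lambda_{nj}}{\mu}$ and $\frac{\lambda_{nj}}{\mu}$, whereas the paper subtracts the error term $\frac{\lambda_{nj}^2}{\mu(\lambda_{nj}+\mu)}$ and bounds it by $\frac{1}{\mu^2}\max_j\lambda_{nj}\,\nu_n[x,\infty)$; this is the same use of (A.1) and (A.4), and your reading $\gamma\ge 0$ matches the paper's implicit use of $\gamma=0$.
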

\begin{proof}
Since
\begin{equation} \label{on_off_null}
\max_{1\leq j \leq n} \left| \bE e^{i\theta \zeta_{nj}(t)} - 1\right| \;=\;
\max_{1\leq j \leq n} \frac{\lambda_{nj}}{\lambda_{nj} + \mu}
\left| e^{i\theta r_{nj}} -1 \right| \;\leq \; \frac{2}{\mu} \,
\left( \max_{1 \leq j \leq n} \lambda_{nj} \right) \rightarrow 0 \;\;
\mbox{ as $n\rightarrow \infty$}
\end{equation}
from Assumption A.1 and (\ref{uan1}) the array $\{\zeta_{nj}(t)\}$ is null for each fixed $t$.
For each $x >0$
\begin{equation}  \label{A0A2}
\sum_{j=1}^n \bP(\zeta_{nj}(t) \geq x) = \sum_{j=1}^n
\frac{\lambda_{nj}}{\lambda_{nj}+\mu} \ind(r_{nj} \geq x) =
\frac{1}{\mu} \sum_{j=1}^n \lambda_{nj}  \ind(r_{nj} \geq x)
-  \sum_{j=1}^n \frac{\lambda_{nj}^2}{\mu(\lambda_{nj}+\mu)}  \ind(r_{nj} \geq x) .
\end{equation}
The second term in the right hand side of the last equation vanishes as $n \rightarrow \infty$ because
\[
0 \leq \sum_{j=1}^n \frac{\lambda_{nj}^2}{\mu(\lambda_{nj}+\mu)}  \ind(r_{nj} \geq x)
\leq \left( \max_{1\leq j \leq n} \lambda_{nj} \right) \frac{1}{\mu^2} \sum_{j=1}^n \lambda_{nj} \ind(r_{nj} \geq x)
\rightarrow 0 \;\;\; \mbox{for all $x >0$}
\]
by Assumption A.1 and A.4.
Hence, again by Assumption A.4,
\[
\lim_{n\rightarrow \infty} \sum_{j=1}^n \bP(\zeta_{nj}(t) \geq x) =  \frac{1}{\mu} \nu[x,\infty)
\]
where $\frac{1}{\mu} \nu(\cdot)$ is a L\'evy measure on $(0,\infty)$. Also,
\[
\sum_{j=1}^n \bE[\zeta_{nj}(t) \ind(\zeta_{nj} (t) \leq \epsilon]
 \;=\;  \sum_{j=1}^n \frac{\lambda_{nj}}{\lambda_{nj}+\mu} r_{nj} \ind(r_{nj} \leq \epsilon)
 \; \leq \;  \frac{1}{\mu} \sum_{j=1}^n \lambda_{nj} r_{nj}  \ind(r_{nj} \leq \epsilon)
\]
and hence A.2 implies C.2 with $\gamma=0$.
Hence, as an immediate consequence of Theorem \ref{th:FrG},
\begin{equation} \label{chflimit}
\lim_{n\rightarrow \infty} \log \bE [e^{i \theta X_n(t) }]
= \frac{1}{\mu} \int_0^\infty \left(e^{i\theta x}-1 \right) \nu(dx)
\hspace{0.2in} \mbox{ for any $\theta \in \bR$}.
\end{equation}
\end{proof}

The following theorem in Billingsley \cite[p.142]{Billingsley_M} guarantees the weak
convergence of a family of processes in $\mathbb{D}[0,T]$.
\begin{theorem} \label{th:Billingsley}
Let $\{X_n(t); t \in[0,T]\}$ be a sequence of real-valued processes  and a process
$\{X(t); t\in[0,T]\}$ with sample paths in $\mathbb{D}[0,T]$, such that, for all
finite dimensional distributions
\begin{equation} \label{Billingsley-fidi}
\left(X_n(t_1),\ldots,X_n(t_k) \right) \stackrel{d}{\rightarrow} \left(X(t_1),\ldots,X(t_k)\right) \;\;\;
\mbox{for all $k\in \mathbb{N}$ and $t_i \in [0,T]$}
\end{equation}
 and further that
\begin{equation}  \label{tight-T}
X(T) - X(T-\delta) \; \stackrel{d}{\rightarrow} 0 \;\; \mbox{ as $\delta \rightarrow 0$}.
\end{equation}
	Finally, suppose that for $t_1 \leq t_2 \leq t_3 $ the inequality
\begin{equation}   \label{Billingsley-ineq}
		\bE\left[ \left\vert X_n (t_2)-X_n (t_1)\right\vert^\beta
		\left\vert X_n (t_3)-X_n (t_2)\right\vert^\beta \right] \leq K (t_3 -t_1)^{1+\alpha}
\end{equation}
hods for some $\alpha >0$, $\beta >1$, and $K>0$. Then $X_n \stackrel{d}{\rightarrow} X$ in
$\mathbb{D}[0,T]$.
\end{theorem}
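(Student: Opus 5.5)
This is the classical tightness criterion for $\mathbb{D}[0,T]$ (Billingsley, Theorem 13.5, in the setting of \cite{Billingsley_M}). I would prove it by the usual route: relative compactness of $\{X_n\}$ plus identification of the limit through the finite dimensional distributions.

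\emph{Identification of the limit.} Suppose a subsequence converges weakly in $\mathbb{D}[0,T]$ to some $Y$. For $t$ outside the at most countable set where $\bP(Y(t)\neq Y(t-))>0$, and always at $0$ and $T$, the projections $\pi_{t_1,\ldots,t_k}$ are continuous at $Y$ almost surely. Hence the fidis of $Y$ at such times coincide with those of $X$, by (\ref{Billingsley-fidi}). These times form a dense set containing $T$, and laws on $\mathbb{D}[0,T]$ are determined by fidis on such a set. Therefore $Y\stackrel{d}{=}X$, and it suffices to prove tightness.

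\emph{Tightness.} I would use the criterion in terms of the modulus
\[
w''_\delta(x)=\sup\{\min(|x(t_2)-x(t_1)|,|x(t_3)-x(t_2)|): t_1\le t_2\le t_3,\ t_3-t_1\le\delta\}.
\]
The criterion requires three things:
\begin{itemize}
\item[(i)] $X_n(0)$ is tight;
\item[(ii)] $\lim_{\delta\to0}\limsup_n \bP(w''_\delta(X_n)\ge\epsilon)=0$;
\item[(iii)] $\lim_{\delta\to0}\limsup_n\bP(|X_n(\delta)-X_n(0)|\ge\epsilon)=0$, together with the same statement for $|X_n(T)-X_n(T-\delta)|$.
\end{itemize}
Condition (i) follows from fidi convergence at $t=0$. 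For (iii), fidi convergence gives $\limsup_n\bP(|X_n(T)-X_n(T-\delta)|\ge\epsilon)\le\bP(|X(T)-X(T-\delta)|\ge\epsilon)$ (at continuity levels $\epsilon$). This tends to $0$ by (\ref{tight-T}). At the left endpoint the same argument applies, using right-continuity of the paths of $X$ at $0$.

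\emph{The main obstacle: condition (ii).} Here one needs a maximal inequality. Let $S_0,\ldots,S_m$ be random variables with
\[
\bE\big[|S_j-S_i|^\beta|S_k-S_j|^\beta\big]\le (u_{i+1}+\cdots+u_k)^{1+\alpha}
\]
for all $i\le j\le k$. Then Markov's inequality at the level $\lambda^{2\beta}$ gives the three-point bound
\[
\bP\big(\min(|S_j-S_i|,|S_k-S_j|)\ge\lambda\big)\le \lambda^{-2\beta}(u_{i+1}+\cdots+u_k)^{1+\alpha}.
\]
I would upgrade this to
\[
\bP\Big(\max_{i\le j\le k}\min(|S_j-S_i|,|S_k-S_j|)\ge\lambda\Big)\le C_{\alpha,\beta}\,\lambda^{-2\beta}(u_1+\cdots+u_m)^{1+\alpha}
\]
by Billingsley's induction on $m$. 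The induction splits the index set at the point where the partial sums of the $u$'s cross half the total. It then uses $1+\alpha>1$ so that the constants form a convergent geometric recursion. This combinatorial step is the delicate part.

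With this inequality, apply it with $u_l$ equal to the mesh lengths of a dyadic grid inside each block of length $2\delta$. Right-continuity of the paths lets the grid maximum approximate the supremum over the whole block. Cover $[0,T]$ by about $T/\delta$ overlapping blocks of length $2\delta$, and note that any triple with $t_3-t_1\le\delta$ lies within one block. Summing the bounds over blocks gives
\[
\bP(w''_\delta(X_n)\ge\epsilon)\le C\,(T/\delta)\,\epsilon^{-2\beta}K(2\delta)^{1+\alpha}=O(\delta^\alpha),
\]
uniformly in $n$. This establishes (ii), and with it tightness and the theorem.
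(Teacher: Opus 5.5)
\textbf{Gap: the tightness criterion you quote is not sufficient as stated.} Your route is Billingsley's own proof of his Theorem 13.5; the paper gives no proof and only cites it. Your identification of the limit through fidis, the endpoint conditions, and the blockwise use of the maximal inequality to bound $\bP(w''_\delta(X_n)\ge\epsilon)$ by $O(\delta^\alpha)$ uniformly in $n$ are all fine.

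The problem is condition (i). With the modulus $w''$, you cannot weaken the size condition to tightness of $X_n(0)$. Billingsley's $w''$-criterion (his Theorem 13.3) keeps the condition $\lim_{a\to\infty}\limsup_n \bP(\sup_{t\le T}|X_n(t)|\ge a)=0$, because $w''$ does not see a single isolated jump. Take $X_n=n\ind_{[T/2,T]}$. Then $X_n(0)=0$, $w''_\delta(X_n)=0$ for every $\delta$, and $X_n(\delta)-X_n(0)=X_n(T)-X_n(T-\delta)=0$ for $\delta<T/2$. It even satisfies the moment hypothesis trivially, since the product of the two increments is always $0$. Yet $\sup_t|X_n(t)|=n$, so the sequence is not tight in $\mathbb{D}[0,T]$. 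As written, your argument never controls the size of the paths, and fidi convergence at $t=0$ alone cannot supply it.

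\textbf{Repair, using tools you already have.} Put $W(x):=\sup_{0\le t\le T}\min\big(|x(t)-x(0)|,|x(T)-x(t)|\big)$. For each $t$, either $|x(t)-x(0)|\le W(x)$, or $|x(T)-x(t)|\le W(x)$ and then $|x(t)-x(0)|\le |x(T)-x(0)|+W(x)$. Hence
\[
\sup_{t\le T}|x(t)|\le |x(0)|+|x(T)-x(0)|+W(x).
\]
Apply your maximal inequality once, on dyadic grids of the whole interval $[0,T]$, and pass to the limit by right-continuity. This gives $\bP(W(X_n)>\lambda)\le C K\lambda^{-2\beta}T^{1+\alpha}$ uniformly in $n$. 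Since $X_n(0)$ and $X_n(T)$ are tight by fidi convergence, $\sup_t|X_n(t)|$ is tight, which is the missing condition. The example above shows that the information at $t=T$ is genuinely needed here.

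\textbf{A smaller point.} Billingsley's right-endpoint condition involves $x(T-)$, not $x(T)$. Your version is nevertheless sufficient. For $s\in[T-\delta,T)$, the triple $(T-\delta,s,T)$ gives $|x(s)-x(T-\delta)|\le w''_\delta(x)+|x(T)-x(T-\delta)|$. Letting $s\uparrow T$ yields $|x(T-)-x(T-\delta)|\le w''_\delta(x)+|x(T)-x(T-\delta)|$.
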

\begin{theorem}\label{th:array-convergence}
Suppose that there exists a L\'evy measure $\nu$ on $(0,\infty)$ for which
the triangular array of parameters $(\lambda_{nj},r_{nj})$ satisfies Assumptions
A.1 -- A.6. Then the row sums of the triangular array of processes 
$\{\zeta_{nj}(t); t\geq 0\}$ satisfy
\begin{equation}
X_n(t) :=\sum_{i=1}^n \zeta_{nj}(t) \stackrel{d}{\rightarrow} X(t)
\end{equation}
where $\{X(t) ;t\geq 0\}$ is an ESPC process (see Definition \ref{def:ESPC}) with finite
dimensional distributions given by
\begin{eqnarray}  \nonumber
&&\log \bE  \left[ e^{i\left(\theta_1 X(t_1)+\theta_2 X(t_2)+\cdots +\theta_m X(t_m )\right)}\right]
\; =\; \frac{1}{\mu}\sum_{1\leq j\leq k\leq m}\int_0^\infty
\left(e^{i\left( \theta_j+\cdots +\theta_k \right) x}-1\right) \, \nu(dx) \,  \\ \label{GOU}
&& \hspace{2.3in} \times \left(1-e^{-\mu (t_j-t_{j-1})}\right)
e^{-\mu (t_k-t_j)}\left(1-e^{-\mu(t_{k+1}-t_k)}\right).
\end{eqnarray}
In the above expression $t_0=-\infty$ and  $t_{m+1}=+\infty$ according to the standard convention
in this paper.
\end{theorem}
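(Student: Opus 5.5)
The plan is to follow the structure suggested by Theorem \ref{th:Billingsley}. First I will prove convergence of the finite dimensional distributions by computing, for a single stationary ON/OFF source, the joint characteristic function at $m$ time points to first order in its OFF rate. I will then verify (\ref{tight-T}) and the moment inequality (\ref{Billingsley-ineq}) with $\beta=2$ and $\alpha=1$. The limit is identified as the ESPC process of Definition \ref{def:ESPC}, with L\'evy measure $\frac1\mu\nu$. By the Remark following Theorem \ref{basic-theorem-H} (the case $H(t)=1-e^{-\mu t}$), it is a consistent GCID family with c\`adl\`ag version, so the limiting object exists.

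\emph{Finite dimensional distributions.} Fix $t_1<\cdots<t_m$ and one source $(\lambda,\mu,r)$. Classify the sample path by the set of indices $k$ with $\zeta(t_k)=r$.
\begin{itemize}
\item[(i)] Paths that are ON exactly on a contiguous block $\{j,\ldots,k\}$. Such a path has probability
\[
p_{jk}=\frac{\lambda}{\mu}\left(1-e^{-\mu(t_j-t_{j-1})}\right)e^{-\mu(t_k-t_j)}\left(1-e^{-\mu(t_{k+1}-t_k)}\right)+O(\lambda^2),
\]
uniformly for $\lambda\le1$. The factor $\lambda/(\lambda+\mu)$ is the stationary ON probability. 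Staying ON from $t_j$ to $t_k$ has probability $e^{-\mu(t_k-t_j)}+O(\lambda)$, since the only other route is an ON$\to$OFF$\to$ON loop, which costs a factor $\lambda$. Being OFF at $t_{k+1}$, and by reversibility at $t_{j-1}$, follows from (\ref{TransProb}) up to $O(\lambda)$.
\item[(ii)] Paths with two or more ON blocks. Each requires an additional OFF$\to$ON transition, so they have total probability $O(\lambda^2)$.
\end{itemize}
Combining the two cases gives
\[
\bE e^{i\sum_k\theta_k\zeta(t_k)}=1+\sum_{j\le k}p_{jk}\left(e^{i(\theta_j+\cdots+\theta_k)r}-1\right)+R,\qquad |R|\le K\lambda^2|\theta|r,
\]
using $|e^{isr}-1|\le |s|r$. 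Now take logarithms of the product over $j=1,\ldots,n$ and use $\log(1+z)=z+O(|z|^2)$ with $|z_{nj}|\le K\lambda_{nj}|\theta|r_{nj}$. All error terms are then bounded by
\[
K|\theta|\Big(\max_j\lambda_{nj}\Big)\sum_j\lambda_{nj}r_{nj}\le KC|\theta|\max_j\lambda_{nj}\to0
\]
by A.1 and A.3 with $p=1$. This bound avoids needing $\sum_j\lambda_{nj}$ to stay bounded, which matters because $\nu$ may be infinite. The main terms are $\frac1\mu c_{jk}\sum_j\lambda_{nj}(e^{is r_{nj}}-1)$, which converge to (\ref{GOU}) by Proposition \ref{prop:empirical_L}.

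\emph{Tightness.} Condition (\ref{tight-T}) follows from (\ref{Difference}): $X(T)-X(T-\delta)$ has characteristic function $\exp\{H(\delta)[\psi(\theta)+\psi(-\theta)]\}\to1$ because $H(\delta)=1-e^{-\mu\delta}\to0$. For (\ref{Billingsley-ineq}) write $D_j=\zeta_{nj}(t_2)-\zeta_{nj}(t_1)$ and $E_j=\zeta_{nj}(t_3)-\zeta_{nj}(t_2)$. These are independent across $j$ and have mean zero by stationarity. Expanding $\bE[(\sum_j D_j)^2(\sum_k E_k)^2]$, only the following terms survive:
\begin{itemize}
\item[(a)] $\bE D_j^2\,\bE E_k^2$ for $j\ne k$. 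Since
\[
\bE D_j^2=2r_j^2\frac{\lambda_j\mu}{(\lambda_j+\mu)^2}\left(1-e^{-(\lambda_j+\mu)h}\right)\le 2r_j^2\lambda_j h,
\]
these sum to at most $4\big(\sum_j\lambda_jr_j^2\big)^2(t_2-t_1)(t_3-t_2)\le 4C^2(t_3-t_1)^2$ by A.3 with $p=2$.
\item[(b)] $2(\bE D_jE_j)(\bE D_kE_k)$ for $j\ne k$, bounded the same way by Cauchy--Schwarz.
\item[(c)] The diagonal terms $\bE[D_j^2E_j^2]$.
\end{itemize}

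The step I expect to be the main obstacle is the diagonal bound (c), namely $\bE[D_j^2E_j^2]\le K r_j^4\lambda_j(t_3-t_1)^2$. The product $D_jE_j$ is nonzero only if the source jumps in both $(t_1,t_2]$ and $(t_2,t_3]$. Some OFF$\to$ON transition or initial ON state must be charged a factor $\lambda_j$; the stationary ON probability is $\le\lambda_j/\mu$. Each of the two required jumps has rate at most $\lambda_j+\mu\le 1+\mu$, giving the factor $(t_3-t_1)^2$. To make this rigorous I will condition on the state at $t_1$, treating the OFF and ON cases separately, and use the Markov property at the first jump time. Summing over $j$ with A.3 for $p=4$ then gives (\ref{Billingsley-ineq}) with $\beta=2$ and $\alpha=1$, and Theorem \ref{th:Billingsley} completes the proof.
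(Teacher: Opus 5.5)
Your argument is correct. It shares the paper's skeleton: a first-order expansion of each source's characteristic function, then logarithms and Proposition \ref{prop:empirical_L}, then Theorem \ref{th:Billingsley} with $\beta=2$, $\alpha=1$ and the same three-way split of the fourth moment. You obtain the single-source expansion by a genuinely different route, however.

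The paper expands $\prod_k\bigl(1+\xi_k(e^{ir\theta_k}-1)\bigr)$ into joint moments $\bE[\xi_{l_1}\cdots\xi_{l_k}]$ over all subsets. It approximates each by $\frac{\lambda}{\mu}e^{-\mu(t_{l_k}-t_{l_1})}$ (Proposition \ref{prop:boundB}). It then needs the inclusion--exclusion identity of Proposition \ref{prop:ai} to regroup into contiguous blocks.

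You instead classify by the ON-pattern at $t_1,\ldots,t_m$, read the block probabilities off (\ref{TransProb}), and discard non-contiguous patterns as $O(\lambda^2)$. This bypasses both appendix propositions and explains the coefficient. The quantity $\frac{\lambda}{\mu}(1-e^{-\mu(t_j-t_{j-1})})e^{-\mu(t_k-t_j)}(1-e^{-\mu(t_{k+1}-t_k)})$ is, to first order, the probability that one ON period starts in $(t_{j-1},t_j]$ and ends in $(t_k,t_{k+1}]$. That is exactly the $Y_{ij}$ picture of Section \ref{sec:mg1}. The paper's route, in exchange, isolates a reusable algebraic identity.

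In the tightness part there are two further differences. Your check of (\ref{tight-T}) via the characteristic function (\ref{Difference}) needs no second moments, whereas the paper uses Chebyshev and A.5. Cauchy--Schwarz also handles the terms $\bE[D_jE_j]\,\bE[D_kE_k]$ without (\ref{ineqL2}).

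The diagonal term you flag as the main obstacle is actually a two-line exact computation, as in Lemma \ref{lemma:ONOFF}. Only the patterns OFF--ON--OFF and ON--OFF--ON contribute. The Markov property with (\ref{TransProb}) then gives $\bE[D_j^2E_j^2]=\frac{\lambda_j\mu r_j^4}{(\lambda_j+\mu)^2}\bigl(1-e^{-(\lambda_j+\mu)(t_2-t_1)}\bigr)\bigl(1-e^{-(\lambda_j+\mu)(t_3-t_2)}\bigr)\le\frac{r_j^4}{4}\lambda_j\mu(t_3-t_1)^2$. No first-jump conditioning is needed.

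Two small repairs are needed.

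\emph{The $\log(1+z)$ step.} The bound $|z_{nj}|\le K\lambda_{nj}|\theta|r_{nj}$ alone does not yield your stated error. Add the cruder bound $|z_{nj}|\le 2\bP(\zeta_{nj}(t_k)=r_{nj}\text{ for some }k)\le 2m\lambda_{nj}/\mu$. By A.1 this gives $\max_j|z_{nj}|\le\frac12$ eventually. Combined with your bound, it gives $\sum_j|z_{nj}|^2\le K|\theta|\max_j\lambda_{nj}\sum_j\lambda_{nj}r_{nj}$.

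\emph{The c\`adl\`ag version.} The Remark after Theorem \ref{basic-theorem-H} only identifies the finite-dimensional distributions. It does not provide a c\`adl\`ag version of the limit, which Theorem \ref{th:Billingsley} presupposes. The paper is silent on this point too; the version has to come from the moment bounds and tightness, not from that Remark.
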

\begin{proof}
The convergence of the marginal distributions has already been proved (under weaker assumptions) 
in Proposition (\ref{prop:marginal}). We will split the proof in two parts.

\paragraph{Part 1. Convergence of the finite dimensional distributions.}
We begin by determining the joint distribution of the process
$\{X_n(t) \}$ at times $t_1 <t_2 <\cdots <t_m$.
Set $\xi_{nj}(t) = \zeta_{nj}(t)/r_{nj}$ (so that the processes $\xi_{nj}(t)$ take
values in the set $\{0,1\}$).
The joint characteristic function is given by
\begin{eqnarray}  \nonumber
\bE  e^{i\sum_{k=1}^m \theta_k X_n (t_k)} &=& \bE
 e^{i \sum_{j=1}^n r_{nj} \sum_{k=1}^m \theta_k  \xi_{nj}(t_k)} \;=\; \bE  \prod_{j=1}^n
e^{i\sum_{k=1}^m r_{nj} \theta_k \xi_{nj}(t_k)} \\   \label{cf1}
&=& \prod_{j=1}^n \bE  e^{i\sum_{k=1}^m r_{nj} \theta_k \xi_{nj}(t_k)},
\end{eqnarray}
the last equation following from the independence of the sources.
Writing for the generic element of the array $\xi_k :=\xi_{nj}(t_k)$, $r$ in place $r_{nj}$, and
taking into account the fact that
$e^{i\theta_k r\xi_k} \;=\;  1+\xi_k(e^{ir\theta_k}-1)$
since $\xi_k$ takes only the values 0 and 1,
\begin{eqnarray} \nonumber
\bE  e^{i\sum_{k=1}^m  \theta_k \xi_{nj}(t_k)} &=& \bE \prod_{k=1}^m e^{i \theta r \xi_k}
\;=\;\bE \prod_{k=1}^m \left( 1+\xi_k (e^{ir\theta_k }-1)\right)  \\
&&\hspace{-1.3in} =\;1+\sum_{1\leq l_1 \leq m} \bE  \left[ \xi_{l_1}\right]
(e^{ir\theta_{l_1}}-1) \;+\;
\sum_{k=2}^m \hspace{-0.5in}\sum_{\hspace{0.6in}  1\leq l_1 < l_2 <\cdots < l_k \leq m}
\hspace{-0.6in} \bE  \left[ \xi_{l_1}\cdots \xi_{l_k}\right]
(e^{ir\theta_{l_1}}-1)\cdots (e^{ir\theta_{l_k}}-1).  \label{cf2}
\end{eqnarray}
Set
\begin{equation}  \label{notation}
\pi_{nj} := \frac{\lambda_{nj}}{\lambda_{nj} + \mu},\hspace{0.2in} \alpha_{nj}:=\lambda_{nj}+\mu.
\end{equation}
Then
\begin{eqnarray}
\bE  \xi_{nj}(t_1) &=& \pi_{nj}   \nonumber \\
 \nonumber
\bE  [\xi_{nj}(t_{l_1}) \cdots \xi_{nj}(t_{l_k})] &=& \pi_{nj} \left[ e^{-\alpha_{nj}(t_{l_2}-t_{l_1})}+\pi_{nj}
\left( 1-e^{-\alpha_{nj}(t_{l_2}-t_{l_1})}\right) \right] \times \cdots  \\
&&\hspace{0.5in}\times \left[ e^{-\alpha_{nj}(t_{l_k}-t_{l_{k-1}})}
+\pi_{nj}\left(1-e^{-\alpha_{nj}(t_{l_k}-t_{l_{k-1}})}\right) \right]    \label{estimate}
\end{eqnarray}
for $k=2,\ldots,m$. Setting
\begin{equation}  \label{fact3}
L_{nj}(k) := \bE  [\xi_{nj}(t_{l_1}) \cdots \xi_{nj}(t_{l_k})]
- \frac{\lambda_{nj}}{\mu}e^{-\mu (t_{l_k}-t_{l_1})}
\end{equation}
we have from Proposition \ref{prop:boundB}  of the Appendix the inequality
$0 \leq L_{nj}(k) \leq \lambda_{nj}^2 M$. Furthermore, it is also shown there that
(\ref{cf2}) can be written as
\begin{eqnarray} \nonumber
\bE  e^{i\sum_{k=1}^m \theta_k r_{nj} \xi_{nj}(t_k)} &=&
1+\frac{\lambda_{nj}}{\mu}  \sum_{1\leq l_1\leq m} (e^{ir\theta_{l_1 }}-1) \\ \nonumber
&& +\frac{\lambda_{nj}}{\mu} \sum_{k=2}^m \hspace{-0.6in}
\sum_{\hspace{0.7in} 1\leq l_1 <l_2 <\cdots <l_k \leq m}
\hspace{-0.55in} e^{-\mu (t_{l_k}-t_{l_1})}(e^{ir\theta_{l_1 }}-1)
\cdots (e^{ir\theta_{l_k}}-1)  \;+\; R_{nj}
 \\    \nonumber 
\end{eqnarray}
where the quantity $R_{nj}$ is defined in (\ref{ccf2}), satisfies (\ref{ineqccf2}), and
will turn out to be negligible.
Next, use the algebraic identity (\ref{Algebraic-Identity}) to rewrite the double
sum on the right hand side of the above equation and obtain
\begin{eqnarray*}
\bE  e^{i\sum_{k=1}^m \theta_k r_{nj} \xi_{nj}(t_k)}
&=& 1 \;+\; R_{nj} \;+\; \frac{\lambda_{nj}}{\mu}\sum_{u=1}^m(e^{ir_{nj}\theta_u}-1) \\
&& \hspace{-1.5in} +\frac{\lambda_{nj}}{\mu}\sum_{1\leq u<v\leq m}
\left( e^{ir_{nj}(\theta_u+\cdots +\theta_v)}-1\right) \left[ 1-e^{-\mu(t_u-t_{u-1})}\right]
e^{-\mu(t_v-t_u)}\left[ 1-e^{-\mu(t_{v+1}-t_v)}\right] \\
&=&  1 \;+\; R_{nj} \;+\; \frac{\lambda_{nj}}{\mu}\sum_{u=1}^m a_{nj}(u)
+  \frac{\lambda_{nj}}{\mu}\sum_{1\leq u<v\leq m} b_{nj}(u,v)
\end{eqnarray*}
with the obvious identifications for the terms $a_{nj}(u) $ and $b_{nj}(u,v)$. Further, set
$c_{nj}(u,v):= \sum_{u=1}^m a_{nj}(u) + \sum_{1\leq u<v\leq m} b_{nj}(u,v)$.
Then, for the process $X_n (t)=\sum_{j=1}^n r_{nj} \xi_{nj}(t)$ we have, in view of the above,
(\ref{cf1}), and the fact that the logarithms below are well defined for $n$ sufficiently large in view
of Assumption A.1 and (\ref{ineqccf2})
\begin{eqnarray*}
\log \bE  e^{i\sum_{k=1}^m \theta_k X_n (t_{k})}&=& \sum_{j=1}^n \log
\left( \bE  e^{i\sum_{k=1}^m \theta_k \xi_{nj}(t_k)}\right) \\
& =& \sum_{j=1}^n \log \Big( 1+ R_{nj}+  \frac{\lambda_{nj}}{\mu}c(u,v) \Big)
\;=\; \sum_{j=1}^n  \Big( \frac{\lambda_{nj}}{\mu}c_{nj}(u,v) + R_{nj} + \Xi_{nj}\Big)
\end{eqnarray*}
where $\Xi_{nj} =O\big( \lambda^2_{nj} r_{nj}^2\big)$.
Here we have used the inequality $| \log(1+z) - z | \leq |z|^2$ for $|z| \leq \frac{1}{2}$ 
whose use is justified because, by the triangle inequality, 
\[
|c_{nj}(u,v)|  \;\leq \; \sum_{u=1}^m |e^{r_{nj}\theta_ui}-1| 
+  \sum_{1\leq u<v\leq m} |e^{r_{nj}(\theta_u+\cdots+\theta_v)}-1| K_0 
\;\leq \; \min(K_1,r_{nj}K_2), 
\] 
(where $K_0$, $K_1$, $K_2$ are appropriate positive constants) and therefore in view 
of Assumption A.1, there exists $n_0$ such that
$
\max\limits_{1\leq j \leq n} \left| \frac{\lambda_{nj}}{\mu}c_{nj}(u,v) \right| < \frac{1}{2}.
$
Hence as $n \rightarrow \infty$
\begin{eqnarray*}
&&\hspace{-0.3in}\left\vert \log \bE  e^{i\sum_{k=1}^m \theta_k X_n (t_{k})}
 - \sum_{u=1}^m \sum_{j=1}^n \frac{\lambda_{nj}}{\mu}(e^{ir_{nj}\theta_{u}}-1)  \right. \\
&&\left. \hspace{-0.2in} - \hspace{-0.1in}
\sum_{1\leq u <v \leq m}\sum_{j=1}^n \frac{\lambda_{nj}}{\mu}\left( e^{ir_{nj}(\theta_u +\cdots +\theta_v)}-1\right)
\left[ 1-e^{-\mu(t_{u}-t_{u-1})}\right] e^{-\mu(t_v-t_u)}
\left[ 1-e^{-\mu(t_{v+1}-t_{v})}\right] \right\vert \\
&& \hspace{-0.2in} \; \leq \; \sum_{j=1}^n ( R_{nj} + \Xi_{nj} ) \leq
C  \sum_{j=1}^n \lambda_{nj}^2 r_{nj}  (1+r_{nj})  \leq
 C \Big(\max_{1\leq j \leq n} \lambda_{nj} \Big)\Big( \sum_{j=1}^n \lambda_{nj} r_{nj}  +
\sum_{j=1}^n \lambda_{nj} r_{nj} ^2 \Big) \rightarrow 0 .
\end{eqnarray*}
the last conclusion following from Assumptions A.1 and A.3.
Using the above in conjunction with Proposition \ref{prop:empirical_L} we obtain
\begin{eqnarray*}
\lim_{n\rightarrow \infty} \log \bE  e^{i\sum_{k=1}^m \theta_k X_n (t_{k})}
&  = & \sum_{u=1}^m \int_0^\infty (e^{ix\theta_u}-1)\frac{1}{\mu}\nu(dx) \\
&& \hspace{-1.6in} +\sum_{1\leq u<v\leq m}\left[ 1-e^{-\mu (t_{u}-t_{u-1})}\right]
e^{-\mu (t_v-t_u)}\left[ 1-e^{-\mu (t_{v+1}-t_v)}\right]
\int_0^\infty \left( e^{ix(\theta_u + \cdots + \theta_v)}-1\right) \frac{1}{\mu}\nu(dx).
\end{eqnarray*}
Absorbing the first sum into the second sum the right hand side above can be written as
\[
 \sum_{1\leq u \leq v\leq m}\left[ 1-e^{-\mu (t_u - t_{u-1})}\right]
e^{-\mu (t_v-t_u)}\left[ 1-e^{-\mu (t_{v+1}-t_v)}\right]
\int_0^\infty \left( e^{ix(\theta_u + \cdots + \theta_v)}-1\right)\frac{1}{\mu}\nu(dx)
\]
which is the right hand side of (\ref{GOU}).

\paragraph{Part 2. Tightness.}

Having established the convergence of the finite-dimensional distributions
we need to also establish the tightness of the family of processes in order
to establish convergence in distribution and complete the proof. This is
done by using Theorem \ref{th:Billingsley} with $\beta =2$ and showing 
that inequality (\ref{Billingsley-ineq}) holds for $\alpha=1$.
\begin{eqnarray}     \nonumber
&&\hspace{-0.28in}\bE  \left[ \left( X_n (t_2 )-X_n (t_1 )\right)^2 \left(
X_n (t_3 )-X_n (t_2 )\right)^2 \right]  
\;=\; \bE \left[ \left( \sum_{k=1}^n  \zeta_{nk}(t_2)-\zeta_{nk}(t_1) \right)^2
\left( \sum_{k=1}^n \zeta_{nk}(t_3)-\zeta_{nk}(t_2) \right)^2 \right] \\
\label{threeterms}
&&\hspace{-0.2in}=\;\bE  \left[ \sum_{k=1}^n \left( \zeta_{nk}(t_2 )-\zeta_{nk}(t_1)\right)^2
\left( \zeta_{nk}(t_3)-\zeta_{nk}(t_2)\right)^2 \right] \\  \nonumber
&&\hspace{-0.05in}
+\bE  \left[ \sum_{k\neq m}\left( \zeta_{nk}(t_2 )-\zeta_{nk}(t_1 )\right)^2
\left( \zeta_{nm}(t_3) - \zeta_{nm}(t_2 )\right)^2 \right] \\  \nonumber
&&\hspace{-0.05in}+2 \bE \hspace{-0.03in}
 \left[ \sum_{k\neq m} \hspace{-0.05in}\left( \zeta_{nk}(t_2)-\zeta_{nk}(t_1)\right) \hspace{-0.02in}
\left( \zeta_{nk}(t_3)-\zeta_{nk}(t_2)\right)\hspace{-0.02in}
\left(\zeta_{nm}(t_2 )-\zeta_{nm}(t_1)\right)\hspace{-0.02in}
\left( \zeta_{nm}(t_3)-\zeta_{nm}(t_2)\right) \right] .
\end{eqnarray}
(The above includes only the nonzero terms. For instance,
\[
\bE[\left( \zeta_{nj}(t_2)-\zeta_{nj}(t_1)\right) \hspace{-0.02in}
\left( \zeta_{nk}(t_3)-\zeta_{nk}(t_2)\right)\hspace{-0.02in}
\left(\zeta_{nl}(t_2 )-\zeta_{nl}(t_1)\right)\hspace{-0.02in} \left( \zeta_{nm}(t_3)-\zeta_{nm}(t_2)\right)]=0
\]
for terms for with $j \ne k$ since the sources are independent.)
For the first term in (\ref{threeterms}) we have, in view of (\ref{ineqL1}) of Lemma \ref{lemma:ONOFF}
in the Appendix, that
\begin{equation*}
\bE \left[ \sum_{k=1}^n \left( \zeta_{nk}(t_2)-\zeta_{nk}(t_1)\right)^2
\left(\zeta_{nk}(t_3)-\zeta_{nk}(t_2)\right)^2 \right] \; \leq \; \frac{\mu (t_3-t_1)^2}{4}\sum_{k=1}^n
\lambda_{nk}r_{nk}^4  .
\end{equation*}
For the second term in (\ref{threeterms}) we have that
\[
 \sum_{k\neq m} \bE \left[ \left( \zeta_{nk}(t_2)-\zeta_{nk}(t_1 )\right)^2 \right] \,
\bE \left[ \left( \zeta_{nm}(t_3 )-\zeta_{nm}(t_2 )\right)^2 \right]
\]
since the sources are independent. Also, from (\ref{ineqL3}),
\begin{eqnarray*}
\bE \left( \zeta_{nk}(t_2)-\zeta_{nk}(t_1 )\right)^2 \, \bE \left( \zeta_{nm}(t_3 )-\zeta_{nm}(t_2 )\right)^2
\leq && \hspace{-0.22in} 4\mu^2(t_3-t_2)(t_2-t_1) \frac{\lambda_{nk} r_{nk}^2}{\lambda_{nk}+\mu} \,
\frac{\lambda_{nm} r_{nm}^2}{\lambda_{nm}+\mu} \\
\leq &&\hspace{-0.22in} \mu^2(t_3-t_1)^2 \frac{\lambda_{nk} r_{nk}^2}{\lambda_{nk}+\mu} \,
\frac{\lambda_{nm} r_{nm}^2}{\lambda_{nm}+\mu}
\end{eqnarray*}
and thus second term in (\ref{threeterms}) is bounded by
\begin{eqnarray*}
\bE \left[ \sum_{k\neq m}\left( \zeta_{nk}(t_2)-\zeta_{nk}(t_1 )\right)^2
\left( \zeta_{nm}(t_3 )-\zeta_{nm}(t_2 )\right)^2 \right] &\leq &\mu^2(t_3-t_1)^2
\left( \sum_{k=1}^n \frac{\lambda_{nk} r_{nk}^2}{\lambda_{nk}+\mu} \right)^2 \\
&\leq& (t_3-t_1)^2 \left( \sum_{k=1}^n \lambda_{nk} r_{nk}^2 \right)^2 .
\end{eqnarray*}
For the third term of (\ref{threeterms}) we note that, because of the independence of sources, the fact
that $k \ne m$, and (\ref{ineqL2})
\begin{eqnarray*}
&& \Big| \bE \left[ \left( \zeta_{nk}(t_2)-\zeta_{nk}(t_1)\right)
\left( \zeta_{nk}(t_3)-\zeta_{nk}(t_2)\right)
\left(\zeta_{nm}(t_2 )-\zeta_{nm}(t_1)\right) \left( \zeta_{nm}(t_3)-\zeta_{nm}(t_2)\right) \right] \Big|  \\
&& \hspace{0.5in} \leq \; \frac{1}{16} \mu^2 \lambda_{nk} \lambda_{nm} r_{nk}^2 r_{nm}^2
(t_3-t_1)^4.
\end{eqnarray*}
Hence,
\begin{eqnarray*}
&&\hspace{-0.25in} 2 \bE \hspace{-0.03in} \left[ \sum_{k\neq m}
 \hspace{-0.03in}\left( \zeta_{nk}(t_2 )-\zeta_{nk}(t_1 )\right)
 \hspace{-0.03in}\left( \zeta_{nk}(t_3 )-\zeta_{nk}(t_2 )\right) \hspace{-0.03in}
\left( \zeta_{nm}(t_2 )-\zeta_{nm}(t_1 )\right) \hspace{-0.03in}
\left( \zeta_{nm}(t_3 )-\zeta_{nm}(t_2 )\right) \right] \\
&&\hspace{0.15in} \;\leq \;
\frac{\mu^2(t_3-t_1)^4}{8} \sum_{k \ne m} \lambda_{nk}r_{nk}^2 \lambda_{nm}r_{nm}^2
\; \leq \; \frac{\mu^2(t_3-t_1)^4}{8} \left( \sum_{k =1}^n  \lambda_{nk}r_{nk}^2 \right)^2.
\end{eqnarray*}
Therefore,
\begin{eqnarray*}
\bE  \left[ \left( X_n (t_2) - X_n (t_1)\right)^2
\left(X_n (t_3) - X_n (t_2)\right)^2 \right] &\leq &
\frac{\mu (t_3-t_1)^2}{4}\sum_{k=1}^n \lambda_{nk}r_{nk}^4  \\
&& \hspace{-2in} + (t_3-t_1)^2
\left( \sum_{k=1}^n \lambda_{nk} r_{nk}^2 \right)^2
 + \frac{\mu^2(t_3-t_1)^4}{8} \left( \sum_{k =1}^n  \lambda_{nk}r_{nk}^2 \right)^2.
\end{eqnarray*}
Assumption A.3 implies that
${\displaystyle \limsup_{n\rightarrow \infty}} \sum_{k=1}^n \lambda_{nk}r_{nk}^4  \leq C$ and
${\displaystyle \limsup_{n\rightarrow \infty}} \sum_{k=1}^n \lambda_{nk}r_{nk}^2 \leq C$
and hence, for some $K>0$,
\[
\bE \left[ \left( X_n (t_2) - X_n (t_1)\right)^2
\left(X_n (t_3) - X_n (t_2)\right)^2 \right] \leq K(t_3-t_1)^2 \;\;\; \mbox{for all $0<t_1<t_3 <T$.}
\]

Finally there remains to establish (\ref{tight-T}) and to this end it suffices to show that the limit process
$X$ satisfies the condition
$\bP(|X(T) - X(T-\delta)| > \epsilon) \rightarrow 0$ as $\delta \rightarrow 0$ for any
$\epsilon >0$. From Markov's inequality we have
\begin{equation} \label{Markov-psi}
\bP(|X(T) - X(T-\delta)| > \epsilon) \; \leq \;
\frac{\bE \left(X(T) - X(T-\delta)\right)^2}{\epsilon^2} \; \leq \;
\frac{1}{\epsilon^2} 2 H(\delta) \int_0^\infty x^2 \frac{1}{\mu} \nu(dx)
\end{equation}
where in the last inequality we have used (\ref{Difference}), (\ref{second-moment}),
together with the fact that 
$\psi(\theta) = \int_0^\infty  \left( e^{i\theta x} - 1 \right) \frac{1}{\mu}\nu(dx)$
and $\int_0^\infty x^2 \nu(dx)<\infty$ by virtue of Assumption A.5. Also, 
$H(\delta) = 1-e^{-\mu \delta}$. Since
$\lim_{\delta \rightarrow 0} H(\delta) =0$ from  (\ref{Markov-psi}) we obtain (\ref{tight-T}).
\end{proof}
The moment condition in Assumptions A.3 and A.5 are necessary in
establishing the tightness of the family of processes $\{X_n(t)\}$ because
of inequality (\ref{Billingsley-ineq}) in Theorem \ref{th:Billingsley}. In particular,
A.5 restricts the possible limit processes $\{X(t)\}$ to
those with finite first and second moments. It may be possible to circumvent this
restriction by using an alternative to (\ref{Billingsley-ineq}) which does not involve moments
(see for instance Theorem 13.5 in \cite[p.142]{Billingsley_M}) and also modify the proof
of part 2 of Theorem \ref{th:array-convergence} in order to relax these requirements.
We will not pursue this direction here.


\section{Further examples of GCID processes}

Here we discuss further the class of Gaussian GCIP processes and give a few additional
examples as well as examples of non-Gaussian processes.

\begin{proposition} \label{prop:gaussian}
Suppose that $\{X_t;t\in \bR \}$ is a GCID process with
$\psi(\theta) = i\mu \theta - \frac{1}{2}\sigma^2 \theta^2 $ and given $H$
satisfying the conditions of Theorem \ref{basic-theorem-H}.
Then $\{X_t\}$ is a stationary Gaussian process with covariance function
\begin{equation} \label{Gaussian-covariance}
\mbox{\sf Cov}(X_{t_i}, X_{t_j}) \;=\; \sigma^2 \left( 1 - H(t_j-t_i) \right), \;\;\;   t_i \leq t_j.
\end{equation}
If in addition $H$ satisfies
\begin{equation} \label{H0gaussian}
\displaystyle \lim_{h\downarrow 0} H(h) \, |\log h|^{1+\beta}\;=\;0 \;\;\;
\mbox{ for some } \beta >0
\end{equation}
then $\{X_t\}$ has a.s.\ continuous sample paths.
\end{proposition}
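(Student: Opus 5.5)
Three steps: identify the Gaussian finite-dimensional laws, read off the covariance, and derive sample-path continuity from a moment bound on increments.

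\emph{Gaussianity and covariance.} By Lemma \ref{le:drift}, the drift term contributes exactly $i\mu(\theta_1+\cdots+\theta_n)$. The remaining exponent is
\[
-\tfrac{\sigma^2}{2}\sum_{1\le i\le j\le n}(\theta_i+\cdots+\theta_j)^2 a_{ij},
\]
with $a_{ij}$ as in (\ref{aij}). By identity (\ref{quadratic}), or equivalently by Lemma \ref{le:multi_dimensional} with $\nu=0$, this equals $-\frac12\boldsymbol{\theta}^\top\boldsymbol{\Sigma}\boldsymbol{\theta}$, where
\[
\boldsymbol{\Sigma}_{kl}=\sigma^2\sum_{i\le k\wedge l,\; j\ge k\vee l}a_{ij}.
\]
So every finite-dimensional distribution is Gaussian with mean vector $\mu\mathbf{1}$, and the process is Gaussian; stationarity was already shown in Theorem \ref{basic-theorem-H}. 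To get the covariance, compute the double sum for $k<l$. The inner sums over $j$ telescope to $H(t_l-t_{i-1})-H(t_l-t_i)-1+1$, using $t_{n+1}=\infty$ and $H(\infty)=1$. The sum over $i\le k$ then telescopes to $H(\infty)-H(t_l-t_k)=1-H(t_l-t_k)$, using $t_0=-\infty$. This is the same computation as in the proof of Lemma \ref{le:drift}, and it gives (\ref{Gaussian-covariance}). As a consistency check, for $n=2$ one can also read it off directly from (\ref{property 1}).

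\emph{Continuity.} From (\ref{Difference}) and (\ref{second-moment}),
\[
X_{t+h}-X_t \sim N\bigl(0,\,2\sigma^2 H(|h|)\bigr).
\]
Hence $\bE|X_{t+h}-X_t|^2=2\sigma^2H(|h|)$, and the canonical metric is $d(s,t)=\sigma\sqrt{2H(|t-s|)}$. I would apply the classical sufficient condition for continuity of Gaussian processes (Fernique's criterion, or Dudley's entropy bound, in the form in Adler's or Marcus--Rosen's books):
\[
\int_0^{\epsilon}\frac{\bar d(h)}{h\sqrt{|\log h|}}\,dh<\infty,\qquad \bar d(h):=\sup_{|s-t|\le h}d(s,t).
\]
Since $H$ is concave with $H(0)=0$, it is nondecreasing near $0$, so $\bar d(h)=\sigma\sqrt{2H(h)}$. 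Condition (\ref{H0gaussian}) gives $H(h)\le |\log h|^{-1-\beta}$ for small $h$. The integrand is then bounded by
\[
C\,h^{-1}|\log h|^{-(1+\beta)/2-1/2}=C\,h^{-1}|\log h|^{-1-\beta/2},
\]
which is integrable near $0$ because $\beta>0$. So the integral converges and, on each compact interval, there is a version with continuous paths. A countable union of intervals gives continuity on all of $\bR$.

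Alternatively, one can run a direct Kolmogorov--Chentsov/chaining argument on dyadic points. Use Gaussian tail bounds $\bP(|X_{t+h}-X_t|>x)\le 2\exp(-x^2/(4\sigma^2H(h)))$, choose thresholds $x_m$ with $\sum x_m<\infty$, and apply Borel--Cantelli over the $2^m$ dyadic increments at level $m$.

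\emph{Main obstacle.} The expected main obstacle is this continuity step: the logarithmic modulus condition is too weak for the polynomial Kolmogorov criterion. It must be handled either by citing the Fernique/Dudley criterion, whose verification is the integral computation above, or by a careful dyadic chaining. In the chaining, choosing $x_m\asymp \sqrt{m\,H(2^{-m})}$ makes $\sum_m x_m\lesssim \sum_m m^{1/2}m^{-(1+\beta)/2}$. That series converges only because $\beta>0$ gives the exponent $-\beta/2$ on $m$ beyond $m^{-1}$, and checking this requires care with the exponents.
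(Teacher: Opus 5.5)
Your main argument is correct and is essentially the paper's proof. The paper likewise removes the drift with Lemma \ref{le:drift}, converts $\sum_{i\le j}a_{ij}(\theta_i+\cdots+\theta_j)^2$ into $\sum_{i,j}b_{ij}\theta_i\theta_j$ via (\ref{ba}) using the same double telescoping to get $b_{ij}=1-H(t_j-t_i)$, and then cites the Fernique-type integral criterion from Adler. Your version of that criterion, written with the standard deviation $\bar d(h)=\sigma\sqrt{2H(h)}$, is the correctly normalized one. The paper puts the variance $p(u)$ inside $\int_0^\delta(-\log u)^{1/2}dp(u)$, which is a slip. Your form yields exactly the sufficient condition (\ref{H0gaussian}).

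The one thing to fix is your fallback chaining and the ``main obstacle'' paragraph. Since $m^{1/2}m^{-(1+\beta)/2}=m^{-\beta/2}$, the series $\sum_m\sqrt{m\,H(2^{-m})}$ converges only for $\beta>2$, not for every $\beta>0$. Chaining through every dyadic level costs a factor $m$ relative to the Fernique sum $\sum_m\sqrt{H(2^{-m})/m}$, and that Fernique sum is where the exponent $-1-\beta/2$ actually comes from. A hands-on chaining valid for all $\beta>0$ has to skip levels. For example, pass only through $m_k=2^k$, controlling about $2^{m_{k+1}}$ increments of lag at most $2^{-m_k}$ at stage $k$; this gives $\sum_k 2^{-k\beta/2}<\infty$ and is Dudley's bound in disguise. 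Your primary argument does not depend on this alternative, so the proof stands.
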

\begin{proof}
From Theorem \ref{basic-theorem-H} and Lemma \ref{le:drift}
\begin{eqnarray}
\phi_n(\theta_1,\ldots,\theta_n;t_1,\ldots,t_n) \,=\, i \mu (\theta_1+\cdots+\theta_n) -
\frac{\sigma^2 }{2} \sum_{1 \leq i \leq j \leq n} (\theta_i+\cdots +\theta_j)^2 \, a_{ij}
\end{eqnarray}
with $a_{ij} \;=\; \left[ H(t_j-t_{i-1}) - H(t_j-t_i ) - H(t_{j+1}-t_{i-1}) + H(t_{j+1}-t_i )\right]$.
From (\ref{ba}), (\ref{ab}),
\[
 \sum_{1 \leq i \leq j \leq n} (\theta_i+\cdots +\theta_j)^2 \, a_{ij} \;=\;
 \sum_{1\leq i \leq n,\; 1 \leq j \leq n} \theta_i \theta_j b_{ij}
 \]
with
\begin{eqnarray*}
b_{ij} &=& \sum_{k=1}^{i} \sum_{l=j}^n
\left[ H(t_j-t_{i-1}) - H(t_j-t_i ) - H(t_{j+1}-t_{i-1}) + H(t_{j+1}-t_i )\right] \\
&=& 1 - H(t_j-t_i), \;\; \mbox{ when } i\leq j
\end{eqnarray*}
and $b_{ji}=b_{ij}$,
where in the above we use the telescopic nature of the sums and the convention $t_0 = -\infty$,
$t_{n+1} = +\infty$, $H(0)=0$, $H(+\infty) =1$. Thus
\begin{eqnarray}
\log \bE[e^{i \sum_{k=1}^n \theta_k X_{t_k}}] \;=\; i \mu \sum_{k=1}^n \theta_k  -
\frac{\sigma^2}{2} \sum_{i=k}^n \theta_k^2
- \sigma^2  \sum_{1\leq j < k \leq n} \theta_j \theta_k \left[1- H(t_k -t_j) \right].
\end{eqnarray}
Thus $\{X_t; \;t\in \bR \}$ is a stationary Gaussian process with covariance given by
(\ref{Gaussian-covariance}).

Let
\begin{equation} \label{pu}
p(u):= \sup_{0\leq t \leq u} \bE(X_t-X_0)^2 .
\end{equation}
A  necessary and sufficient condition for a stationary Gaussian process to have
continuous paths with probability 1 is
\[
\int_0^\delta (-\log u)^{1/2}dp(u) \;<\; \infty
\]
for some $\delta >0$. (See \cite[p.14-15]{Adler}.) A sufficient condition, easier
to check, is
\begin{equation} \label{pu2}
p(u) \; \leq \; \frac{C}{|\log u|^{1+\beta}}
\end{equation}
for some $C>0$ and $\beta>0$. Using (\ref{second-moment}) equation
(\ref{pu}) gives $p(u) = \sup_{0\leq t \leq u} \sigma^2 H(t)
= \sigma^2 H(u)$. Therefore (\ref{H0gaussian}) implies that (\ref{pu2}) holds
and hence that the sample paths of the process are a.s. continuous.
This completes the proof.
\end{proof}

The classical Ornstein--Uhlenbeck process is a Gaussian GCID process with
$H(t) = 1- e^{-\alpha t}$ (with $\alpha > 0$), satisfies (\ref{H0gaussian})
since $\lim_{h\downarrow 0}H(h)/h = \alpha$, and therefore has (as is well known)
continuous sample paths w.p.\ 1. On the other hand its GCID counterpart with
Poisson marginals,
the $M/M/\infty $ process, has sample paths that w.p. 1 are piecewise constant and
have jumps of size 1. Thus it is clear that path behavior depends both on the nature
of the marginal distributions and on the type of correlation structure which is
determined by $H$.

Another instance of a Gaussian GCID process which also satisfies the condition of
Proposition \ref{prop:gaussian} and has therefore a.s.\ continuous
sample paths is the following generalization of the Slepian-Shepp process
(see Slepian \cite{Slepian} and  Shepp \cite{Shepp1}) which
we discuss in the following
\begin{example} \label{ex:min}
{\rm Let  $H(x)=\min (x^\alpha,1)=x^\alpha \wedge 1$ where $\alpha \in (0,1]$ and
$\psi (\theta) = - \frac{1}{2}\sigma^2 \theta^2 $. Then (\ref{main}) gives
\begin{eqnarray}  \label{example2}
&& \hspace{-0.35in} \log \phi_n (\theta_1 ,\ldots ,\theta_n ;t_1 ,\ldots ,t_n )
\;=\; -\frac{1}{2}\sigma^2  \\
&& \hspace{-0.3in} \times \hspace{-0.15in} \sum_{1\leq i\leq j\leq n} \hspace{-0.1in}
(\theta_i +\cdots +\theta_j)^2 \Big[ \big[(t_j-t_{i-1})^\alpha \wedge 1\big]-
\big[(t_j-t_i )^\alpha \wedge 1\big]- \big[(t_{j+1}-t_{i-1})^\alpha \wedge 1\big]
+\big[(t_{j+1}-t_i )^\alpha \wedge 1\big]\Big]  . \notag
\end{eqnarray}
Then from Proposition \ref{prop:gaussian}, (\ref{example2}) can be written as
\begin{equation*}
-\frac{1}{2}\sigma^2 \left\{ \sum_{i=1}^n \theta_i ^2
+2\sum_{1\leq i < j\leq n}\theta_i \theta_j
\Big[ 1-\big[\left( t_j-t_i \right)^\alpha  \wedge 1\big]\Big] \right\}
\end{equation*}
and thus it represents a stationary Gaussian process with covariance function
$R(\tau ):=\text{\sf \small Cov}(X_t ,X_{t+\tau })$ $ = \sigma^2 (1- \tau^\alpha )^+$,
$\tau \geq 0$. This falls within the framework of Gaussian processes discussed in
Remark \ref{rem:gaussian}.
}
\end{example}
When $\alpha =1$ the process of Example \ref{ex:min} is the Slepian-Shepp Gaussian
process. If in (\ref{example2}) we take $\psi(\theta)= \rho (e^{i\theta} - 1)$ we
obtain the process with joint distributions
\begin{eqnarray}  \label{MMinfSS}
&& \hspace{-0.35in} \log \phi_n (\theta_1 ,\ldots ,\theta_n ;t_1 ,\ldots ,t_n )
\;=\;\hspace{-0.15in} \sum_{1\leq i\leq j\leq n} \hspace{-0.1in}
\rho\big(e^{i(\theta_i +\cdots +\theta_j)}-1\big) \\
&& \hspace{0.1in}   \times  \Big[ \big[(t_j-t_{i-1})^\alpha \wedge 1\big]-
\big[(t_j-t_i )^\alpha \wedge 1\big]- \big[(t_{j+1}-t_{i-1})^\alpha \wedge 1\big]
+\big[(t_{j+1}-t_i )^\alpha \wedge 1\big]\Big]  . \notag
\end{eqnarray}
When $\alpha=1$ the above is the $M/D/\infty$ process
with service times equal to 1. Indeed, we can check this by referring to section \ref{sec:mg1}
and taking the service time distribution in the $M/GI/\infty$ model to be
$G(x) = \ind(x \geq 1)$ which implies that $G_I(x) = x\wedge 1$ for $x>0$.

\begin{example}
{\rm We take again $H(x)=x^\alpha\wedge 1$ with $\alpha \in (0,1]$. However,
this time we choose $\psi(\theta)=-\log (1-i\theta)$. Then
\begin{eqnarray}  \label{GE2}
\log \phi_n(\theta_1,\ldots ,\theta_n;t_1,\ldots ,t_n) &=&-\sum_{1\leq i\leq
j\leq n}\log [1-i(\theta_i+\cdots +\theta_j)] \\
&& \hspace{-1.2in} \times \; \left[ (t_j-t_{i-1})^\alpha\wedge 1-(t_j-t_i)^\alpha\wedge
1-(t_{j+1}-t_{i-1})^\alpha\wedge 1+(t_{j+1}-t_i)^\alpha\wedge 1\right] .  \notag
\end{eqnarray}
This defines a process which has the same covariance structure as the
Gaussian process of Example \ref{ex:min} but has Gamma marginals. In particular,
in the case $\alpha=1$ and assuming $t_n-t_1\leq 1$, the right hand side of
(\ref{GE2}) reduces to
\begin{eqnarray*}
&& -\sum_{j=1}^{n-1}\log [1-i(\theta_1+\cdots +\theta_j)](t_{j+1}-t_j)\;
-\;\sum_{i=2}^n \log [1-i(\theta_i+\cdots +\theta_n)](t_i-t_{i-1})\; \\
&& \hspace{2in} -\;\log [1-i(\theta_1+\cdots +\theta_n)]\left[ 1-( t_n-t_1) \right] .
\end{eqnarray*}
The distribution
of $X_{t+h}-X_t $ can be obtained from (\ref{Difference}). Its
characteristic function is
\begin{equation*}
\exp \Big( H(h)\big[ -\log (1-i\theta )-\log (1+i\theta )\big] \Big)
=\left( \frac{1}{1+\theta^2 }\right) ^{H(h)}.
\end{equation*}
This is in fact the difference of two independent Gamma--distributed random
variables.

In particular, note that
\begin{eqnarray}  \nonumber
\phi(\theta_1,\theta_2) &=& \bE  \left[ e^{i\theta_1 X_{t_1}
+ i\theta_2X_{t_2}}\right]  \\  \label{expslepian}
&=& \left( \frac{1}{1-i (\theta_1+\theta_2)}\right)^{1-H(t_2-t_1)}
\left( \frac{1}{1-i\theta_1}\right)^{H(t_2-t_1)} \left(\frac{1}{1-i\theta_2}\right)^{H(t_2-t_1)}.
\end{eqnarray}
Thus (\ref{GE2}) defines a type of multidimensional exponential distribution. If the
distribution $H$ has finite support, say $H(t)=1$ for $t\geq \tau$, then $X_{t_1}$,
$X_{t_2}$ are independent for $t_2-t_1>\tau$ as can be seen from (\ref{expslepian}).}
\end{example}

\begin{exmpl}{\rm This is an illustration of Proposition \ref{prop:superposition}.
If in (\ref{main}) we take $\psi(\theta) = \frac{1}{2}\sigma^2 \theta^2$ and
$H(t):=  1 - \sum_{k=1}^K p_k e^{-\alpha_k t}$ where $\sigma >0$, $p_k>0$,
$\alpha_k >0$, $k=1,\ldots,K$, and $\sum_{k=1}^K p_k =1$, then the resulting process
is the superposition of $K$ independent (zero mean) stationary Ornstein-Uhlenbeck
processes, $\{Y^{(k)}_t;t\geq 0\}$, $k=1,\ldots,K$ where
$\bE[Y_{t+s}^{(k)} Y_{s}^{(k)}] = \sigma^2 p_k \, e^{-\alpha_k t}$,
$t>0$, $k=1,\ldots,K$.
}
\end{exmpl}

Unlike proposition 1 where the question of path continuity can be addressed based
on known results about Gaussian processes, in general the question of path continuity
for GCIDs not corresponding to already known processes is more difficult to settle.
It will be addressed in a future paper. A positive answer to the question
of path regularity may be obtained by a straightforward application of the Kolmogorov-Chentsov
criterion according to which, if a process $\{X_t\}$ satisfies the inequality
$\bE\big[ |X_{t_2}-X_{t_1} |^\alpha \, | X_{t_3}-X_{t_2}|^\alpha \big] \,\leq \,
|t_3-t_1|^{1+\beta}$ for $0<t_1<t_2<t_3$ and $\alpha>0$, $\beta>0$ when
$t_3$ is sufficiently small then w.p. 1 it possesses paths that have no discontinuities of the
second kind \cite{Chentsov}.
Assuming that the GCID process have finite fourth moments and furthermore, assuming
for the sake of simplicity that $\psi'(0)=\bE X_t =0$ we may obtain from (\ref{main})
\[
\bE\big[ (X_{t_2}-X_{t_1} )^2 \, ( X_{t_3}-X_{t_2})^2 \big] \,= \, \psi^{(4)}(0) a_0
+ \psi''(0)^2 \, \big[(a_0+a_1)(a_0+a_2) + 2 a_0^2\big]
\]
with $a_1:= H(t_3-t_1)+H(t_2-t_1)+H(t_3-t_2)$, $a_2:= H(t_3-t_1)+H(t_3-t_2)+H(t_2-t_1)$,
and $a_0:= H(t_2-t_1)+H(t_3-t_2)-H(t_3-t_1)$. In the special case where $H$ is twice
continuously differentiable in a neighborhood of the origin with $H'(0)= \ell>0$ and
$H''(0) =b<0$, there exists $h$ such that whenever $0<t<h$,
$H(t)=\ell t + \frac{1}{2}bt^2+ o(h^2)$. Then
\[
\bE\big[ (X_{t_2}-X_{t_1} )^2 \, ( X_{t_3}-X_{t_2})^2 \big] \,= \, \left(\frac{1}{8} b \psi^{(4)}(0)
+ 4 \ell^2\psi''(0)\right)(t_3-t_1)^2 + o(h^2)
\]
and thus Kolmogorov's criterion is satisfied with $\alpha=2$ and $\beta=1$.

\appendix
\section{Appendix}
\subsection{Bounds for ON/OFF Markovian Processes}

\begin{lemma}   \label{lemma:ONOFF}
\bigskip Let $\{\zeta_t \}$ be a Markovian ON/OFF source taking the values $0$ and $r$
assumed stationary, and suppose that $u<t<s$. Then,
\begin{eqnarray}  \label{ineqL1}
\mathbb{E}\left[ (\zeta_t -\zeta_u)^2 (\zeta_s-\zeta_t)^2 \right] &\leq &
\frac{r^4}{4}\lambda \mu (s-u)^2 , \\   \label{ineqL2}
\left\vert \mathbb{E}\left[ (\zeta_u -\zeta_t )(\zeta_t -\zeta_s) \right] \right\vert &\leq&
\frac{r^2}{4}\lambda \mu (u-s)^2 ,\\ \label{ineqL3}
\mathbb{E} \left[ (\zeta_t -\zeta_u)^2 \right] &\leq & \frac{2\lambda \mu r^2}{\lambda+\mu} (t-u).
\end{eqnarray}
\end{lemma}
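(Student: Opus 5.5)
We write $\zeta_t = r\,\xi_t$ with $\xi_t\in\{0,1\}$, and set $\pi:=\frac{\lambda}{\lambda+\mu}=\bP(\xi_t=1)$ and $\alpha:=\lambda+\mu$. From (\ref{TransProb}) the transition probabilities over a time lag $h\ge 0$ are
\[
p_{01}(h)=\pi\bigl(1-e^{-\alpha h}\bigr),\qquad p_{10}(h)=(1-\pi)\bigl(1-e^{-\alpha h}\bigr).
\]
The basic estimates we use throughout are $1-e^{-\alpha h}\le \alpha h$, hence $p_{01}(h)\le \lambda h$ and $p_{10}(h)\le \mu h$, together with the identity $\pi(1-\pi)\alpha=\frac{\lambda\mu}{\lambda+\mu}$. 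We also use the elementary inequality $(t-u)(s-t)\le \frac14 (s-u)^2$ for $u<t<s$. All three bounds then reduce to explicit two- or three-time computations for the stationary chain, after pulling out the appropriate power of $r$.

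For (\ref{ineqL3}), since $\xi$ takes values in $\{0,1\}$, $(\xi_t-\xi_u)^2$ is the indicator that $\xi_u\neq\xi_t$. By stationarity and the Markov property,
\[
\bE(\xi_t-\xi_u)^2=\pi\,p_{10}(t-u)+(1-\pi)\,p_{01}(t-u)=2\pi(1-\pi)\bigl(1-e^{-\alpha(t-u)}\bigr).
\]
This is at most $2\pi(1-\pi)\alpha(t-u)=\frac{2\lambda\mu}{\lambda+\mu}(t-u)$, and multiplying by $r^2$ gives (\ref{ineqL3}).

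For (\ref{ineqL1}), the product $(\xi_t-\xi_u)^2(\xi_s-\xi_t)^2$ is the indicator of the event $\{\xi_u=\xi_s\neq\xi_t\}$. By the Markov property at the intermediate time $t$, its probability equals
\[
\pi\,p_{10}(t-u)\,p_{01}(s-t)+(1-\pi)\,p_{01}(t-u)\,p_{10}(s-t).
\]
Applying the bounds on $p_{01}$ and $p_{10}$ to each factor, this is at most
\[
\bigl[\pi+(1-\pi)\bigr]\lambda\mu\,(t-u)(s-t)=\lambda\mu\,(t-u)(s-t)\le \tfrac14\lambda\mu\,(s-u)^2 .
\]
Multiplying by $r^4$ gives (\ref{ineqL1}).

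For (\ref{ineqL2}), we use the stationary covariance $c(h):=\mathsf{Cov}(\xi_a,\xi_{a+h})=\pi(1-\pi)e^{-\alpha h}$, which can be read off from (\ref{TransProb}). Then $\bE[\xi_a\xi_b]=\pi^2+c(|b-a|)$. Expanding the product and noting that the $\pi^2$ terms cancel, we get
\[
\bE[(\xi_u-\xi_t)(\xi_t-\xi_s)]=c(t-u)-c(s-u)-c(0)+c(s-t)=-\pi(1-\pi)\bigl(1-e^{-\alpha(t-u)}\bigr)\bigl(1-e^{-\alpha(s-t)}\bigr).
\]
In absolute value this is at most $\pi(1-\pi)\alpha^2(t-u)(s-t)=\frac{\lambda\mu\,\alpha}{\lambda+\mu}(t-u)(s-t)=\lambda\mu\,(t-u)(s-t)$, which is at most $\frac14\lambda\mu(s-u)^2$. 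Multiplying by $r^2$ gives (\ref{ineqL2}).

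The only step needing care is the factorisation in (\ref{ineqL2}): one must check that the four covariance terms combine into the product form. This works because $e^{-\alpha(s-u)}=e^{-\alpha(t-u)}e^{-\alpha(s-t)}$. Everything else is bookkeeping with the explicit transition matrix.
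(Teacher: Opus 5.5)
Your proof is correct. For (\ref{ineqL3}) and (\ref{ineqL1}) it is the paper's argument. The paper likewise writes $(\zeta_t-\zeta_u)^2(\zeta_s-\zeta_t)^2$ as $r^4$ times the indicator of $\{\zeta_u=\zeta_s\neq\zeta_t\}$. It evaluates that probability from (\ref{TransProb}) exactly as $\frac{\lambda\mu r^4}{(\lambda+\mu)^2}\bigl(1-e^{-(\lambda+\mu)(t-u)}\bigr)\bigl(1-e^{-(\lambda+\mu)(s-t)}\bigr)$, and then uses $1-e^{-x}\le x$ and $(t-u)(s-t)\le\frac14(s-u)^2$. Bounding factor by factor before summing, as you do, gives the same constant.

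The genuine difference is in (\ref{ineqL2}). The paper computes nothing new there. Because the increments take values in $\{-r,0,r\}$, one has $|(\zeta_u-\zeta_t)(\zeta_t-\zeta_s)|=r^{-2}(\zeta_u-\zeta_t)^2(\zeta_t-\zeta_s)^2$. So $|\bE[\cdot]|\le\bE|\cdot|$ reduces (\ref{ineqL2}) to (\ref{ineqL1}) in one line.

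You instead expand through the covariance $c(h)=\pi(1-\pi)e^{-\alpha h}$ and factor. This costs an extra computation, but it yields the exact value $-\pi(1-\pi)r^2\bigl(1-e^{-\alpha(t-u)}\bigr)\bigl(1-e^{-\alpha(s-t)}\bigr)$. It also uses only second-order structure. So the same argument bounds $|\bE[(X_u-X_t)(X_t-X_s)]|$ by $c(0)\alpha^2(t-u)(s-t)$ for any stationary process with covariance $c(0)e^{-\alpha|h|}$, whereas the paper's shortcut depends on the two-point state space.

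For the two-state source the two routes agree exactly. The product $(\zeta_u-\zeta_t)(\zeta_t-\zeta_s)$ is pathwise equal to $-r^{-2}(\zeta_u-\zeta_t)^2(\zeta_t-\zeta_s)^2$, so your exact value is $-r^{-2}$ times the expectation in (\ref{ineqL1}).
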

\begin{proof}
The product $(\zeta_t -\zeta_u)(\zeta_s-\zeta_t)$ is non zero only when
$\zeta_u = 0,\zeta_t =r,\zeta_s=0$, and $\zeta_u=r,\zeta_t=0,\zeta_s = r$.
In both cases the product is equal to $-r^2$. Then
\begin{eqnarray*}
\mathbb{E} \left[ (\zeta_t -\zeta_u )^2 (\zeta_s -\zeta_t )^2 \right] &=&
r^4\mathbb{P}(\zeta_u =0)\mathbb{P}(\zeta_t =r|\zeta_u =0)\mathbb{P}(\zeta_s =0|\zeta_t =r) \\
&& \hspace{0.3in} +\;
r^4\mathbb{P}(\zeta_{u}=r)\mathbb{P}(\zeta_t =0|\zeta_u =r)\mathbb{P}(\zeta_s =r|\zeta_t =0)
\end{eqnarray*}
and thus
\begin{equation*}
\mathbb{E} \left[ (\zeta_t -\zeta_u )^2 (\zeta_s -\zeta_t )^2 \right] =
\frac{\lambda \mu r^4 }{(\lambda +\mu )^2}(1-e^{-(\lambda +\mu)(t-u)})
(1-e^{-(\lambda +\mu )(s-t)}).
\end{equation*}
Taking into account the fact that
\begin{equation}  \label{ineqL4}
0\leq 1-e^{-(\lambda +\mu )x}\leq (\lambda +\mu )x \;\; \mbox{ when $x>0$ }
\end{equation}
we see that
\begin{equation*}
\mathbb{E} \left[ (\zeta_t -\zeta_u )^2 (\zeta_s -\zeta_t )^2 \right] \leq
\frac{r^4\lambda \mu }{(\lambda +\mu )^2 }(\lambda +\mu)^2 (t-u)(s-t) \leq
r^4\lambda \mu \frac{1}{4}(s-u)^2
\end{equation*}
where the last inequality follows from the fact that $(t-u)(s-t)\leq \frac{1}{4}(s-u)^2$.

Inequality (\ref{ineqL2}) is established by taking again into account the fact
that $\zeta_u -\zeta_t $ and $\zeta_t -\zeta_s $ take values in the set $\{-r,0,r\}$. Then,
by Jensen's inequality,
\begin{equation*}
\big\vert \mathbb{E} \left[ (\zeta_u -\zeta_t )(\zeta_t -\zeta_s )\right]
\big\vert \leq \mathbb{E} \left[ \left\vert \zeta_u -\zeta_t \right\vert
\left\vert \zeta_t -\zeta_s \right\vert \right] =r^{-2}
\mathbb{E} \left[(\zeta_u-\zeta_t)^2 (\zeta_t -\zeta_s)^2 \right]
\end{equation*}
(the last equality following from the fact that the process $\{\zeta_t\}$ takes values
in the set $\{-r,0,r\}$) and hence we can use (\ref{ineqL1}) to establish (\ref{ineqL2}).

The last inequality can be established by the same type of argument as the first by noting
that
\begin{eqnarray*}
\mathbb{E} \left[ (\zeta_t -\zeta_u )^2 \right] &=&
r^2\mathbb{P}(\zeta_u =0)\mathbb{P}(\zeta_t =r|\zeta_u =0)
+r^2\;\mathbb{P}(\zeta_{u}=1)\mathbb{P}(\zeta_t =0|\zeta_u =r) \\
&=& \frac{2\lambda \mu r^2}{(\lambda+\mu)^2}\,
\left( 1 -  e^{-(\lambda+\mu)(t-u)} \right).
\end{eqnarray*}
Using (\ref{ineqL4}) we establish (\ref{ineqL3}).
\end{proof}

\begin{proposition}   \label{prop:boundB}
There is $M>0$ such that, for all $k \in \{2,\ldots,n\}$ the quantity $L_{nj}(k)$ 
defined in (\ref{fact3}), satisfies the bound
\begin{equation} \label{estimate-bound}
0 \leq \bE  [\xi_{nj}(t_{l_1}) \cdots \xi_{nj}(t_{l_k})] -\frac{\lambda_{nj}}{\mu}  
e^{-\mu(t_{l_k}-t_{l_1})} \leq M \lambda^2_{nj}.
\end{equation}
Also, the quantity
\begin{eqnarray}  \nonumber
	R_{nj}&:=& \;\sum_{k=2}^m \hspace{0.1in}\sum_{1\leq l_1 < l_2 <\cdots < l_k \leq m}
	\hspace{-0.05in}\left( \bE  \left[ \xi_{l_1}\cdots \xi_{l_k}\right]
	-\frac{\lambda_{nj}}{\mu}  e^{-\mu(t_{l_k}-t_{l_1})} \right)
(e^{ir\theta_{l_1}}-1)\cdots (e^{ir\theta_{l_k}}-1) \\    \label{ccf2}
&& + \left( \pi_{nj} - \frac{\lambda_{nj}}{\mu} \right) \sum_{1\leq l_1 \leq m} 
\big(e^{ir\theta_{l_1} }-1 \big)
\end{eqnarray}
satisfies the inequality
\begin{equation}  \label{ineqccf2}
|R_{nj}| \leq  \lambda_{nj}^2 r_{nj} K \;\;\; \mbox{ for some $K>0$.}
\end{equation}
\end{proposition}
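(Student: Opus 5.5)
We start from the product formula (\ref{estimate}), written with $\pi:=\pi_{nj}=\lambda/(\lambda+\mu)$, $\alpha:=\alpha_{nj}=\lambda+\mu$ and $\lambda:=\lambda_{nj}$. Put $d_i:=t_{l_{i+1}}-t_{l_i}\geq 0$, so that $\sum_i d_i = t_{l_k}-t_{l_1}$. Each factor is
\[
p_i := e^{-\alpha d_i}+\pi\big(1-e^{-\alpha d_i}\big)\in[0,1].
\]
We compare $\pi\prod_i p_i$ with $\frac{\lambda}{\mu}\prod_i e^{-\mu d_i}$.

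\textbf{Lower bound.} First, $\pi \le \lambda/\mu$ and $e^{-\alpha d_i}\le e^{-\mu d_i}$. This alone goes the wrong way, so we expand differently:
\[
p_i = e^{-\mu d_i}\Big[e^{-\lambda d_i}+\pi\big(e^{\mu d_i}-e^{-\lambda d_i}\big)\Big].
\]
We need to know whether the bracket is $\ge \frac{\mu}{\lambda+\mu}$ raised to a suitable power. Since $\pi(\lambda+\mu)=\lambda$, the product $\pi\prod_i p_i$ is exactly the stationary probability of being ON at all $k$ times. Compare this with the probability of staying ON continuously from $t_{l_1}$ to $t_{l_k}$, which is $\pi e^{-\mu(t_{l_k}-t_{l_1})}$. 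Hence
\[
\bE[\cdots]\ \geq\ \pi e^{-\mu(t_{l_k}-t_{l_1})}.
\]
This differs from $\frac{\lambda}{\mu}e^{-\mu(\cdot)}$ by $\big(\pi-\frac{\lambda}{\mu}\big)e^{-\mu(\cdot)} = -\frac{\lambda^2}{\mu(\lambda+\mu)}e^{-\mu(\cdot)} = O(\lambda^2)$.

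So the sign claim $L_{nj}(k)\ge 0$ as stated is delicate. I would prove it by direct expansion for small $\lambda$:
\[
p_i = e^{-\mu d_i}+\lambda\Big(\tfrac{1-e^{-\mu d_i}}{\mu}-d_ie^{-\mu d_i}\Big)+O(\lambda^2).
\]
The first-order coefficient is $\ge 0$ because $e^{x}-1\ge x$. If that route fails, I would settle for the two-sided bound $|L_{nj}(k)|\le M\lambda^2$, which is all that is used later. I expect this sign and first-order bookkeeping to be the main obstacle.

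\textbf{Upper bound.} Using $p_i\le e^{-\alpha d_i}+\pi$, the product $\pi\prod_i p_i$ expands into $\pi e^{-\alpha\sum_i d_i}$ plus terms each containing at least one extra factor $\pi$, hence bounded by $2^{m}\pi^2$. Then
\[
\pi e^{-\alpha D}\le \frac{\lambda}{\mu}e^{-\mu D}, \qquad D:=t_{l_k}-t_{l_1}.
\]
Together with $\pi\le\lambda/\mu^{\,}$, this gives $L_{nj}(k)\le 2^m\lambda^2/\mu^2$. Thus $M=2^m/\mu^2$ works uniformly in $j$, $n$ and in the fixed times.

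\textbf{Bound on $R_{nj}$.} For the first sum, use $|L_{nj}(k)|\le M\lambda^2$ and $|e^{ir\theta}-1|\le \min(2,r|\theta|)$. In each product of $k$ factors, bound one factor by $r|\theta_{l_1}|$ and the others by $2$. Summing over at most $2^m$ index sets gives a bound $C\lambda^2 r$. For the last term, $|\pi-\lambda/\mu| = \frac{\lambda^2}{\mu(\lambda+\mu)}\le\lambda^2/\mu^2$, times $\sum_l r|\theta_l|$, again gives $O(\lambda^2 r)$. Altogether $|R_{nj}|\le K\lambda_{nj}^2r_{nj}$, with $K$ depending only on $m$, $\mu$ and $\max_l|\theta_l|$.
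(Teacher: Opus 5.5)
Your upper bound on $L_{nj}(k)$ is correct and is essentially the paper's argument: expand $\pi\prod_i p_i$, use $\pi e^{-\alpha D}\le\frac{\lambda}{\mu}e^{-\mu D}$, and bound the cross terms by $\pi^2$. Your estimate of $R_{nj}$ via $|e^{ir\theta}-1|\le 2\wedge r|\theta|$ is also correct and the same as the paper's. The gap is the sign claim $L_{nj}(k)\ge 0$, and the route you propose for it fails. Your first-order expansion of $p_i$ is right, but $L_{nj}(k)$ is itself of order $\lambda^2$. At that order the prefactor correction $\pi-\frac{\lambda}{\mu}=-\frac{\lambda^2}{\mu(\lambda+\mu)}$ contributes as well. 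Keeping it gives
\[
L_{nj}(k)=\frac{\lambda^2}{\mu^2}\,e^{-\mu D}\Big[\sum_i\big(e^{\mu d_i}-1-\mu d_i\big)-1\Big]+O(\lambda^3),
\]
which is negative when the gaps $d_i$ are small. So nonnegativity of the first-order coefficient does not suffice. In fact no argument can work. For $k=2$ one has exactly
\[
L=\frac{\lambda\mu}{(\lambda+\mu)^2}e^{-(\lambda+\mu)d}+\frac{\lambda^2}{(\lambda+\mu)^2}-\frac{\lambda}{\mu}e^{-\mu d}\;\longrightarrow\;-\frac{\lambda^2}{\mu(\lambda+\mu)}<0 \quad (d\downarrow 0),
\]
so the left inequality in (\ref{estimate-bound}) is false for closely spaced times. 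The paper's proof does not establish it either: it only derives the upper bound from the expansion and then appeals to ``elementary considerations''.

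What is true is the two-sided bound $|L_{nj}(k)|\le M\lambda_{nj}^2$, and that is all that is used. The estimate of $R_{nj}$ goes through the triangle inequality, so it, and hence Part 1 of Theorem \ref{th:array-convergence}, only needs $|L_{nj}(k)|$. Your coupling with the event that the source stays ON throughout $[t_{l_1},t_{l_k}]$ supplies exactly the missing lower half:
\[
L_{nj}(k)\ge -\frac{\lambda^2}{\mu(\lambda+\mu)}e^{-\mu D}\ge -\frac{\lambda^2}{\mu^2}.
\]
This is sharper than anything in the paper. So drop the first-order attempt and state the result as $|L_{nj}(k)|\le 2^m\lambda_{nj}^2/\mu^2$. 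With that, your bound $|R_{nj}|\le K\lambda_{nj}^2 r_{nj}$ goes through unchanged.
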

\begin{proof}
Expanding the product in (\ref{estimate}) we may obtain the bound
$\bE  [\xi_{nj}(t_{l_1}) \cdots \xi_{nj}(t_{l_k})] - \pi_{nj} e^{-\alpha_{nj}(t_{l_k}-t_{l_1})}
\leq  \pi_{nj}^2 M_1$ for some positive $M_1$. Hence, taking into account (\ref{notation}),
\[
\bE  [\xi_{nj}(t_{l_1}) \cdots \xi_{nj}(t_{l_k})] - \frac{\lambda_{nj}}{\mu}
e^{-\mu( t_{l_k}-t_{l_1})} \leq M_1 \pi_{nj}^2 - \pi_{nj} e^{-\mu(t_{l_k}-t_{l_1})}
\big( 1- e^{\lambda_{nj}(t_{l_k}-t_{l_1})} \big)
\]
Using the inequalities
$\pi_{nj}=\frac{\lambda_{nj}}{\mu+ \lambda_{nj}} \leq \frac{\lambda_{nj}}{\mu}$
and $0 \leq 1- e^{-\lambda_{nj} (t_{l_k}-t_{l_1})} \leq \lambda_{nj}(t_{l_k}-t_{l_1})$,
elementary considerations give the bound (\ref{estimate-bound}).

Use the triangular inequality in (\ref{ccf2}) and take into account (\ref{estimate-bound}).
Also use the inequality $\left| e^{i\theta r} - 1 \right| \leq 2 \wedge |\theta r|$  to write
$\left| (e^{ir\theta_{l_1}}-1)\cdots (e^{ir\theta_{l_k}}-1)\right| \leq r| \theta_{l_1}|  2^{k-1}$.
Hence the modulus of the first term in the right hand side of (\ref{ccf2}) is bounded by
\begin{eqnarray*} 
 \sum_{k=2}^m \hspace{0.1in}
\sum_{1\leq l_1 < l_2 <\cdots < l_k \leq m} M \lambda_{nj}^2r| \theta_{l_1}|  2^{k-1} \;=\;
\frac{| \theta_{l_1}|}{2}M \lambda_{nj}^2r \sum_{k=2}^m {m \choose k} 2^k   \leq
 \lambda_{nj}^2 r_{nj} \frac{| \theta_{l_1}|}{2}M 3^m
\end{eqnarray*}
where in the last inequality we have used the binomial theorem.
The modulus of the second term of (\ref{ccf2})
is bounded by
\[
\frac{\lambda_{nj}^2}{\mu^2} r \sum_{l_1=1}^m |\theta_{l_1}|.
\]
Combining these two terms (and recalling that $r$ is shorthand for $r_{nj}$) establishes 
the bound (\ref{ineqccf2}).
\end{proof}

\subsection{A algebraic identity}
\begin{proposition} \label{prop:ai} For any $m \in \bN$, $t_1<t_2<\cdots<t_m$, and
$\alpha,r$, $\theta_1,\ldots, \theta_m \in \bR$
\begin{eqnarray}  \label{Algebraic-Identity}
 \sum_{k=2}^m \hspace{-0.6in}\sum_{\hspace{0.7in} 1\leq l_1 <l_2 <\cdots <l_k \leq m}
\hspace{-0.55in} e^{-\alpha (t_{l_k}-t_{l_1})}(e^{ir\theta_{l_1 }}-1)
\cdots (e^{ir\theta_{l_k}}-1) && \\  \nonumber
&& \hspace{-3in} =
\sum_{1\leq u < v \leq m} \left( e^{ir_{nj}(\theta_u +\cdots +\theta_v)}-1\right)
\left[ 1-e^{-\alpha (t_{u}-t_{u-1})}\right] e^{-\alpha (t_v-t_u)}
\left[ 1-e^{-\alpha (t_{v+1}-t_{v})}\right]
\end{eqnarray}
\end{proposition}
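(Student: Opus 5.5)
The plan is to compare coefficients after a change of variables that turns both sides into polynomials. Put $z_l:=e^{ir\theta_l}-1$ for $l=1,\ldots,m$ (reading $r_{nj}$ on the right as $r$). Then for $u\leq v$
\[
e^{ir(\theta_u+\cdots+\theta_v)}-1=\prod_{l=u}^v(1+z_l)-1=\sum_{\emptyset\neq S\subseteq\{u,\ldots,v\}}\ \prod_{l\in S}z_l .
\]
After this substitution both sides of (\ref{Algebraic-Identity}) are multilinear polynomials in $z_1,\ldots,z_m$. The left side contains only the monomials $\prod_{l\in S}z_l$ with $|S|\geq 2$, and each such monomial has coefficient $e^{-\alpha(t_{\max S}-t_{\min S})}$. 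It therefore suffices to compute the coefficient of each monomial on the right.

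Write $a_{uv}:=[1-e^{-\alpha(t_u-t_{u-1})}]\,e^{-\alpha(t_v-t_u)}\,[1-e^{-\alpha(t_{v+1}-t_v)}]$, with $t_0=-\infty$ and $t_{m+1}=+\infty$. By (\ref{corelstructure}) with $\mu=\alpha$, this is exactly the coefficient (\ref{ab}) built from $b_{ij}=e^{-\alpha(t_j-t_i)}$.

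Fix $S$ with $|S|\geq2$, and let $k=\min S$ and $l=\max S$. The monomial $\prod_{s\in S}z_s$ occurs in the $(u,v)$ term on the right precisely when $u\leq k<l\leq v$, so the constraint $u<v$ holds automatically. Its coefficient is therefore $\sum_{u\leq k,\ v\geq l}a_{uv}$. By the inversion (\ref{ba}) of Proposition \ref{prop:key}, or directly by the double telescoping used in the proof of Lemma \ref{le:drift} with $H(t)=1-e^{-\alpha t}$, this sum equals $b_{kl}=e^{-\alpha(t_l-t_k)}$, which matches the left-hand side. The telescoping works because $a_{uv}$ is a second difference of $b_{uv}$ in $(u,v)$, and the boundary values vanish: $b_{0,\cdot}=0$ since $t_0=-\infty$, and $b_{\cdot,m+1}=0$ since $t_{m+1}=+\infty$.

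The main obstacle is the singleton monomials $z_l$. The left side has none, while on the right $z_l$ picks up $\sum_{u\leq l\leq v,\ u<v}a_{uv}=b_{ll}-a_{ll}=1-a_{ll}$. These singleton contributions must therefore be reconciled with the printed right-hand side. Already for $m=2$, the right side equals $e^{-\alpha(t_2-t_1)}(z_1+z_2+z_1z_2)$ while the left side is $e^{-\alpha(t_2-t_1)}z_1z_2$. My plan at this step is to isolate these terms explicitly: the identity holds exactly on the $|S|\geq2$ part, and the linear part is $\sum_u(1-a_{uu})z_u$. This is precisely what is absorbed in the proof of Theorem \ref{th:array-convergence}, where the separate sum $\sum_u z_u$ is added and combined with the $u<v$ terms into the $u\leq v$ form of (\ref{GOU}). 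I would state the verified coefficient identity in that form and flag that the diagonal/linear terms must be carried along for (\ref{Algebraic-Identity}) to be used correctly.
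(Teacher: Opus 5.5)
Your analysis is correct, and the discrepancy you found is real. The printed identity (\ref{Algebraic-Identity}) is false, and your $m=2$ check already shows it. Your coefficient computation gives agreement on every monomial $\prod_{s\in S}z_s$ with $|S|\ge 2$, and an excess $(1-a_{ll})z_l$ on each singleton. With your $z_u=e^{ir\theta_u}-1$, the correct version can be written compactly as
\[
\sum_{k=2}^m\sum_{1\le l_1<\cdots<l_k\le m}e^{-\alpha(t_{l_k}-t_{l_1})}z_{l_1}\cdots z_{l_k}
=\sum_{1\le u\le v\le m}a_{uv}\bigl(e^{ir(\theta_u+\cdots+\theta_v)}-1\bigr)-\sum_{u=1}^m z_u .
\]
One small caveat: the boundary values $b_{0,\cdot}=b_{\cdot,m+1}=0$ require $\alpha>0$, not an arbitrary real $\alpha$. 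In the application $\alpha=\mu>0$, so this is harmless.

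Your remark about Theorem \ref{th:array-convergence} is also right. Adding the $k=1$ sum $\sum_u z_u$ to the display above gives exactly $\sum_{u\le v}a_{uv}(e^{ir(\theta_u+\cdots+\theta_v)}-1)$, which is (\ref{GOU}). The paper's step ``absorbing the first sum into the second'' would literally require $a_{uu}=1$. In effect it silently compensates for the error in the identity, so (\ref{GOU}) is correct even though (\ref{Algebraic-Identity}) is not.

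Your route also differs from the paper's. The paper proceeds in four steps:
\begin{enumerate}
\item It groups the left side by the endpoints $(l_1,l_k)=(u,v)$.
\item It sums the interior factors to $e^{ir(\theta_{u+1}+\cdots+\theta_{v-1})}$.
\item It writes $z_uz_v e^{ir(\theta_{u+1}+\cdots+\theta_{v-1})}$ as the second difference $E(u,v)-E(u+1,v)-E(u,v-1)+E(u+1,v-1)$ of $E(u,v):=e^{ir(\theta_u+\cdots+\theta_v)}$.
\item It shifts indices to move that difference onto $e^{-\alpha(t_v-t_u)}$.
\end{enumerate}
The last step is where its proof breaks. For $v-u\le 2$, some of the shifted terms land on the diagonal $u=v$ or on an empty block ($E(u+1,u)=1$), and these are dropped. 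Its last line, $\sum_{u<v}a_{uv}E(u,v)$, does not even carry the $-1$ of the statement.

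Done carefully, the summation by parts yields
\[
\sum_{1\le u\le v\le m}a_{uv}E(u,v)-\sum_{u=1}^m e^{ir\theta_u}+\sum_{u=1}^{m-1}e^{-\alpha(t_{u+1}-t_u)} .
\]
Reading this off at $\theta=0$ gives $\sum_{u\le v}a_{uv}=m-\sum_{u=1}^{m-1}e^{-\alpha(t_{u+1}-t_u)}$, and substituting back produces the corrected display above.

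Your coefficient matching through the inversion (\ref{ba}) keeps this boundary bookkeeping inside the double telescoping, so nothing can be lost, and it exposes the singleton discrepancy immediately. The paper's endpoint grouping is more hands-on, but it fails precisely at the boundary.
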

\begin{proof}
The left hand side of (\ref{Algebraic-Identity}) can be writen as
\begin{equation}   \label{cf4}
\sum_{1\leq u<v\leq m}e^{-\alpha (t_v-t_u)}(e^{ir\theta_u}-1)(e^{ir\theta_v}-1)
\left(1+\sum_{k=1}^{v-u-1}\sum_{u<l_1 <\cdots <l_k<v}
(e^{ir\theta_{l_1}}-1)\cdots (e^{ir\theta_{l_k}}-1)\right).
\end{equation}
Using the identity
\[
\prod_{j=1}^n (1+x_j) \;=\; 1+ \sum_{k=1}^n \hspace{-0.6in}
\sum_{\hspace{0.7in} 1 \leq j_1 < j_2 < \cdots < j_k \leq n} 
\hspace{-0.55in}x_{j_1}x_{j_2}\cdots x_{j_k}
\]
with $x_j = e^{ir\theta_j} - 1$, $j=1,\ldots,n$, and $n=v-u-1$ in the term inside the 
parentheses in (\ref{cf4})
we can rewrite it as
\begin{equation}   \label{inclusion_exclusion_2}
1+\sum_{k=1}^{v-u-1}\sum_{u<l_1 <\cdots <l_k<v}(e^{ir\theta_{l_1}}-1)\cdots 
(e^{ir\theta_{l_k}}-1) \;=\; e^{ir(\theta_{u+1}+\theta_{u+2}+\cdots +\theta_{v-1})}.
\end{equation}
Thus substituting (\ref{inclusion_exclusion_2}) in (\ref{cf4}) and taking into account that
\begin{eqnarray*}
&&(e^{ir\theta_u}-1)(e^{ir\theta_v}-1)e^{ir(\theta_{u+1} + \cdots +\theta_{v-1})} \\
&&\hspace{1in} =e^{ir(\theta_u +\cdots +\theta_v)}-e^{ir(\theta_{u+1}+\cdots +\theta_v)}
-e^{ir(\theta_{u}+\cdots +\theta_{v-1})}+e^{ir(\theta_{u+1}+\cdots +\theta_{v-1})}
\end{eqnarray*}
we rewrite (\ref{cf4}) as
\begin{eqnarray}   \notag
&&\hspace{-0.3in}\sum_{1\leq u<v\leq m}e^{-\alpha (t_v-t_u)}
\left[ e^{ir(\theta_u+\cdots +\theta_v)}
-e^{ir(\theta_{u+1}+\cdots +\theta_v)}-e^{ir(\theta_u + \cdots + \theta_{v-1})}
+e^{ir(\theta_{u+1}+\cdots +\theta_{v-1})}\right]   \\   \notag
&&\hspace{-0.2in}=\sum_{1\leq u<v\leq m}e^{ir(\theta_u + \cdots + \theta_v)}
\left[e^{-\alpha (t_v-t_u)}-e^{-\alpha (t_{v+1}-t_u)}- e^{-\alpha (t_v-t_{u-1})}
+e^{-\alpha (t_{v+1}-t_{u-1})}\right]  \\  \notag
&&\hspace{-0.2in} =\sum_{1\leq u<v\leq m}e^{ir(\theta_u + \cdots +\theta_v)}
\left[1-e^{-\alpha (t_u-t_{u-1})}\right] e^{-\alpha (t_v-t_u)}\left[1-e^{-\alpha (t_{v+1}-t_v)}\right] .
\end{eqnarray}
This establishes (\ref{Algebraic-Identity}).
\end{proof}

\subsection{An example of a double array satisfying the assumptions of
Theorem \ref{th:array-convergence}}
Here we consider an example of a double array of stationary ON/OFF processes
$\{\zeta_{nj}(t)\}$ with corresponding parameter array $(\lambda_{nj},r_{nj})$ given by
given by $\lambda_{nj} = n^{-\alpha}$, $r_{nj} = b^{jn^{-\alpha}}$, $j=1,\ldots n$,
$n=1,2,\ldots,$ where $\alpha$ and $b$ are in $(0,1)$. We will show that the
sequence of measures
$\nu_n(\cdot) = \sum_{j=1}^n n^{-\alpha} \delta_{ b^{jn^{-\alpha}}}(\cdot)$
converges weakly to
\begin{equation} \label{limit_measure}
\nu(dx)  \;=\; \frac{1}{x\, \log(1/b)} \ind(0 < x \leq 1).
\end{equation}
This is a $\sigma$-finite L\'evy measure which satisfies the moment conditions of A.5.

Next we show that A.4 is satisfied (thus establishing the weak convergence of the
measures $\{\nu_n\}$ to $\nu$). Note that $1 < r_{nj} = b^{jn^{-\alpha}} < 1$
and hence it suffices to take $x \in (0,1)$.
\[
r_{nj} = b^{jn^{-\alpha}} \geq x  \;\;\Leftrightarrow \;\;
j \leq j^* := \left \lfloor \frac{\log(1/x)}{\log(1/b)} n^\alpha \right \rfloor
\]
and thus
\begin{eqnarray}   \nonumber
\sum_{j=1}^n \lambda_{nj} \ind(r_{ nj} \geq x)
= n^{-\alpha} \left \lfloor \frac{\log(1/x)}{\log(1/b)} n^\alpha \right \rfloor
\rightarrow \frac{\log(1/x)}{\log(1/b)} &=&  \int_x^1 \frac{1}{y\,\log(1/b)} dy \\
&=&  \nu[x,\infty)  \hspace{0.2in} \mbox{as $n\rightarrow \infty$}.  \label{explA2}
\end{eqnarray}

Similarly, A.6 is satisfied. Indeed
\begin{eqnarray}   \nonumber
\sum_{j=1}^n \lambda_{nj} r_{nj} \ind(r_{ nj} \geq x)
= \sum_{j=1}^{j^*} n^{-\alpha} b^{jn^{-\alpha}} &= &
\frac{b^{n^{-\alpha}} - b^{\left\lfloor \frac{\log(1/x)}{\log(1/b)} n^\alpha
\right\rfloor n^{-\alpha}+n^{-\alpha}}}{n^{\alpha}\left(1- b^{n^{-\alpha}}\right)}
\rightarrow   \frac{1- b^{\frac{\log(1/x)}{\log(1/b)} }}{\log(1/b)} =
\frac{1-x}{\log(1/b)}\\
 &=& \int_x^\infty y \nu(dy)  \hspace{0.2in} \mbox{as $n\rightarrow \infty$}. \label{explA3}
\end{eqnarray}
We point out that, as a result of the above,
\begin{eqnarray}  \nonumber
\lim_{n\rightarrow \infty} \sum_{j=1}^n \lambda_{nj} \left(e^{i\theta r_{nj}}-1 \right)
&=& \lim_{n\rightarrow \infty} n^{-\alpha} \sum_{j=1}^n
\left(e^{i\theta b^{jn^{-\alpha}}}-1 \right)
= \int_0^1 \left(e^{i\theta x} -1 \right) \frac{\log(1/b)}{x}dx,\\
&& \hspace{1in} \;\;\; \mbox{ uniformly w.r.t. $\theta$ on compact intervals of $\bR$.}
\label{empirical_example}
\end{eqnarray}

Next, we check that A.2 holds. This argument is similar to that used to show that A.4 holds.
\[
r_{nj}\leq \epsilon \;\;\mbox{ or } \;\;
b^{jn^{-\alpha}} \leq \epsilon \Longleftrightarrow
j \geq j^*:= \left\lceil \frac{\log(1/\epsilon)}{\log(1/b)} n^\alpha \right\rceil
\]
and thus
\[
\sum_{j=1}^n n^{-\alpha} b^{jn^{-\alpha}}\ind( b^{jn^{-\alpha}} \leq \epsilon )
=n^{-\alpha} \sum_{j=j^*}^n b^{jn^{-\alpha}}
= \frac{b^{\left\lceil \frac{\log(1/\epsilon)}{\log(1/b)} n^\alpha \right\rceil n^{-\alpha}}-
b^{n^{1-\alpha}+n^{-\alpha}}}{n^{\alpha}\left(1- b^{n^{-\alpha}}\right)} \rightarrow
\frac{b^{ \frac{\log(1/\epsilon)}{\log(1/b)} }}{\log(1/b)} \hspace{0.2in} \mbox{ as $n \rightarrow \infty$.}
\]
Since $b \in (0,1)$, $\lim_{\epsilon \rightarrow 0} b^{ \frac{\log(1/\epsilon)}{\log(1/b)}} =0$ from which
we conclude that A.2 is satisfied.

A.1 is obviously satisfied. To check A.3 note that
\[
\sum_{j=1}^n \lambda_{nj} r_{nj}^p = n^{-\alpha} \sum_{j=1}^n b^{pjn^{-\alpha} }=
\frac{b^{pn^{-\alpha}}\left(1- b^{pn^{1-\alpha}}\right)}{n^{\alpha}\left(1- b^{pn^{-\alpha}}\right)}
\rightarrow \frac{1}{p \log(1/b)} \hspace{0.2in} \mbox{ as $n \rightarrow \infty$.}
\]
Since for any $p>0$ the limit exists and is finite the sequence is bounded and thus A.3 is
satisfied.

We now turn to the double array of stationary processes $\{\zeta_{nj}(t)\}$ corresponding
to the array of parameters $(\lambda_{nj}, r_{nj})$ in the framework of section
\ref{sec:ONOFFlim}.

Fixing $t$ and omitting it from the notion it is easy to see that the array or random
variables $\{\zeta_{nj}\}$ satisfies C.1 with respect to the L\'evy measure
$\mu^{-1} \nu(dx)$ on $(0,\infty)$. Indeed, from (\ref{explA2}) it follows immediately that
\begin{eqnarray} \nonumber
 \sum_{j=1}^n \bP(\zeta_{nj} \geq x) &=& \sum_{j=1}^n
\frac{n^{-\alpha} }{n^{-\alpha} +\mu} \ind(b^{jn^{-\alpha}}\leq \epsilon)
 \rightarrow \frac{1}{\mu} \nu[x,\infty)
\end{eqnarray}

We will see that Condition C.2 is satisfied with $\gamma=0$. Indeed
\begin{eqnarray} \nonumber
\sum_{j=1}^n \bE(\zeta_{nj} \ind(\zeta_{nj}\leq \epsilon)] &=&
\sum_{j=1}^n
\frac{\lambda_{nj} r_{nj}}{\lambda_{nj}+\mu} \ind(r_{nj}\leq \epsilon)
= \sum_{j=1}^n \frac{n^{-\alpha} b^{jn^{-\alpha}}}{n^{-\alpha}+\mu}
 \ind(b^{jn^{-\alpha}} \leq \epsilon)  \\     \label{empirical_example_3}
 &=&
  \sum_{n^\alpha C \leq j \leq n} \frac{ n^{-\alpha}  b^{jn^{-\alpha}}}{n^{-\alpha}+\mu}
 =  \frac{b^C - b^{n^{1-\alpha}}}{n^\alpha(1-b^{n^{-\alpha}})} \, \frac{1}{n^{-\alpha} + \mu}
\end{eqnarray}
where $C := \frac{\log \epsilon}{\log b}$ and we have used the fact that
$ b^{jn^{-\alpha}} \leq \epsilon \Leftrightarrow j \geq n^\alpha C$.
Since $0< \alpha, b < 1$ the last term in (\ref{empirical_example_3}), as
$n\rightarrow \infty$ converges to $b^C/\log(1/b)= \frac{\epsilon}{\log(1/b)}$.
Thus C.2 is satisfied in this situation.


\begin{thebibliography}{99}
\bibliographystyle{APT}

\bibitem{Adler}
Adler R. J. (1990). {\em An introduction to continuity, extrema, and related topics
for general Gaussian processes,} IMS Lecture Notes Monogr. Ser., 12.

\bibitem{Applebaum}
Applebaum D. (2009). {\em L\'evy Processes and Stochastic Calculus,} 2nd edn.
Cambridge University Press, Cambridge.

\bibitem{BNS} Barndorff-Nielsen O. E. and N. Shephard. (2001). Non-Gaussian
Ornstein--Uhlenbeck-based models and some of their uses in financial
economics, \emph{J. Roy. Statist. Soc. Ser. B Statist. Methodol.} \textbf{63}
(2), 167--241.

\bibitem{BN}
Barndorff-Nielsen O. E. (2011). Stationary infinitely divisible processes,
{\em Brazilian Journal of Probability and Statistics,} {\bf 25} (3), 294--322.

\bibitem{Billingsley_M} Billingsley, P. (1999). \textit{Convergence of Probability
Measures,} 2nd edn.\ John Wiley, New York.

\bibitem{Chentsov}
Chentsov N. N. (1956).
Weak convergence of stochastic processes whose trajectories
have no discontinuities of the second kind and the ``heuristic''
approach to the Kolmogorov-Smirnov tests. {\em Theory of
Probability \& Its Applications,} {\bf 1}, 1, 140--144.

\bibitem{FristedtGrey}
Fristedt B. and L. Gray. (1997) {\em A Modern Approach to Probability
Theory,} Birkh\"auser, Boston.

\bibitem{Hall}
Hall P. (1988). {\em Introduction to the Theory of Coverage Processes}
John Wiley \& Sons, New York.

\bibitem{HeathResnickSamorodnitsky}
Heath D., S. Resnick, and G. Samorodnitsky. (1998).
Heavy Tails and Long Range Dependence in On/Off
Processes and Associated Fluid Models, {\em Mathematics
of Operations Research,} {\bf 23} (1), 145-165.

\bibitem{Jurek-Vervaat} Jurek, Z. J. and W. Vervaat (1983). An Integral
Representation for Selfdecomposable Banach Space Valued Random Variables.
\emph{Z. Wahrscheinlichkeitstheorie verw. Gebiete} \textbf{62}, 247--262.

\bibitem{KajTaqqu}
Kaj I.  and M. S. Taqqu. (2008).
Convergence to Fractional Brownian Motion
and to the Telecom Process: the Integral Representation Approach.
{\em Progress in Probability}, Vol. 60, 383--427, Birkh\"auser Verlag, Basel.

\bibitem{Kallenberg}
Kallenberg O. (2021). {\em Foundations of Modern Probability.} 3rd Edn, Springer,
New York.

\bibitem{Takis}
Konstantopoulos, T. and S. J. Lin. (1998). Macroscopic models for long-range
dependent network traffic. {\em Queueing Systems Theory Appl.}
{\bf 28}, 215--243.


\bibitem{Lee}
Lee P. M. (1967). Infinitely divisible stochastic processes
{\em Zeitschrift f\"ur Wahrscheinlichkeitstheorie und Verwandte Gebiete},
{\bf 7}, 147--160.


\bibitem{Maruyama}
Maruyama G. (1970). Infinitely divisible processes.
{\em Theory of Probability and its Applications}, {\bf 15} (1), 3--23.

\bibitem{MaulikResnick} Maulik K.  and S. Resnick (2003).
Small and Large Time Scale Analysis of a Network
Traffic Model, {\em Queueing Systems}, {\bf 43}, 221--250.


\bibitem{mikosch}
Mikosch T., S. Resnick, H. Rootz\'en and A. Stegeman, (2002).
Is network traffic approximated by stable L\'evy
motion or fractional brownian motion?
{\em The Annals of Applied Probability,} {\bf 12} (1), 23--68.

\bibitem{MKSA} Mikosch T. and G. Samorodnitsky, (2007).
Scaling Limits for Cumulative Input Processes,
{\em Mathematics of Operations Research,}
{\bf 32} (4), 890-919.

\bibitem{Niculescu} Nicolescu P. and L. Persson (2006).
{\em Convex Functions and Their Applications: A Contemporary Approach.}
Springer.


\bibitem{Pipiras}
Pipiras V. and M. S. Taqqu (2000).
The limit of a renewal reward process with heavy-tailed rewards is
not a linear fractional stable motion. {\em Bernoulli,} {\bf 6}, 4, 2000, 607--614.

\bibitem{ResnickBerg}
Resnick S and E. van den Berg (2000).  Weak convergence of high-speed
network traffic models. {\em Journal of Applied Probability}, {\bf 37},2, 575-597.

\bibitem{ResnickSamorodnitsky}
Resnick, S. and G. Samorodnitsky. (2003).
Limits of ON/OFF hierarchical product models for data transmission
{\em The Annals of Applied Probability,} {\bf 13}, 4, 1355--1398.

\bibitem{Sato} Sato, K. (1999). \emph{L\'evy Processes and Infinitely
Divisible Distributions.} Cambridge University Press, Cambridge.

\bibitem{Shepp1} Shepp L.A. (1971). First passage time for a particular
Gaussian process. {\em The Annals of Mathematical Statistics}, {\bf 42,} (3)
946-951.

\bibitem{Slepian} Slepian D. (1961).
First Passage Time for a Particular Gaussian Process
{\em Annals of Mathematical Statistics,} {\bf 32,} (2) 610-612.

\bibitem{wolfe} Wolfe S.J. (1982). On a continuous analogue of the
stochastic difference equation $X_n = \rho X_{n-1} + B_n$. \emph{Stoch.
Proc. Appl.} \textbf{12}, 301--312.

\bibitem{WolpertTaqqu}
Wolpert R. L. and  M. S. Taqqu. (2005). Fractional Ornstein–Uhlenbeck
L\'evy processes and the Telecom process: Upstairs and downstairs,
{\em Signal Processing,} {\bf 85},  1523--1545.

\end{thebibliography}
\end{document}